\author{Samuele Giraudo\addressmark{1}}
\title{Algebraic and combinatorial structures on Baxter permutations}
\address{\addressmark{1}Institut Gaspard Monge, Université Paris-Est
    Marne-la-Vallée, 5 Boulevard Descartes, Champs-sur-Marne,
    77454 Marne-la-Vallée cedex 2, France
}
\keywords{Hopf algebras, Robinson-Schensted algorithm, quotient monoid, Baxter permutations}
\newtheorem{Theoreme}{Theorem}[section]
\newtheorem{Proposition}[Theoreme]{Proposition}
\newtheorem{Lemme}[Theoreme]{Lemma}
\newtheorem{Definition}[Theoreme]{Definition}
\newtheorem{Remarque}[Theoreme]{Remark}
\newcommand{\EnsPermu}{\mathfrak{S}}
\newcommand{\EnsRel}{\mathbb{Z}}
\newcommand{\EnsAB}{\mathcal{B}\mathcal{T}}
\newcommand{\EnsABJ}{\mathcal{T}\mathcal{B}\mathcal{T}}
\newcommand{\EnsPermuBX}{\mathfrak{S}^{\operatorname{B}}}
\newcommand{\EquivBX}{{\hspace{.2em} \equiv_{\operatorname{B}} \hspace{.2em}}}
\newcommand{\EquivS}{{\hspace{.2em} \equiv_{\operatorname{S}} \hspace{.2em}}}
\newcommand{\EquivCoS}{{\hspace{.2em} \equiv_{\operatorname{S}^\#} \hspace{.2em}}}
\newcommand{\AdjBXA}{{\hspace{.2em} \leftrightharpoons_{\operatorname{B}} \hspace{.2em}}}
\newcommand{\AdjBXB}{{\hspace{.2em} \rightleftharpoons_{\operatorname{B}} \hspace{.2em}}}
\newcommand{\AdjS}{{\hspace{.2em} \leftrightharpoons_{\operatorname{S}} \hspace{.2em}}}
\newcommand{\AdjCoS}{{\hspace{.2em} \leftrightharpoons_{\operatorname{S}^\#} \hspace{.2em}}}
\newcommand{\PSymb}{{\mathbb{P}}}
\newcommand{\QSymb}{{\mathbb{Q}}}
\newcommand{\FQSym}{{\bf FQSym}}
\newcommand{\Baxter}{{\bf Baxter}}
\newcommand{\PBT}{{\bf PBT}}
\newcommand{\Sym}{{\bf Sym}}
\newcommand{\FSym}{{\bf FSym}}
\newcommand{\Bell}{{\bf Bell}}
\newcommand{\F}{{\bf F}}
\newcommand{\PP}{{\bf P}}
\newcommand{\E}{{\bf E}}
\newcommand{\HH}{{\bf H}}
\newcommand{\Over}{\hspace{.1em} \diagup \hspace{.1em}}
\newcommand{\Under}{\hspace{.1em} \diagdown \hspace{.1em}}
\newcommand{\Prod}{\cdot}
\newcommand{\Gauche}{\prec}
\newcommand{\Droite}{\succ}
\newcommand{\DeltaG}{\Delta_\Gauche}
\newcommand{\DeltaD}{\Delta_\Droite}
\newcommand{\Tenseur}{\otimes}
\newcommand{\Std}{\operatorname{std}}
\newcommand{\ArbreVide}{\perp}
\newcommand{\Canop}{\operatorname{cnp}}
\newcommand{\Sloane}[1]{{\bf #1}}
\newcommand{\Eval}{\operatorname{eval}}
\newcommand{\OrdPermu}{\leq_{\operatorname{P}}}
\newcommand{\OrdTam}{\leq_{\operatorname{T}}}
\newcommand{\OrdBX}{\leq_{\operatorname{B}}}
\newcommand{\K}{\mathbb{K}}
\newcommand{\LettreA}{{\tt a}}
\newcommand{\LettreB}{{\tt b}}
\newcommand{\LettreC}{{\tt c}}
\newcommand{\LettreD}{{\tt d}}
\definecolor{Noir}{RGB}{0,0,0}
\definecolor{Rouge}{RGB}{205,35,38}
\definecolor{Bleu}{RGB}{2,60,195}
\definecolor{Vert}{RGB}{23,103,1}
\definecolor{Orange}{RGB}{255,113,15}
\definecolor{Blanc}{RGB}{255,255,255}
\tikzstyle{Noeud} = [circle, draw = Bleu!100, fill = Bleu!50, thick, inner sep = 0pt, minimum size = 10mm]
\tikzstyle{Feuille} = [rectangle, draw = Noir!100, fill = Noir!30, thick, inner sep = 0pt, minimum size = 2mm]
\tikzstyle{Arete} = [Rouge!80, thick, draw, line width = 2pt]
\tikzstyle{SArbre} = [rectangle, draw = Orange!100, fill = Orange!30, thick, inner sep = 0pt, minimum size = 8mm, font = \Huge]
\tikzstyle{Marque1} = [draw = Vert!100, fill = Vert!60]
\tikzstyle{Marque2} = [draw = Orange!100, fill = Orange!90]
\tikzstyle{EtiqClair} = [draw = Noir!100, fill = Blanc!100, font = \Huge]
\tikzstyle{EtiqFonce} = [draw = Bleu!100, fill = Bleu!20, font = \Huge]
\begin{document}

\maketitle

\begin{abstract}
    \paragraph{Abstract.}
    We give a new construction of a Hopf subalgebra of the Hopf algebra
    of Free quasi-symmetric functions whose bases are indexed by objects
    belonging to the Baxter combinatorial family (\emph{i.e.} Baxter permutations,
    pairs of twin binary trees, \emph{etc.}). This construction relies on
    the definition of the Baxter monoid, analog of the plactic monoid and
    the sylvester monoid, and on a Robinson-Schensted-like insertion algorithm.
    The algebraic properties of this Hopf algebra are studied. This Hopf
    algebra appeared for the first time in the work of Reading [Lattice
    congruences, fans and Hopf algebras, {\it Journal of Combinatorial
    Theory Series A}, 110:237--273, 2005].

    \paragraph{Résumé.}
    Nous proposons une nouvelle construction d'une sous-algèbre de Hopf
    de l'algèbre de Hopf des fonctions quasi-symétriques libres dont
    les bases sont indexées par les objets de la famille combinatoire
    de Baxter (\emph{i.e.} permutations de Baxter, couples d'arbres binaires
    jumeaux, \emph{etc.}). Cette construction repose sur la définition
    du monoïde de Baxter, analogue du monoïde plaxique et du monoïde
    sylvestre, et d'un algorithme d'insertion analogue à l'algorithme de
    Robinson-Schensted. Les propriétés algébriques de cette algèbre de Hopf
    sont étudiées. Cette algèbre de Hopf est apparue pour la première fois
    dans le travail de Reading [Lattice congruences, fans  and Hopf algebras,
    {\it Journal of Combinatorial Theory Series A}, 110:237--273, 2005].
\end{abstract}

\section{Introduction} \label{sec:Intro}
In the recent years, many combinatorial Hopf algebras, whose bases are
indexed by combinatorial objects, have been intensively studied. For example,
the Malvenuto-Reutenauer Hopf algebra $\FQSym$ of Free quasi-symmetric
functions~\cite{MR95, DHT02} has bases indexed by permutations. This
Hopf algebra admits several Hopf subalgebras: The Hopf algebra of Free
symmetric functions $\FSym$ ~\cite{PR95, DHT02}, whose bases are indexed
by standard Young tableaux, the Hopf algebra $\Bell$~\cite{R07} whose bases
are indexed by set partitions, the Loday-Ronco Hopf algebra $\PBT$~\cite{LR98, HNT05-2}
whose bases are indexed by planar binary trees and the Hopf algebra $\Sym$ of
non-commutative symmetric functions~\cite{GKDLLRT94} whose bases are indexed
by integer compositions. An unifying approach to construct all these structures
relies on a definition of a congruence on words leading to the definition
of monoids on combinatorial objects. Indeed, $\FSym$ is directly obtained
from the plactic monoid~\cite{LS81}, $\Bell$ from the Bell monoid~\cite{R07},
$\PBT$ from the sylvester monoid~\cite{HNT02-2, HNT05-2}, and $\Sym$ from
the hypoplactic monoid~\cite{N98}. The richness of these constructions relies
on the fact that, in addition to construct Hopf algebras, the definition
of such monoids often brings partial orders, combinatorial algorithms and
Robinson-Schensted-like algorithms, of independent interest.

In this paper, we propose to enrich this collection of Hopf algebras by
providing a construction of a Hopf algebra whose bases are indexed by
objects belonging to the Baxter combinatorial family. This combinatorial
family admits various representations as Baxter permutations~\cite{Bax64},
pairs of twin binary trees~\cite{DG94}, quadrangulations~\cite{ABP04}, plane
bipolar orientations~\cite{BBF10}, \emph{etc}. In~\cite{R05}, Reading defines
first a Hopf algebra on Baxter permutations in the context of lattice congruences;
Moreover, very recently, Law and Reading~\cite{LR10} have studied and detailed
their construction of this Hopf algebra. However, even if both points of
view lead to the same general theory, their paths are different and provide
different ways of understanding this Hopf algebra, one centered, as in Law and
Reading's work, on lattice theory, the other, as in our work, centered on
combinatorics on words. Moreover, a large part of the results of each paper
does not appear in the other.

We begin by recalling in Section~\ref{sec:Prelim} the preliminary notions
used thereafter. In Section~\ref{sec:MonoideBaxter}, we define the Baxter
congruence. This congruence allows to define a quotient of the free monoid,
the Baxter monoid, which has a number of properties required for the Hopf
algebraic construction which follows. We show that the Baxter monoid is
intimately linked to the sylvester monoid. Next, in Section~\ref{sec:RobinsonSchensted},
we develop a Robinson-Schensted-like insertion algorithm that allows to
decide if two words are equivalent according to the Baxter congruence.
Given a word, this algorithm computes a pair of twin binary trees. Section~\ref{sec:TreillisBaxter}
is devoted to the study of some properties of the equivalence classes of
permutations under the Baxter congruence. This leads to the definition of
a lattice structure on pairs of twin binary trees. Finally, in Section~\ref{sec:AlgebreHopfBaxter},
we define the Hopf algebra $\Baxter$ and study it. Using the order structure
on pairs of twin binary trees, we provide multiplicative bases and show
that $\Baxter$ is free as an algebra. Using the results of Foissy on
bidendriform bialgebras~\cite{F05}, we show that $\Baxter$ is also self-dual
and that the Lie algebra of its primitive elements is free.

\acknowledgements
The author would like to thank Florent Hivert and Jean-Christophe Novelli
for their advice and help during all stages of the preparation of this paper.
The computations of this work have been done with the open-source mathematical
software Sage~\cite{SAGE}.

\section{Preliminaries} \label{sec:Prelim}

\subsection{Words}
In the sequel, $A := \{a_1 < a_2 < \ldots\}$ is a totally ordered infinite
alphabet and $A^*$ is the free monoid spanned by $A$. Let $u \in A^*$.
For $S \subseteq A$, we denote by $u_{|S}$ the \emph{restriction} of $u$
on the alphabet $S$, that is the longest subword of $u$ made of letters of $S$.
The \emph{evaluation} $\Eval(u)$ of the word $u$ is the non-negative integer
vector such that its $i$-th entry is the number of occurrences of the letter
$a_i$ in $u$. Let $\max(u)$ be the maximal letter of $u$. The \emph{Schützenberger transformation}
$\#$ is defined by $u^\# := \max(u)\! +\! 1\! -\! u_{|u|} \ldots \max(u)\! +\! 1\! -\! u_1$;
For example, $(a_5 a_3 a_1 a_1 a_5 a_2)^\# = a_4 a_1 a_5 a_5 a_3 a_1$. Note
that it is an involution if $u$ has an occurrence of $a_1$. Let $v \in A^*$
and $\LettreA, \LettreB \in A$. The \emph{shuffle product} $\shuffle$ is
defined on $\EnsRel \langle A \rangle$ recursively by $u \shuffle \epsilon := \epsilon \shuffle u := u$ and
$\LettreA u \shuffle \LettreB v := \LettreA (u \shuffle \LettreB v) + \LettreB (\LettreA u \shuffle v)$.

\subsection{Permutations}
Denote by $\EnsPermu_n$ the set of permutations of size $n$ and
$\EnsPermu := \cup_{n \geq 0} \EnsPermu_n$. We shall call $(i, j)$ a
\emph{co-inversion} of $\sigma \in \EnsPermu$ if $i < j$ and $\sigma^{-1}_i > \sigma^{-1}_j$.
Let us recall that the \emph{(right) permutohedron order} is the partial
order $\OrdPermu$ defined on $\EnsPermu_n$ where $\sigma$ is covered by
$\nu$ if $\sigma = u \LettreA \LettreB v$ and $\nu = u \LettreB \LettreA v$
where $\LettreA < \LettreB$. Let $\sigma, \nu \in \EnsPermu$. The permutation
$\sigma \Over \nu$ is obtained by concatenating $\sigma$ and the letters of $\nu$
incremented by $|\sigma|$; In the same way, the permutation $\sigma \Under \nu$ is
obtained by concatenating the letters of $\nu$ incremented by $|\sigma|$ and
$\sigma$; For example, ${\bf 312} \Over 2314 = {\bf 312} 5647$ and
${\bf 312} \Under 2314 = 5647 {\bf 312}$. The permutation $\sigma$ is
\emph{connected} if $\sigma = \nu \Over \pi$ implies $\nu = \sigma$ or
$\pi = \sigma$. The \emph{shifted shuffle product} $\cshuffle$ of two permutations is
defined by $\sigma \cshuffle \nu := \sigma \shuffle (\nu_1\! +\! |\sigma| \ldots \nu_{|\nu|}\! +\! |\sigma|)$;
For example, ${\bf 12} \cshuffle 21 = {\bf 12} \shuffle 43 = {\bf 12} 43 + {\bf 1} 4 {\bf 2} 3 +
{\bf 1} 43 {\bf 2} + 4 {\bf 12} 3 + 4 {\bf 1} 3 {\bf 2} + 43 {\bf 12}$.
The \emph{standardized word} $\Std(u)$ of $u \in A^*$ is the unique permutation
$\sigma$ satisfying $\sigma_i < \sigma_j$ iff $u_i \leq u_j$ for all
$1 \leq i < j \leq |u|$; For example, $\Std(a_3 a_1 a_4 a_2 a_5 a_7 a_4 a_2 a_3) = 416289735$.

\subsection{Binary trees}
Denote by $\EnsAB_n$ the set of binary trees with $n$ internal nodes and
$\EnsAB := \cup_{n \geq 0} \EnsAB_n$. We use in the sequel the standard
terminology (\emph{i.e.}, \emph{child}, \emph{ancestor}, \ldots)
about binary trees~\cite{AU93}. The only element of $\EnsAB_0$
is the \emph{leaf} or \emph{empty tree}, denoted by $\ArbreVide$. Let us
recall that the \emph{Tamari order}~\cite{K06} is the partial order $\OrdTam$
defined on $\EnsAB_n$ where $T_0 \in \EnsAB_n$ is covered by $T_1 \in \EnsAB_n$
if it is possible to transform $T_0$ into $T_1$ by performing a right rotation
(see Figure~\ref{fig:Rotation}).
\begin{figure}[ht]
    \centering
    \scalebox{.25}{
        \begin{tikzpicture}
            \node[Noeud, EtiqClair, minimum size = 12 mm] (racine) at (0, 0) {};
            \node (g) at (-3, -1) {};
            \node (d) at (3, -1) {};
            \node[Noeud, EtiqClair, minimum size = 12 mm] (r) at (0, -2) {$y$};
            \node[Noeud, EtiqClair, minimum size = 12 mm] (q) at (-2, -4) {$x$};
            \node[SArbre, minimum size = 16 mm] (A) at (-4, -6) {\Huge $A$};
            \node[SArbre, minimum size = 16 mm] (B) at (0, -6) {\Huge $B$};
            \node[SArbre, minimum size = 16 mm] (C) at (2, -4) {\Huge $C$};
            \draw[Arete] (racine) -- (g);
            \draw[Arete] (racine) -- (d);
            \draw[Arete, decorate, decoration = zigzag] (racine) -- (r);
            \draw[Arete] (r) -- (q);
            \draw[Arete] (r) -- (C);
            \draw[Arete] (q) -- (A);
            \draw[Arete] (q) -- (B);
            \node at (-4, -3) {\scalebox{3}{$T_0 = $}};
            \draw[line width=3pt, ->] (4, -3) -- (6, -3);
            \node[Noeud, EtiqClair, minimum size = 12 mm] (racine') at (10, 0) {};
            \node (g') at (7, -1) {};
            \node (d') at (13, -1) {};
            \node[Noeud, EtiqClair, minimum size = 12 mm] (r') at (12, -4) {$y$};
            \node[Noeud, EtiqClair, minimum size = 12 mm] (q') at (10, -2) {$x$};
            \node[SArbre, minimum size = 16 mm] (A') at (8, -4) {\Huge $A$};
            \node[SArbre, minimum size = 16 mm] (B') at (10, -6) {\Huge $B$};
            \node[SArbre, minimum size = 16 mm] (C') at (14, -6) {\Huge $C$};
            \draw[Arete] (racine') -- (g');
            \draw[Arete] (racine') -- (d');
            \draw[Arete, decorate, decoration = zigzag] (racine') -- (q');
            \draw[Arete] (q') -- (r');
            \draw[Arete] (q') -- (A');
            \draw[Arete] (r') -- (B');
            \draw[Arete] (r') -- (C');
            \node at (14, -3) {\scalebox{3}{$= T_1$}};
        \end{tikzpicture}
    }
    \caption{The right rotation of root $y$.}
    \label{fig:Rotation}
\end{figure}

Let $T_0, T_1 \in \EnsAB$. The binary tree $T_0 \Over T_1$ is obtained by
grafting $T_0$ from its root on the leftmost leaf of $T_1$; In the same way,
the binary tree $T_0 \Under T_1$ is obtained by grafting $T_1$ from its root
on the rightmost leaf of $T_0$. The \emph{canopy} (see~\cite{LR98} and~\cite{V04})
$\Canop(T)$ of $T \in \EnsAB$ is the word on the alphabet $\{0, 1\}$ obtained
by browsing the leaves of $T$ from left to right except the first and the
last one, writing $0$ if the considered leaf is oriented to the right, $1$
otherwise (see Figure~\ref{fig:ExempleCanopee}). Note that the orientation
of the leaves in a binary tree is determined only by its nodes so that we
can omit to draw the leaves in our next graphical representations.
\begin{figure}[ht]
    \centering
    \scalebox{.20}{
        \begin{tikzpicture}
            \node[Feuille](0)at(0,-3){};
            \node[Noeud](1)at(1,-2){};
            \node[Feuille](2)at(2,-3){};
            \node[] (2') [below of = 2] {\scalebox{3}{$0$}};
            \draw[Arete](1)--(0);
            \draw[Arete](1)--(2);
            \node[Noeud](3)at(3,-1){};
            \node[Feuille](4)at(4,-4){};
            \node[] (4') [below of = 4] {\scalebox{3}{$1$}};
            \node[Noeud](5)at(5,-3){};
            \node[Feuille](6)at(6,-4){};
            \node[] (6') [below of = 6] {\scalebox{3}{$0$}};
            \draw[Arete](5)--(4);
            \draw[Arete](5)--(6);
            \node[Noeud](7)at(7,-2){};
            \node[Feuille](8)at(8,-3){};
            \node[] (8') [below of = 8] {\scalebox{3}{$0$}};
            \draw[Arete](7)--(5);
            \draw[Arete](7)--(8);
            \draw[Arete](3)--(1);
            \draw[Arete](3)--(7);
            \node[Noeud](9)at(9,0){};
            \node[Feuille](10)at(10,-3){};
            \node[] (10') [below of = 10] {\scalebox{3}{$1$}};
            \node[Noeud](11)at(11,-2){};
            \node[Feuille](12)at(12,-3){};
            \node[] (12') [below of = 12] {\scalebox{3}{$0$}};
            \draw[Arete](11)--(10);
            \draw[Arete](11)--(12);
            \node[Noeud](13)at(13,-1){};
            \node[Feuille](14)at(14,-3){};
            \node[] (14') [below of = 14] {\scalebox{3}{$1$}};
            \node[Noeud](15)at(15,-2){};
            \node[Feuille](16)at(16,-3){};
            \draw[Arete](15)--(14);
            \draw[Arete](15)--(16);
            \draw[Arete](13)--(11);
            \draw[Arete](13)--(15);
            \draw[Arete](9)--(3);
            \draw[Arete](9)--(13);
        \end{tikzpicture}
    }
    \caption{The canopy of this binary tree is $0100101$.}
    \label{fig:ExempleCanopee}
\end{figure}

An $A$-labeled binary tree $T$ is a \emph{left} (resp. \emph{right})
\emph{binary search tree} if for any node $x$ labeled by $\LettreB$, each
label $\LettreA$ of a node in the left subtree of $x$ and each label $\LettreC$
of a node in the right subtree of $x$, the inequality $\LettreA < \LettreB \leq \LettreC$
(resp. $\LettreA \leq \LettreB < \LettreC$) holds. A binary tree $T \in \EnsAB_n$
is a \emph{decreasing binary tree} if it is bijectively labeled on
$\{1, \ldots, n\}$ and, for all node $y$ of $T$, if $x$ is a child of $y$,
then the label of $x$ is smaller than the label of $y$. The \emph{shape}
of a labeled binary tree is the unlabeled binary tree obtained by forgetting
its labels.

\subsection{Baxter permutations and pairs of twin binary trees}
A permutation $\sigma$ is a \emph{Baxter permutation} if for any subword
$u = u_1 u_2 u_3 u_4$ of $\sigma$ such that the letters $u_2$ and $u_3$
are adjacent in $\sigma$, $\Std(u) \notin \{2413, 3142\}$. In other words,
$\sigma$ is a Baxter permutation if it avoids the \emph{generalized permutation patterns}
$2-41-3$ and $3-14-2$ (see~\cite{BS00} for an introduction on generalized permutation
patterns). For example, ${\bf 4}21{\bf 73}8{\bf 5}6$ is not a Baxter permutation;
On the other hand $436975128$ is a Baxter permutation. Let us denote by $\EnsPermuBX_n$
the set of Baxter permutations of size $n$ and $\EnsPermuBX := \cup_{n \geq 0} \EnsPermuBX_n$.

A \emph{pair of twin binary trees} $(T_L, T_R)$ is made of two binary trees
$T_L, T_R \in \EnsAB_n$ such that the canopies of $T_L$ and $T_R$ are complementary,
that is $\Canop(T_L)_i \ne \Canop(T_R)_i$ for all $1 \leq i \leq n - 1$.
Denote by $\EnsABJ_n$ the set of pairs of twin binary trees where each binary
tree has $n$ nodes and $\EnsABJ := \cup_{n \geq 0} \EnsABJ_n$. In~\cite{DG94},
Dulucq and Guibert have highlighted a bijection between Baxter permutations
and pairs of twin binary trees. In the sequel, we shall make use of a
very similar bijection.

\section{The Baxter monoid} \label{sec:MonoideBaxter}

\subsection{Definition and first properties}
Recall that an equivalence relation $\equiv$ defined on $A^*$ is a \emph{congruence}
if for all $u, u', v, v' \in A^*$, $u \equiv u'$ and $v \equiv v'$  imply
$u . v \equiv u' . v'$.

\begin{Definition} \label{def:MonoideBaxter}
    The \emph{Baxter monoid} is the quotient of the free monoid $A^*$ by
    the congruence $\EquivBX$ that is the transitive closure of the \emph{adjacency relations}
    $\AdjBXA$ and $\AdjBXB$ defined for $u, v\in A^*$ and $\LettreA, \LettreB, \LettreC, \LettreD \in A$~by:
    \begin{align}
        \LettreC u \LettreA \LettreD v \LettreB & \AdjBXA \LettreC u \LettreD \LettreA v \LettreB
                \hspace{4em} \text{where \hspace{1em} $\LettreA \leq \LettreB < \LettreC \leq \LettreD$,} \label{eq:EquivBXAdj1} \\
        \LettreB u \LettreD \LettreA v \LettreC & \AdjBXB \LettreB u \LettreA \LettreD v \LettreC
                \hspace{4em} \text{where \hspace{1em} $\LettreA < \LettreB \leq \LettreC < \LettreD$.} \label{eq:EquivBXAdj2}
    \end{align}
\end{Definition}

For $u \in A^*$, denote by $\widehat{u}$ the $\EquivBX$-equivalence class
of $u$; For example, the $\EquivBX$-equivalence class of $5273641$ is
$\{5237641, 5273641, 5276341, 5723641, 5726341, 5762341\}$.

An equivalence relation $\equiv$ defined on $A^*$ is \emph{compatible with
the restriction of alphabet intervals} if for all interval $I$ of $A$ and
for all $u, v \in A^*$, $u \equiv v$ implies $u_{|I} \equiv v_{|I}$.

\begin{Proposition} \label{prop:CompRestrSegmAlph}
    The Baxter monoid is compatible with the restriction of alphabet intervals.
\end{Proposition}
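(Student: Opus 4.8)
The plan is to show that the defining adjacency relations are themselves compatible with restriction to an alphabet interval, and then lift this to the full congruence by transitivity. Since $\EquivBX$ is the transitive closure of $\AdjBXA$ and $\AdjBXB$, and since restriction $u \mapsto u_{|I}$ is compatible with concatenation, it suffices to prove the following local statement: for every interval $I$ of $A$ and every single adjacency move $u \AdjBXA v$ (and symmetrically $u \AdjBXB v$), we have $u_{|I} \EquivBX v_{|I}$. Once this is established, given any $u \EquivBX v$ we take a chain $u = w_0, w_1, \ldots, w_k = v$ of adjacency moves, restrict each term, and concatenate the resulting equivalences to get $u_{|I} \EquivBX v_{|I}$.

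First I would fix an interval $I$ and analyze the rewriting rule~\eqref{eq:EquivBXAdj1}, which sends $\LettreC u \LettreA \LettreD v \LettreB$ to $\LettreC u \LettreD \LettreA v \LettreB$ with $\LettreA \leq \LettreB < \LettreC \leq \LettreD$. The move only swaps the two adjacent middle letters $\LettreA$ and $\LettreD$, leaving the factors $\LettreC$, $u$, $v$, $\LettreB$ in place. Applying restriction $(\cdot)_{|I}$ deletes exactly the letters not lying in $I$ from both words simultaneously and in the same positions. I would then split into cases according to which of the four distinguished letters $\LettreA, \LettreB, \LettreC, \LettreD$ survive the restriction. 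The key observation is that the outer letters $\LettreC$ and $\LettreB$ act as the ``context'' witnessing the pattern: if both $\LettreA$ and $\LettreD$ survive, then because $\LettreB < \LettreC$ forces $\LettreB$ and $\LettreC$ to survive too whenever they lie between the relevant values — more precisely, since $I$ is an \emph{interval} and $\LettreA \leq \LettreB < \LettreC \leq \LettreD$, any interval containing both $\LettreA$ and $\LettreD$ must also contain $\LettreB$ and $\LettreC$. Thus when the swapped pair $(\LettreA,\LettreD)$ survives, the witnesses $\LettreC,\LettreB$ survive as well, and $u_{|I}, v_{|I}$ differ precisely by another instance of the same rewriting rule~\eqref{eq:EquivBXAdj1}, giving $u_{|I} \EquivBX v_{|I}$. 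If at least one of $\LettreA, \LettreD$ is deleted, then the two restricted words are literally equal, since the only positional difference between $u$ and $v$ is the order of $\LettreA$ and $\LettreD$.

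The same argument applies verbatim to~\eqref{eq:EquivBXAdj2} after observing that the condition $\LettreA < \LettreB \leq \LettreC < \LettreD$ again forces: any interval $I$ containing both swapped letters $\LettreA$ and $\LettreD$ also contains the witnesses $\LettreB$ and $\LettreC$. Hence the restricted words either coincide or differ by one application of rule~\eqref{eq:EquivBXAdj2}.

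The main obstacle — and the crux of the proof — is precisely the interval hypothesis on $I$: it is what guarantees that the witnessing letters cannot be deleted while the swapped pair is retained. Without it (say for an arbitrary subalphabet $S$ that drops $\LettreB$ or $\LettreC$ but keeps $\LettreA$ and $\LettreD$), the restricted pair $u_{|S}, v_{|S}$ would exhibit a bare transposition of two adjacent letters with no surviving context, and that is in general \emph{not} a valid Baxter move. So the careful bookkeeping of which of $\LettreA, \LettreB, \LettreC, \LettreD$ survive, together with the inequalities bundling them inside any interval, is where the real content lies; the transitivity and concatenation-compatibility steps are routine.
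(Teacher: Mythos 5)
Your proof is correct and follows exactly the route the paper takes: the paper's proof consists of the single remark that it suffices to check the property on the adjacency relations, and your case analysis (the interval hypothesis forcing the witnesses $\LettreB,\LettreC$ to survive whenever the swapped pair $\LettreA,\LettreD$ does) is precisely that check carried out in full.
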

\begin{proof}
    We only have to check the property on adjacency relations.
\end{proof}

An equivalence relation $\equiv$ defined on $A^*$ is \emph{compatible
with the destandardization process} if for all $u, v \in A^*$,
$u \equiv v$ iff $\Std(u) \equiv \Std(v)$ and $\Eval(u) = \Eval(v)$.

\begin{Proposition} \label{prop:CompDestd}
    The Baxter monoid is compatible with the destandardization process.
\end{Proposition}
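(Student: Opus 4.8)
The plan is to prove both implications of the stated equivalence, observing first that the evaluation condition is automatic in the forward direction: each of the adjacency relations $\AdjBXA$ and $\AdjBXB$ merely transposes two adjacent letters, hence preserves the multiset of letters, so $u \EquivBX v$ already forces $\Eval(u) = \Eval(v)$. The substance therefore lies in relating $\EquivBX$ on words with $\EquivBX$ on their standardizations.

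For the forward implication I would first establish the local fact that standardization commutes with a single adjacency step. Both rules transpose two \emph{adjacent} letters $\LettreA$ and $\LettreD$ with $\LettreA < \LettreD$ strictly (since $\LettreA \le \LettreB < \LettreC \le \LettreD$ in~\eqref{eq:EquivBXAdj1} and $\LettreA < \LettreB \le \LettreC < \LettreD$ in~\eqref{eq:EquivBXAdj2}). A direct count, using that $\Std$ ranks equal letters from left to right, shows that transposing two adjacent positions carrying \emph{distinct} letters changes $\Std$ only at those two positions, where the two corresponding (necessarily adjacent) standardized values get exchanged, every other standardized value being unaffected. It then remains to verify that the bounding witnesses $\LettreC, \LettreB$ (resp.\ $\LettreB, \LettreC$) keep, after standardization, the inequalities required by the rule. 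Because $\Std$ is strictly monotone on distinct letters and breaks ties by position, the chain $\LettreA \le \LettreB < \LettreC \le \LettreD$ lifts to the analogous chain among the standardized values, the left witness staying to the left of the transposed pair and the right witness to its right. Hence $u \AdjBXA u'$ implies $\Std(u) \AdjBXA \Std(u')$ and similarly for $\AdjBXB$, and transitivity yields $\Std(u) \EquivBX \Std(v)$.

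For the backward implication, assume $\Std(u) \EquivBX \Std(v)$ and $\Eval(u) = \Eval(v) =: c$, and fix a chain of adjacency steps from $\Std(u)$ to $\Std(v)$. The idea is to lift this chain step by step to a chain of word steps among words of evaluation $c$, maintaining the invariant that the current permutation is $\Std(w)$ for a (unique) word $w$ of evaluation $c$. Given such a $w$, a step acts at two adjacent positions $m, m+1$; if $w_m \ne w_{m+1}$ it destandardizes to a genuine word step of the same type, the witnesses being recovered exactly by running the forward computation backwards. The only possible obstruction is a step with $w_m = w_{m+1}$: here I would note that equal adjacent letters receive \emph{consecutive} standardized values in increasing order, so $\Std(w)$ exhibits an ascent of two consecutive values at $m, m+1$. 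No $\AdjBXA$ step can act there, because applied to two consecutive values its requirement $\LettreA \le \LettreB < \LettreC \le \LettreD$ forces the witnesses $\LettreB, \LettreC$ to coincide with the two transposed values themselves, hence to sit on the transposed positions, where they cannot play the role of the external left and right witnesses; and no $\AdjBXB$ step can act there, since it requires a descent. Thus this case never occurs, every step lifts, and the invariant is preserved. The terminal word has standardization $\Std(v)$ and evaluation $c = \Eval(v)$, hence equals $v$, giving $u \EquivBX v$.

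I expect the backward lifting to be the main obstacle: the danger is that the chosen chain might leave the set of standardizations of evaluation-$c$ words, so that a step fails to destandardize. The consecutive-values observation is precisely what excludes this, and is the crux of the argument; the forward direction and the evaluation bookkeeping become routine once the local standardization computation is in place.
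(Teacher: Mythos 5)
The paper, being an extended abstract, prints no proof of this proposition; the expected argument (as for Proposition~\ref{prop:CompRestrSegmAlph}) is to check everything on single adjacency steps, which is exactly your route, and your two-way argument --- standardization commutes with an adjacency step, and a chain of steps on $\Std(u)$ lifts back to words of the prescribed evaluation --- is correct. One slip worth fixing: in the backward direction you dismiss an $\AdjBXB$ step at the ascent created by $w_m = w_{m+1}$ ``since it requires a descent'', but the adjacency relations generate a symmetric equivalence, and $\AdjBXB$ read from right to left does act on an ascent $\LettreA \LettreD$; the sub-case is nevertheless excluded by the very same counting you use for $\AdjBXA$, since the four witness positions are pairwise distinct and hence carry four distinct values $a' < b' < c' < d'$ in the permutation, which is impossible when $d' = a' + 1$. (In fact the obstruction can be bypassed entirely: whenever a step applies to $\Std(w)$, destandardizing its witnesses already yields $\LettreA \leq \LettreB < \LettreC \leq \LettreD$, which forces $w_m < w_{m+1}$.) Finally, the parenthetical claim that the two exchanged standardized values are ``necessarily adjacent'' is false if read as ``consecutive integers'' --- only their positions are adjacent --- but nothing in the argument depends on it.
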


An equivalence relation $\equiv$ defined on $A^*$ is \emph{compatible with
the Schützenberger involution} if for all $u, v \in A^*$, $u \equiv v$
implies $u^\# \equiv v^\#$.

\begin{Proposition} \label{prop:CompSchutz}
    The Baxter monoid is compatible with the Schützenberger involution.
\end{Proposition}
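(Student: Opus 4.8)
The plan is to show that the Baxter congruence $\EquivBX$ is compatible with the Schützenberger involution $\#$. Since $\EquivBX$ is defined as the transitive closure of the adjacency relations $\AdjBXA$ and $\AdjBXB$, and since $\#$ is a map on words, it suffices to verify the property on a single adjacency step: if $u \AdjBXA v$ then $u^\# \equiv_{\operatorname{B}} v^\#$, and likewise for $\AdjBXB$. Indeed, once the single-step statement holds, an arbitrary chain $u = w_0 \equiv_{\operatorname{B}} w_1 \equiv_{\operatorname{B}} \cdots \equiv_{\operatorname{B}} w_k = v$ maps under $\#$ to a chain $u^\# \equiv_{\operatorname{B}} w_1^\# \equiv_{\operatorname{B}} \cdots \equiv_{\operatorname{B}} v^\#$, so $u^\# \equiv_{\operatorname{B}} v^\#$ by transitivity.

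First I would record how $\#$ acts: for a word $w$ with maximal letter $m := \max(w)$, one has $w^\#$ obtained by reversing $w$ and replacing each letter $x$ by $m+1-x$. Two features matter. Reversal turns a left-to-right pattern into the mirrored right-to-left pattern, and the replacement $x \mapsto m+1-x$ is an order-reversing bijection on the letters appearing in $w$; in particular it sends the relation $\LettreA \leq \LettreB < \LettreC \leq \LettreD$ to $m+1-\LettreD \leq m+1-\LettreC < m+1-\LettreB \leq m+1-\LettreA$, which is again an inequality of the same shape. The key observation is that the two defining rewriting rules are designed to be exchanged (up to this mirroring) by $\#$: the rule $\AdjBXA$, governed by $\LettreA \leq \LettreB < \LettreC \leq \LettreD$, should transform under $\#$ into an instance of the rule $\AdjBXB$, governed by $\LettreA < \LettreB \leq \LettreC < \LettreD$, and vice versa.

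Concretely, I would take a generic instance $\LettreC u \LettreA \LettreD v \LettreB \AdjBXA \LettreC u \LettreD \LettreA v \LettreB$ with $\LettreA \leq \LettreB < \LettreC \leq \LettreD$ and apply $\#$ to both sides. Writing $m$ for the common maximal letter (note the two words have the same content, hence the same $\max$), reversing and complementing the left-hand word $\LettreC u \LettreA \LettreD v \LettreB$ yields a word of the form $\LettreB' (v^{\#\text{-ish}}) \LettreD' \LettreA' (u^{\#\text{-ish}}) \LettreC'$ where each primed letter is $m+1-(\cdot)$; the two letters that were swapped, namely $\LettreA$ and $\LettreD$, become adjacent-across-$v^\#$ complemented letters $\LettreD'$ and $\LettreA'$, and the surrounding complemented $\LettreC', \LettreB'$ play the roles of the outer constraints. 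Checking that $m+1-\LettreA, m+1-\LettreB, m+1-\LettreC, m+1-\LettreD$ satisfy the strict/nonstrict inequalities required by $\AdjBXB$ then shows that the two $\#$-images differ by exactly one application of the other adjacency relation, giving $u^\# \equiv_{\operatorname{B}} v^\#$ in one step. The symmetric computation starting from an $\AdjBXB$ instance produces an $\AdjBXA$ instance.

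The main obstacle is purely bookkeeping: keeping straight the position reversal induced by $\#$ (so that the roles of the outer letters $\LettreB$ and $\LettreC$, and of the inner swapped letters, are interchanged) while simultaneously tracking that the order-reversing complementation converts the constraint $\LettreA \leq \LettreB < \LettreC \leq \LettreD$ into exactly $m+1-\LettreD < m+1-\LettreC \leq m+1-\LettreB < m+1-\LettreA$ of the correct $\AdjBXB$ type, with the strict and non-strict inequalities landing in the right places. There is no genuine difficulty beyond this careful matching, and the argument is entirely mechanical once the action of $\#$ on a single adjacency rule is written out; hence, as with Proposition~\ref{prop:CompRestrSegmAlph}, the verification reduces to inspecting the generators of the congruence.
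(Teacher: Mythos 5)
Your overall strategy is the right one and is the intended one: since $\EquivBX$ is generated by the adjacency relations, it suffices to check that $\#$ sends each generating pair to a pair related by a single adjacency, the equality $\max(u)=\max(v)$ being guaranteed by equality of evaluations. However, your central claim --- that $\#$ exchanges the two rules, turning an instance of $\AdjBXA$ into an instance of $\AdjBXB$ and vice versa --- is false, and the verification you describe (``checking that $m+1-\LettreA,\dots$ satisfy the inequalities required by $\AdjBXB$'') would not go through. Applying $\#$ to $\LettreC u \LettreA \LettreD v \LettreB$ and $\LettreC u \LettreD \LettreA v \LettreB$ yields $\LettreB' \tilde v \LettreD' \LettreA' \tilde u \LettreC'$ and $\LettreB' \tilde v \LettreA' \LettreD' \tilde u \LettreC'$, where $x' := m+1-x$ and $\tilde u$, $\tilde v$ are reverse-complements. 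The constraint $\LettreA \leq \LettreB < \LettreC \leq \LettreD$ becomes $\LettreD' \leq \LettreC' < \LettreB' \leq \LettreA'$ --- as you yourself note in your second paragraph, an inequality of the \emph{same} shape (weak, strict, weak). In your final paragraph you silently upgrade the outer inequalities to strict ones to force the $\AdjBXB$ shape; complementation preserves strictness, so this is wrong. Moreover the image pair cannot be an instance of $\AdjBXB$ at all: its first letter $\LettreB'$ is strictly greater than its last letter $\LettreC'$, whereas $\AdjBXB$ requires the first outer letter to be $\leq$ the last one. A concrete counterexample: $3142 \AdjBXA 3412$ (take $u,v$ empty, $\LettreA\LettreB\LettreC\LettreD = 1234$), and both words are fixed by $\#$, yet $\{3142, 3412\}$ is not an instance of $\AdjBXB$.

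The correct statement is that $\#$ maps each adjacency relation to \emph{itself}: with the identification $\LettreA \mapsto \LettreD'$, $\LettreB \mapsto \LettreC'$, $\LettreC \mapsto \LettreB'$, $\LettreD \mapsto \LettreA'$, the image pair above is exactly an instance of $\AdjBXA$ again (first letter strictly larger than last, inner small letter weakly below the last letter, inner large letter weakly above the first letter), and symmetrically $\AdjBXB$-instances map to $\AdjBXB$-instances. This contrasts with the sylvester case recalled in the paper, where $\#$ genuinely trades $\AdjS$ for $\AdjCoS$; the Baxter rules are already a $\#$-stable pair, each one individually invariant. The repair is a one-line change --- match the image against $\AdjBXA$ rather than $\AdjBXB$ --- after which your reduction to generators completes the proof.
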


\subsection{Connection with the sylvester monoid}
The \emph{sylvester monoid}~\cite{HNT02-2, HNT05-2} is the quotient of the
free monoid $A^*$ by the congruence $\EquivS$ that is the transitive
closure of the adjacency relation $\AdjS$ defined for $u \in A^*$ and
$\LettreA, \LettreB, \LettreC \in A$ by:
\begin{equation}
    \LettreA \LettreC u \LettreB \AdjS \LettreC \LettreA u \LettreB
            \hspace{4em} \text{where \hspace{1em} $\LettreA \leq \LettreB < \LettreC$.}
\end{equation}
In the same way, let us define the \emph{$\#$-sylvester monoid} by the congruence
$\EquivCoS$ that is the transitive closure of the adjacency relation $\AdjCoS$
defined for $u\in A^*$ and $\LettreA, \LettreB, \LettreC \in A$ by:
\begin{equation}
    \LettreB u \LettreA \LettreC \AdjCoS \LettreB u \LettreC \LettreA
            \hspace{4em} \text{where \hspace{1em} $\LettreA < \LettreB \leq \LettreC$.}
\end{equation}
Note that this adjacency relation is defined by taking the images by the
Schützenberger involution of the sylvester adjacency relation. Indeed,
for all $u, v \in A^*$, $u \EquivCoS v$ iff $u^\# \EquivS v^\#$. The Baxter
monoid and the sylvester monoid are related in the following way:
\begin{Proposition} \label{prop:LienSylv}
    Let $u, v \in A^*$. Then, $u \EquivBX v$ iff $u \EquivS v$ and $u \EquivCoS v$.
\end{Proposition}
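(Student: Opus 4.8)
The plan is to prove the two inclusions of the claimed equivalence separately: the easy direction $u \EquivBX v \Rightarrow (u \EquivS v \text{ and } u \EquivCoS v)$ by checking it on the generators of $\EquivBX$, and the converse by reducing to permutations and then exhibiting, for every elementary step, a single Baxter move. Since $\EquivS$ and $\EquivCoS$ are equivalence relations and $\EquivBX$ is the transitive closure of $\AdjBXA$ and $\AdjBXB$, for the first direction it suffices to see each adjacency as being both a sylvester and a $\#$-sylvester equivalence. For $\AdjBXA$, writing $\LettreC u \LettreA \LettreD v \LettreB$ with $\LettreA \le \LettreB < \LettreC \le \LettreD$, the factor $\LettreA \LettreD v \LettreB$ meets the sylvester condition $\LettreA \le \LettreB < \LettreD$, so $\LettreA \LettreD v \LettreB \AdjS \LettreD \LettreA v \LettreB$, and prepending $\LettreC u$ gives the sylvester equivalence; dually the prefix $\LettreC u \LettreA \LettreD$ meets the $\#$-sylvester condition $\LettreA < \LettreC \le \LettreD$, so $\LettreC u \LettreA \LettreD \AdjCoS \LettreC u \LettreD \LettreA$, and appending $v \LettreB$ gives the $\#$-sylvester equivalence. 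The relation $\AdjBXB$ is symmetric to $\AdjBXA$ and can be handled the same way, or deduced from it through Proposition~\ref{prop:CompSchutz}, since $\AdjBXB$ is the image of $\AdjBXA$ under the Schützenberger involution and $u \EquivCoS v$ iff $u^\# \EquivS v^\#$.

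For the converse I would first reduce to permutations. Sylvester and $\#$-sylvester rewritings only permute letters, so $u \EquivS v$ already forces $\Eval(u) = \Eval(v)$; using compatibility with destandardization for the three monoids (Proposition~\ref{prop:CompDestd} for $\EquivBX$ and the analogous known statements for the sylvester and $\#$-sylvester monoids), it then suffices to prove $\Std(u) \EquivBX \Std(v)$. Hence I may assume from now on that we have two permutations $\sigma, \nu \in \EnsPermu_n$ with $\sigma \EquivS \nu$ and $\sigma \EquivCoS \nu$, and I must show $\sigma \EquivBX \nu$.

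The heart of the argument is the following local statement: if $\pi \OrdPermu \pi'$ is a covering, say $\pi = w\, s\, \ell\, w'$ and $\pi' = w\, \ell\, s\, w'$ with $s < \ell$ adjacent, and if moreover $\pi \EquivS \pi'$ and $\pi \EquivCoS \pi'$, then $\pi \EquivBX \pi'$ by one Baxter move. Indeed, two permutations differing by a single adjacent transposition can be sylvester-equivalent only if some letter $b$ with $s < b < \ell$ occurs in $w'$ (this is read off from the binary search tree description of sylvester classes: absent any value between $s$ and $\ell$ to their right, inserting right-to-left yields distinct trees), and symmetrically $\pi \EquivCoS \pi'$ forces a letter $c$ with $s < c < \ell$ in $w$. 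Now if $b < c$ then $\pi \AdjBXA \pi'$, matching the pattern $\LettreC u \LettreA \LettreD v \LettreB$ via $\LettreC = c$, $\LettreA = s$, $\LettreD = \ell$, $\LettreB = b$ (its hypothesis $\LettreA \le \LettreB < \LettreC \le \LettreD$ reading $s \le b < c \le \ell$); if instead $c < b$ then $\pi' \AdjBXB \pi$ with leading letter $c$ and trailing letter $b$ (its hypothesis $\LettreA < \LettreB \le \LettreC < \LettreD$ reading $s < c \le b < \ell$). As $b \ne c$ one of the two cases always occurs, so $\pi \EquivBX \pi'$.

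To globalize, I would use that sylvester and $\#$-sylvester classes are intervals of the permutohedron (they are classes of lattice congruences of the weak order; for $\#$-sylvester this follows since the Schützenberger map is an automorphism of $\OrdPermu$ carrying sylvester classes to $\#$-sylvester classes). Consequently the common class of $\sigma$ and $\nu$ is an intersection of two intervals, hence an interval $[\alpha,\beta]$; any $\pi \in [\alpha,\beta]$ is joined to $\beta$ by a saturated chain of $\OrdPermu$ lying inside $[\alpha,\beta]$, each covering step of which stays in both classes and is therefore a Baxter move by the local statement. Thus $\pi \EquivBX \beta$ for all $\pi$ in the class, and in particular $\sigma \EquivBX \beta \EquivBX \nu$. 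The main obstacle is exactly the local statement — more precisely, extracting the two intermediate letters $b$ and $c$ from the bare equivalences $\pi \EquivS \pi'$ and $\pi \EquivCoS \pi'$; this is where the structure of sylvester classes (their binary-search-tree invariant and their order-convexity) genuinely enters. Once the two witnesses are available, the case analysis producing a Baxter move, together with the interval/saturated-chain packaging, is routine.
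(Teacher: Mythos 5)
Your proof is correct and follows the same route as the author's (the proceedings version omits the proof, but the full version of this work argues exactly this way): the forward direction by inspecting the adjacency relations, and the converse by destandardizing, using that sylvester and $\#$-sylvester classes are intervals of the permutohedron so that their intersection is an interval, and showing that each covering step inside that interval is a single Baxter adjacency obtained from a witness $b$ to the right (forced by $\EquivS$) and a witness $c$ to the left (forced by $\EquivCoS$), the case $b<c$ giving $\AdjBXA$ and the case $c<b$ giving $\AdjBXB$. The one point you leave implicit --- that a covering pair $w\,s\,\ell\,w'$, $w\,\ell\,s\,w'$ can be sylvester-equivalent only if $w'$ contains a letter strictly between $s$ and $\ell$ --- is exactly the standard binary-search-tree characterization of sylvester classes that you cite, so there is no gap.
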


Proposition~\ref{prop:LienSylv} shows that the $\EquivBX$-equivalence classes
are the intersection of $\EquivS$-equivalence classes and $\EquivCoS$-equivalence
classes.

\section{A Robinson-Schensted-like algorithm} \label{sec:RobinsonSchensted}

We shall describe here an insertion algorithm $u \mapsto \left(\PSymb(u), \QSymb(u)\right)$,
such that, given a word $u \in A^*$, it computes its $\PSymb$-symbol, that
is a pair of $A$-labeled twin binary trees $(T_L, T_R)$ where $T_L$ (resp. $T_R$)
is a left (resp. right) binary search tree, and its $\QSymb$-symbol, a
decreasing binary tree.

\subsection{Definition of the insertion algorithm}
Let $T$ be an $A$-labeled right binary search tree and $\LettreB$  a letter
of $A$. The \emph{lower restricted binary tree} of $T$ compared to $\LettreB$,
namely $T_{\leq \LettreB}$, is the right binary search tree uniquely made
of the nodes $x$ of $T$ labeled by a letter $\LettreA$ satisfying $\LettreA \leq \LettreB$
and such that for all nodes $x$ and $y$ of $T_{\leq \LettreB}$, if $x$ is
ancestor of $y$ in $T_{\leq \LettreB}$, then $x$ is ancestor of $y$ in $T$.
In the same way, we define the \emph{higher restricted binary tree} of $T$
compared to $\LettreB$, namely $T_{> \LettreB}$ (see Figure~\ref{fig:ExempleABRestreints}).
\begin{figure}[ht]
    \centering
    \scalebox{.35}{
        \begin{tikzpicture}
            \node[Noeud,EtiqFonce](0)at(0,-2){$1$};
            \node[Noeud,EtiqFonce](1)at(1,-1){$1$};
            \draw[Arete](1)--(0);
            \node[Noeud,EtiqFonce](2)at(2,-3){$2$};
            \node[Noeud,EtiqClair](3)at(3,-4){$3$};
            \draw[Arete](2)--(3);
            \node[Noeud,EtiqClair](4)at(4,-2){$3$};
            \draw[Arete](4)--(2);
            \draw[Arete](1)--(4);
            \node[Noeud,EtiqClair](5)at(5,0){$4$};
            \draw[Arete](5)--(1);
            \node[Noeud,EtiqClair](6)at(6,-1){$5$};
            \draw[Arete](5)--(6);
        \end{tikzpicture}
        \hspace{3em}
        \raisebox{3em}{\begin{tikzpicture}
            \node[Noeud,EtiqFonce](0)at(0,-1){$1$};
            \node[Noeud,EtiqFonce](1)at(1,0){$1$};
            \draw[Arete](1)--(0);
            \node[Noeud,EtiqFonce](2)at(2,-1){$2$};
            \draw[Arete](1)--(2);
        \end{tikzpicture}}
        \hspace{3em}
        \raisebox{2em}{\begin{tikzpicture}
            \node[Noeud,EtiqClair](0)at(0,-2){$3$};
            \node[Noeud,EtiqClair](1)at(1,-1){$3$};
            \draw[Arete](1)--(0);
            \node[Noeud,EtiqClair](2)at(2,0){$4$};
            \draw[Arete](2)--(1);
            \node[Noeud,EtiqClair](3)at(3,-1){$5$};
            \draw[Arete](2)--(3);
        \end{tikzpicture}}
    }
    \caption{A right binary search tree $T$, $T_{\leq 2}$ and $T_{>2}$.}
    \label{fig:ExempleABRestreints}
\end{figure}

Let $T$ be an $A$-labeled right binary search tree and $\LettreA$ a letter
of $A$. The \emph{root insertion} of $\LettreA$ into $T$ consists in modifying $T$
so that the root of $T$ is a new node labeled by $\LettreA$, its left subtree is
$T_{\leq \LettreA}$ and its right subtree is $T_{> \LettreA}$.

Let $T$ be an $A$-labeled left (resp. right) binary search tree and $\LettreA$ a letter
of $A$. The \emph{leaf insertion} of $\LettreA$ into $T$ is recursively defined
by: If $T = \ArbreVide$, the result is the one-node binary tree labeled by $\LettreA$;
Else, if the label $\LettreB$ of the root of $T$ satisfies $\LettreA < \LettreB$
(resp. $\LettreA \leq \LettreB$), make a leaf insertion of $\LettreA$ into
the left subtree of $T$, else, make a leaf insertion of $\LettreA$ into
the right subtree of $T$.

Given a pair of $A$-labeled twin binary trees $(T_L, T_R)$ where $T_L$
(resp. $T_R$) is a left (resp. right) binary search tree, the \emph{insertion}
of the letter $\LettreA$ of $A$ into $(T_L, T_R)$ consists in making a leaf insertion of
$\LettreA$ into $T_L$ and a root insertion of $\LettreA$ into $T_R$.

The $\PSymb$-symbol $(T_L, T_R)$ of a word $u \in A^*$ is computed by
iteratively inserting the letters of $u$, from left to right, into the
pair of twin binary trees $(\ArbreVide, \ArbreVide)$. The $\QSymb$-symbol
of $u$ is the decreasing binary tree labeled on $\{1, \ldots, |u|\}$,
built by recording the dates of creation of each node of $T_R$
(see Figure~\ref{fig:ExemplePQSymbole}).

\begin{figure}[ht]
    \centering
    \begin{tabular}{cc}
    $\ArbreVide \ArbreVide$
    \hspace{1em}$\xrightarrow{5}$\hspace{1em}
    \scalebox{.34}{
        \begin{tikzpicture}
            \node[Noeud,EtiqFonce](0)at(0,0){$5$};
        \end{tikzpicture}
        \begin{tikzpicture}
            \node[Noeud,EtiqFonce](0)at(0,0){$5$};
        \end{tikzpicture}
    }
    \hspace{1em}$\xrightarrow{4}$\hspace{1em}
    \scalebox{.33}{
        \raisebox{-2em}{
        \begin{tikzpicture}
            \node[Noeud,EtiqFonce](0)at(0,-1){$4$};
            \node[Noeud,EtiqClair](1)at(1,0){$5$};
            \draw[Arete](1)--(0);
        \end{tikzpicture}
        \begin{tikzpicture}
            \node[Noeud,EtiqFonce](0)at(0,0){$4$};
            \node[Noeud,EtiqClair](1)at(1,-1){$5$};
            \draw[Arete](0)--(1);
        \end{tikzpicture}}
    }
    \hspace{1em}$\xrightarrow{2}$\hspace{1em}
    \scalebox{.33}{
        \raisebox{-3.3em}{
        \begin{tikzpicture}
            \node[Noeud,EtiqFonce](0)at(0,-2){$2$};
            \node[Noeud,EtiqClair](1)at(1,-1){$4$};
            \draw[Arete](1)--(0);
            \node[Noeud,EtiqClair](2)at(2,0){$5$};
            \draw[Arete](2)--(1);
        \end{tikzpicture}
        \begin{tikzpicture}
            \node[Noeud,EtiqFonce](0)at(0,0){$2$};
            \node[Noeud,EtiqClair](1)at(1,-1){$4$};
            \node[Noeud,EtiqClair](2)at(2,-2){$5$};
            \draw[Arete](1)--(2);
            \draw[Arete](0)--(1);
        \end{tikzpicture}}
    }
    \hspace{1em}$\xrightarrow{5}$ \\[2em]
    \scalebox{.33}{
        \raisebox{-3.3em}{
        \begin{tikzpicture}
            \node[Noeud,EtiqClair](0)at(0,-2){$2$};
            \node[Noeud,EtiqClair](1)at(1,-1){$4$};
            \draw[Arete](1)--(0);
            \node[Noeud,EtiqClair](2)at(2,0){$5$};
            \draw[Arete](2)--(1);
            \node[Noeud,EtiqFonce](3)at(3,-1){$5$};
            \draw[Arete](2)--(3);
        \end{tikzpicture}
        \begin{tikzpicture}
            \node[Noeud,EtiqClair](0)at(0,-1){$2$};
            \node[Noeud,EtiqClair](1)at(1,-2){$4$};
            \node[Noeud,EtiqClair](2)at(2,-3){$5$};
            \draw[Arete](1)--(2);
            \draw[Arete](0)--(1);
            \node[Noeud,EtiqFonce](3)at(3,0){$5$};
            \draw[Arete](3)--(0);
        \end{tikzpicture}}
    }
    \hspace{1em}$\xrightarrow{4}$\hspace{1em}
    \scalebox{.33}{
        \raisebox{-3.3em}{
        \begin{tikzpicture}
            \node[Noeud,EtiqClair](0)at(0,-2){$2$};
            \node[Noeud,EtiqClair](1)at(1,-1){$4$};
            \draw[Arete](1)--(0);
            \node[Noeud,EtiqFonce](2)at(2,-2){$4$};
            \draw[Arete](1)--(2);
            \node[Noeud,EtiqClair](3)at(3,0){$5$};
            \draw[Arete](3)--(1);
            \node[Noeud,EtiqClair](4)at(4,-1){$5$};
            \draw[Arete](3)--(4);
        \end{tikzpicture}
        \begin{tikzpicture}
            \node[Noeud,EtiqClair](0)at(0,-1){$2$};
            \node[Noeud,EtiqClair](1)at(1,-2){$4$};
            \draw[Arete](0)--(1);
            \node[Noeud,EtiqFonce](2)at(2,0){$4$};
            \draw[Arete](2)--(0);
            \node[Noeud,EtiqClair](3)at(3,-2){$5$};
            \node[Noeud,EtiqClair](4)at(4,-1){$5$};
            \draw[Arete](4)--(3);
            \draw[Arete](2)--(4);
        \end{tikzpicture}}
    }
    \hspace{1em}$\xrightarrow{2}$ \\[2em]
    \scalebox{.33}{
        \raisebox{-5.3em}{
        \begin{tikzpicture}
            \node[Noeud,EtiqClair](0)at(0,-2){$2$};
            \node[Noeud,EtiqFonce](1)at(1,-3){$2$};
            \draw[Arete](0)--(1);
            \node[Noeud,EtiqClair](2)at(2,-1){$4$};
            \draw[Arete](2)--(0);
            \node[Noeud,EtiqClair](3)at(3,-2){$4$};
            \draw[Arete](2)--(3);
            \node[Noeud,EtiqClair](4)at(4,0){$5$};
            \draw[Arete](4)--(2);
            \node[Noeud,EtiqClair](5)at(5,-1){$5$};
            \draw[Arete](4)--(5);
        \end{tikzpicture}
        \begin{tikzpicture}
            \node[Noeud,EtiqClair](0)at(0,-1){$2$};
            \node[Noeud,EtiqFonce](1)at(1,0){$2$};
            \draw[Arete](1)--(0);
            \node[Noeud,EtiqClair](2)at(2,-2){$4$};
            \node[Noeud,EtiqClair](3)at(3,-1){$4$};
            \draw[Arete](3)--(2);
            \node[Noeud,EtiqClair](4)at(4,-3){$5$};
            \node[Noeud,EtiqClair](5)at(5,-2){$5$};
            \draw[Arete](5)--(4);
            \draw[Arete](3)--(5);
            \draw[Arete](1)--(3);
        \end{tikzpicture}}
    }
    $\xrightarrow{4}$
    \scalebox{.33}{
        \raisebox{-5.3em}{
        \begin{tikzpicture}
            \node[Noeud,EtiqClair](0)at(0,-2){$2$};
            \node[Noeud,EtiqClair](1)at(1,-3){$2$};
            \draw[Arete](0)--(1);
            \node[Noeud,EtiqClair](2)at(2,-1){$4$};
            \draw[Arete](2)--(0);
            \node[Noeud,EtiqClair](3)at(3,-2){$4$};
            \node[Noeud,EtiqFonce](4)at(4,-3){$4$};
            \draw[Arete](3)--(4);
            \draw[Arete](2)--(3);
            \node[Noeud,EtiqClair](5)at(5,0){$5$};
            \draw[Arete](5)--(2);
            \node[Noeud,EtiqClair](6)at(6,-1){$5$};
            \draw[Arete](5)--(6);
        \end{tikzpicture}
        \begin{tikzpicture}
            \node[Noeud,EtiqClair](0)at(0,-2){$2$};
            \node[Noeud,EtiqClair](1)at(1,-1){$2$};
            \draw[Arete](1)--(0);
            \node[Noeud,EtiqClair](2)at(2,-3){$4$};
            \node[Noeud,EtiqClair](3)at(3,-2){$4$};
            \draw[Arete](3)--(2);
            \draw[Arete](1)--(3);
            \node[Noeud,EtiqFonce](4)at(4,0){$4$};
            \draw[Arete](4)--(1);
            \node[Noeud,EtiqClair](5)at(5,-2){$5$};
            \node[Noeud,EtiqClair](6)at(6,-1){$5$};
            \draw[Arete](6)--(5);
            \draw[Arete](4)--(6);
        \end{tikzpicture}}
    }
    {\footnotesize $ = \PSymb(u)$;}
    &
    \scalebox{.33}{
        \raisebox{-5.3em}{
        \begin{tikzpicture}
            \node[Noeud,EtiqClair](0)at(0,-2){$3$};
            \node[Noeud,EtiqClair](1)at(1,-1){$6$};
            \draw[Arete](1)--(0);
            \node[Noeud,EtiqClair](2)at(2,-3){$2$};
            \node[Noeud,EtiqClair](3)at(3,-2){$5$};
            \draw[Arete](3)--(2);
            \draw[Arete](1)--(3);
            \node[Noeud,EtiqClair](4)at(4,0){$7$};
            \draw[Arete](4)--(1);
            \node[Noeud,EtiqClair](5)at(5,-2){$1$};
            \node[Noeud,EtiqClair](6)at(6,-1){$4$};
            \draw[Arete](6)--(5);
            \draw[Arete](4)--(6);
        \end{tikzpicture}}
    }
    {\footnotesize $ = \QSymb(u)$}
    \end{tabular}
    \caption{Steps of computation of the $\PSymb$-symbol and the $\QSymb$-symbol of $u := 5425424$.}
    \label{fig:ExemplePQSymbole}
\end{figure}

\subsection{Validity of the insertion algorithm}

\begin{Lemme} \label{lem:Insertion}
    Let $u \in A^*$. Let $T$ be the right binary search tree obtained
    by root insertions of the letters of $u$, from left to right.
    Let $T'$ be the right binary search tree obtained by leaf insertions
    of the letters of $u$, from right to left. Then, $T = T'$.
\end{Lemme}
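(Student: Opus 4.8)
The plan is to argue by induction on the length $n := |u|$, writing $u = u' \LettreC$ with $\LettreC := u_n$ its last letter. The starting observation is that $\LettreC$ labels the root of both trees: in $T$ it is the last letter submitted to a root insertion, hence the final root, whereas in $T'$ it is the very first letter submitted to a leaf insertion, so it sits at the root and is never displaced by the subsequent leaf insertions. It therefore suffices to identify the left subtrees of $T$ and $T'$ and, symmetrically, their right subtrees. Set $S^- := \{\LettreA \in A : \LettreA \leq \LettreC\}$ and $S^+ := \{\LettreA \in A : \LettreA > \LettreC\}$. On the $T'$ side, each remaining letter $\LettreA$ of $u'$, inserted by leaf insertion from right to left, descends into the left subtree exactly when $\LettreA \leq \LettreC$ and into the right subtree exactly when $\LettreA > \LettreC$, and it stays there; hence the left (resp. right) subtree of $T'$ is the tree obtained by leaf insertions, from right to left, of the letters of $u'_{|S^-}$ (resp. $u'_{|S^+}$). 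On the $T$ side, by the very definition of root insertion, the left (resp. right) subtree of $T$ is $S_{\leq \LettreC}$ (resp. $S_{> \LettreC}$), where $S$ is the right binary search tree obtained by root insertions of the letters of $u'$, from left to right.

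The crux is then an auxiliary fact stating that root insertion commutes with lower and higher restriction: for every word $w$ and every letter $\LettreB$, the right binary search tree obtained by root insertions (left to right) of $w_{|\{\LettreA : \LettreA \leq \LettreB\}}$ equals $R_{\leq \LettreB}$, where $R$ is the tree obtained by root insertions of $w$, and symmetrically for $> \LettreB$. I would prove this by a secondary induction on $|w|$, peeling off the last letter $\LettreC'$ of $w$. Two elementary properties of restricted binary trees drive the computation: nested lower restrictions compose to the smaller threshold (restricting to $\leq \LettreB$ and then to $\leq \LettreB'$ yields the restriction to labels below $\min(\LettreB,\LettreB')$, since each restriction preserves ancestor relations and transitivity is inherited), and restriction distributes over the root/subtree decomposition (if the root survives, the restricted tree is the root together with the restrictions of its two subtrees). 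When $\LettreC' \leq \LettreB$ the root survives on both sides, and the two properties together with the inner inductive hypothesis match the subtrees; when $\LettreC' > \LettreB$ the root and its whole right subtree are discarded, and the surviving nodes are precisely those of the restricted left subtree, again matched using the inner inductive hypothesis.

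With the commutation lemma in hand, the left subtree of $T$ is $S_{\leq \LettreC}$, which equals the root insertion of $u'_{|S^-}$; by the main induction hypothesis applied to the shorter word $u'_{|S^-}$, this root insertion coincides with the leaf insertion from right to left of $u'_{|S^-}$, that is, with the left subtree of $T'$. The right subtrees are matched symmetrically using $S^+$, whence $T = T'$. The main obstacle I anticipate lies entirely in the commutation lemma, and within it in the bookkeeping of the restriction operations — in particular verifying that restriction distributes over the recursive structure of root insertion and that, in the case $\LettreC' > \LettreB$, discarding the root leaves exactly the restricted left subtree. Everything else is a routine unwinding of the definitions of root and leaf insertion, together with the shared ``$\leq$ goes left, $>$ goes right'' convention for right binary search trees, which is precisely what makes the two processes agree even in the presence of repeated letters.
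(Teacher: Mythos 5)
Your argument is correct. Note that the paper, being an extended abstract, states Lemma~\ref{lem:Insertion} without any proof, so there is no written argument to compare yours against; but the induction you propose --- peel off the last letter $\LettreC$ of $u$, observe that it labels the root of both $T$ and $T'$, identify the left (resp.\ right) subtree of $T'$ with the right-to-left leaf-insertion tree of $u'_{|\{\LettreA \,:\, \LettreA \leq \LettreC\}}$ (resp.\ $u'_{|\{\LettreA \,:\, \LettreA > \LettreC\}}$), and reduce the corresponding subtrees of $T$ to the same restricted words via the commutation of root insertion with restriction --- is the natural proof and it goes through. One bookkeeping point deserves to be made explicit: in the inductive step of your commutation lemma, case $\LettreC' \leq \LettreB$, write $R$ for the root-insertion tree of $w = w'\LettreC'$ and $R'$ for that of $w'$, so that $R$ has root $\LettreC'$, left subtree $R'_{\leq \LettreC'}$ and right subtree $R'_{> \LettreC'}$; matching the right subtrees then requires $(R'_{> \LettreC'})_{\leq \LettreB} = (R'_{\leq \LettreB})_{> \LettreC'}$, i.e.\ the commutation of a lower with a higher restriction, which is not literally covered by your two stated properties (nested lower restrictions composing to the minimum, and distribution over the root decomposition). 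It follows by the same mechanism --- both sides are the unique right binary search tree on the nodes of $R'$ with labels in the interval $(\LettreC', \LettreB]$ whose ancestor relation is inherited from $R'$ --- but it should be listed alongside the other two facts, and you should say explicitly that all three rest on the uniqueness clause in the paper's definition of $T_{\leq \LettreB}$.
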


\begin{Lemme} \label{lem:FeuillesInversions}
    Let $\sigma \in \EnsPermu$ and $T \in \EnsAB_{|\sigma|}$ be the binary search tree obtained
    by leaf insertions of the letters of $\sigma$, from left to right.
    Then, for $1 \leq i \leq |\sigma|\!-\!1$, the $i\!+\!1$-st leaf of $T$ is
    right-oriented iff $(i, i\!+\!1)$ is a co-inversion of $\sigma$.
\end{Lemme}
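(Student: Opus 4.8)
The plan is to identify, for each $i$ with $1 \leq i \leq |\sigma|-1$, the $(i\!+\!1)$-st leaf of $T$ with the gap between two consecutive nodes in the infix (in-order) reading of $T$, and then to translate the orientation of that leaf into an ancestry relation between these nodes, which in turn reflects the relative order of the values $i$ and $i\!+\!1$ in $\sigma$. Since $\sigma$ is a permutation all labels are distinct, so the left and right binary search tree conventions of leaf insertion coincide here. First I would record that, $T$ being a binary search tree, its infix reading lists the labels $1, 2, \ldots, |\sigma|$ in increasing order; hence the node read in $j$-th position carries the label $j$. Under this reading nodes and leaves alternate, so the $(i\!+\!1)$-st leaf is precisely the leaf separating the node labeled $i$ from the node labeled $i\!+\!1$.

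The core combinatorial observation is that two nodes consecutive in infix order are comparable for the ancestor relation, and that exactly one is an ancestor of the other: otherwise their lowest common ancestor would carry a label lying strictly between $i$ and $i\!+\!1$, which is impossible for consecutive integers. I would then split into the two cases. If the node labeled $i\!+\!1$ is an ancestor of the node labeled $i$, then node $i$ is the rightmost node of the left subtree of node $i\!+\!1$, hence has no right child, and the separating leaf is its right child, so it is right-oriented. Symmetrically, if node $i$ is an ancestor of node $i\!+\!1$, then node $i\!+\!1$ is the leftmost node of the right subtree of node $i$, the separating leaf is its left child, and it is left-oriented. Thus the $(i\!+\!1)$-st leaf is right-oriented iff node $i\!+\!1$ is an ancestor of node $i$. (Here I use, as confirmed by Figure~\ref{fig:ExempleCanopee}, that a right-oriented leaf is exactly one that is a right child.)

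Finally I would connect ancestry to insertion order. In a binary search tree built by leaf insertions, every ancestor of a node is inserted before it; combined with the fact that the lowest common ancestor of the nodes labeled $i$ and $i\!+\!1$ is one of the two, this yields that node $i\!+\!1$ is an ancestor of node $i$ iff the value $i\!+\!1$ is inserted before the value $i$, that is, iff $i\!+\!1$ occurs before $i$ in $\sigma$, i.e. $\sigma^{-1}_{i+1} < \sigma^{-1}_i$. This is exactly the condition that $(i, i\!+\!1)$ be a co-inversion of $\sigma$, and chaining the three equivalences yields the claim.

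I expect the main obstacle to be the orientation bookkeeping: ensuring that ``right-oriented'' matches ``right child'', and correctly identifying the separating leaf as the right child of node $i$ in one case and the left child of node $i\!+\!1$ in the other. The ancestor/insertion-order fact is standard, but what makes the whole argument short is that for the consecutive values $i$ and $i\!+\!1$ there is no intermediate value, so the lowest common ancestor is forced to be one of the two nodes themselves.
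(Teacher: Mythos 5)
Your proof is correct and complete. The paper, being an extended abstract, states Lemma~\ref{lem:FeuillesInversions} without proof, so there is nothing to compare against; the route you take --- identifying the $(i\!+\!1)$-st leaf with the gap between the nodes labelled $i$ and $i\!+\!1$ in the infix reading, noting that one of these two nodes must be an ancestor of the other since no label lies strictly between them, and translating ancestry into insertion order --- is the natural argument, and your orientation bookkeeping (right-oriented means right child, the separating leaf being the right child of node $i$ in one case and the left child of node $i\!+\!1$ in the other) is consistent with the canopy convention of Figure~\ref{fig:ExempleCanopee}.
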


If $(T_L, T_R)$ is a pair of labeled twin binary trees, define its
\emph{shape}, that is the pair of unlabeled twin binary trees $(T'_L, T'_R)$
where $T'_L$ (resp. $T'_R$) is the shape of $T_L$ (resp. $T_R$).

\begin{Proposition} \label{prop:PSymboleCoupleJumeaux}
    For all word $u \in A^*$, the shape of the $\PSymb$-symbol of $u$ is a pair of
    twin binary trees.
\end{Proposition}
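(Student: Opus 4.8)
The plan is to reduce the statement to the case of permutations and then simply read off the two canopies from the two preceding lemmas, checking that they are complementary. Recall that both component trees of the $\PSymb$-symbol of a word of length $n$ have $n$ nodes (leaf insertion and root insertion each create exactly one node), so $T_L, T_R \in \EnsAB_n$ and both canopies are words of length $n - 1$; the claim to be proved is that $\Canop(T_L)_i \neq \Canop(T_R)_i$ for all $1 \leq i \leq n - 1$.

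First I would argue that the shape of the $\PSymb$-symbol of $u$ depends only on $\Std(u)$, so that it suffices to treat the case $u = \sigma \in \EnsPermu_n$. The shape is determined entirely by the sequence of left/right branching decisions performed during the insertions, and each such decision is a comparison of two letters. Standardization turns $u$ into a permutation that preserves the strict order of distinct letters and breaks ties from left to right. To make the verification uniform I would first apply Lemma~\ref{lem:Insertion} to describe $T_R$ as a leaf-insertion tree as well; then one only needs to check that the two tie-breaking conventions of left and right binary search trees (equal letters branch right, resp.\ left) are both compatible with the single left-to-right tie-break of $\Std$. This is the step I expect to be the main obstacle: it is elementary but must be done with care, because $T_L$ and $T_R$ use opposite conventions and are built in opposite scanning directions, yet both must be seen to be unaffected by $u \mapsto \Std(u)$.

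Next, for a permutation $\sigma$, I would read off $\Canop(T_L)$. Since $T_L$ is obtained by leaf insertions of $\sigma$ from left to right, Lemma~\ref{lem:FeuillesInversions} gives that the $(i+1)$-st leaf is right-oriented iff $(i, i+1)$ is a co-inversion of $\sigma$; by the definition of the canopy this means $\Canop(T_L)_i = 0$ iff $(i, i+1)$ is a co-inversion of $\sigma$. (Here the leaf index is intrinsic: in a binary search tree on $\{1, \dots, n\}$ the $(i+1)$-st leaf lies in the in-order gap between the nodes labeled $i$ and $i+1$, so it refers to the same value gap in both trees.) For $\Canop(T_R)$, Lemma~\ref{lem:Insertion} identifies $T_R$ with the right binary search tree obtained by leaf insertions of $\sigma$ from right to left, that is, by leaf insertions of the mirror word $\widetilde{\sigma} := \sigma_n \ldots \sigma_1$ from left to right. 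Applying Lemma~\ref{lem:FeuillesInversions} to $\widetilde{\sigma}$ yields $\Canop(T_R)_i = 0$ iff $(i, i+1)$ is a co-inversion of $\widetilde{\sigma}$.

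Finally I would close with the short arithmetic relating the two co-inversion conditions. Writing $\widetilde{\sigma}^{-1}_k = n + 1 - \sigma^{-1}_k$, the inequality defining a co-inversion reverses: $(i, i+1)$ is a co-inversion of $\widetilde{\sigma}$ iff $\sigma^{-1}_i < \sigma^{-1}_{i+1}$, which is exactly the negation of ``$(i, i+1)$ is a co-inversion of $\sigma$'' (there is no equality case since $\sigma$ is a permutation). Combining the three displays gives $\Canop(T_L)_i = 0 \iff \Canop(T_R)_i = 1$ for every $i$, so the two canopies are complementary and the shape of the $\PSymb$-symbol is a pair of twin binary trees. The mirror identity and the identification of the leaf index with the value gap are routine; the real work is the standardization reduction described above.
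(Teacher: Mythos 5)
Your proof is correct and follows exactly the route the paper sets up: it reduces to permutations by standardization, uses Lemma~\ref{lem:Insertion} to realize $T_R$ as the leaf-insertion tree of the mirrored word, and applies Lemma~\ref{lem:FeuillesInversions} to $\sigma$ and $\widetilde{\sigma}$ to see that the two canopies record complementary co-inversion conditions. The paper (an extended abstract) omits the assembly of these two lemmas, but your argument is precisely the intended one and the details you flag (tie-breaking under standardization, the identification of the $(i+1)$-st leaf with the gap between values $i$ and $i+1$) all check out.
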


\begin{Proposition} \label{prop:PSymboleClasses}
    Let $u, v \in A^*$. Then, $u \EquivBX v$ iff $\PSymb(u) = \PSymb(v)$.
\end{Proposition}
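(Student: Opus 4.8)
The plan is to reduce the statement to the corresponding characterizations for the sylvester and the $\#$-sylvester monoids, each of which is easier to handle because it is governed by a single one of the two binary search trees forming the $\PSymb$-symbol. Writing $\PSymb(u) = (T_L, T_R)$ and regarding $T_L, T_R$ as functions of $u$, I would establish the two equivalences
\begin{align}
    u \EquivS v & \iff T_R(u) = T_R(v), \label{eq:plan_sylv} \\
    u \EquivCoS v & \iff T_L(u) = T_L(v). \label{eq:plan_cosylv}
\end{align}
Granting these, Proposition~\ref{prop:LienSylv} asserts that $u \EquivBX v$ iff $u \EquivS v$ and $u \EquivCoS v$, which by~\eqref{eq:plan_sylv} and~\eqref{eq:plan_cosylv} holds iff $T_R(u) = T_R(v)$ and $T_L(u) = T_L(v)$, that is iff $\PSymb(u) = \PSymb(v)$, as desired.

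For~\eqref{eq:plan_sylv}, I would first use Lemma~\ref{lem:Insertion} to identify $T_R(u)$, defined through root insertions from left to right, with the right binary search tree obtained by leaf insertions of the letters of $u$ read from right to left. This tree is precisely the $\PSymb$-symbol attached to the sylvester monoid, whose equivalence classes are known to be exactly its fibers~\cite{HNT02-2, HNT05-2}. To keep the argument self-contained I would verify the two inclusions directly: that a single rewriting $\LettreA \LettreC w \LettreB \AdjS \LettreC \LettreA w \LettreB$ with $\LettreA \leq \LettreB < \LettreC$ leaves $T_R$ unchanged --- reading from the right, $\LettreB$ is inserted before both $\LettreA$ and $\LettreC$, and the inequalities $\LettreA \leq \LettreB < \LettreC$ force $\LettreA$ and $\LettreC$ into distinct subtrees of the node carrying $\LettreB$, so the order in which $\LettreA$ and $\LettreC$ are inserted last is irrelevant --- and conversely that any two words with the same $T_R$ are connected by such rewritings, which I would obtain by a normal-form argument by induction on the word length.

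Equivalence~\eqref{eq:plan_cosylv} would then be deduced from~\eqref{eq:plan_sylv} by Schützenberger duality. Let $\theta$ denote the involution on labeled binary trees that reflects the underlying tree (recursively exchanging left and right subtrees) and replaces each label by its complement $\max\!+\!1\!-\!(\cdot)$; a short check shows that $\theta$ maps left binary search trees bijectively onto right binary search trees. The heart of the matter is the identity $T_R(u^\#) = \theta(T_L(u))$. To prove it I would again invoke Lemma~\ref{lem:Insertion} to realize $T_R(u^\#)$ through leaf insertions of $u^\#$ read from right to left, that is, through leaf insertions of the complemented letters of $u$ read from left to right, and then argue by induction on $|u|$ that complementing a letter and inserting it by the leaf rule into a right binary search tree is intertwined by $\theta$ with the leaf insertion of the original letter into the corresponding left binary search tree --- the comparison $\LettreA < \LettreB$ steering left in $T_L$ becoming $\max\!+\!1\!-\!\LettreA > \max\!+\!1\!-\!\LettreB$ steering right in the reflected tree. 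Chaining the defining equivalence $u \EquivCoS v \iff u^\# \EquivS v^\#$ with~\eqref{eq:plan_sylv} then gives $u \EquivCoS v$ iff $\theta(T_L(u)) = \theta(T_L(v))$, and injectivity of $\theta$ yields~\eqref{eq:plan_cosylv}.

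The main obstacle I anticipate is the converse inclusion in~\eqref{eq:plan_sylv}: that words sharing the same right binary search tree are genuinely connected by sylvester rewritings, and not merely happen to produce the same tree. One must exhibit, for each tree, a canonical representative word (for instance a suitable traversal of the tree) and show that every word in the fiber reduces to it using only moves of the form $\AdjS$. Everything else --- the compatibility of a single adjacency with $T_R$, the bijectivity of $\theta$, and the intertwining identity $T_R(u^\#) = \theta(T_L(u))$ --- is a routine induction once the insertion rules are unwound.
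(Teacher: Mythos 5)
Your proposal is correct and follows essentially the route the paper sets up: Lemma~\ref{lem:Insertion} is there precisely to identify $T_R$ with the sylvester $\PSymb$-symbol of~\cite{HNT02-2, HNT05-2} (and, via the Schützenberger involution, $T_L$ with the $\#$-sylvester one), after which Proposition~\ref{prop:LienSylv} gives the statement by intersecting the two fibrations. The only point to be careful about, which you correctly flag as the crux, is the converse inclusion for the sylvester case (same tree implies connectedness by $\AdjS$-rewritings); this is the known result from the sylvester monoid papers and is also what the paper relies on.
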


In particular, we have $\PSymb(\sigma) = \PSymb(\nu)$ iff the permutations
$\sigma$ and $\nu$ are $\EquivBX$-equivalent. Moreover, each $\EquivBX$-equivalence
class of permutations can be encoded by a pair of unlabeled twin binary
trees because there is one unique way to bijectively label a binary tree
with $n$ nodes on $\{1, \ldots, n\}$ such that it is a binary search tree.

\begin{Remarque}
    Let $u, v \in A^*$ and $(T_L, T_R) := \PSymb(u)$. We have $u \EquivBX v$
    iff the following two assertions are satisfied:
    \begin{enumerate}[label=(\roman*)]
        \item $v$ is a linear extension of $T_L$ seen as a poset in which
        the smallest element is its root;
        \item $v$ is a linear extension of $T_R$ seen as a poset in which
        minimal elements are the nodes with no descents.
    \end{enumerate}
\end{Remarque}

\section{The Baxter lattice} \label{sec:TreillisBaxter}

\subsection{\texorpdfstring{Some properties of the $\EquivBX$-equivalence classes of permutations}
                           {Some properties of the Baxter equivalence classes of permutations}}

\begin{Theoreme} \label{thm:EquivBXBaxter}
    For all $n\geq 0$, each equivalence class of $\EnsPermu_n /_\EquivBX$
    contains exactly one Baxter permutation.
\end{Theoreme}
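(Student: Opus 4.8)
The plan is to orient the two defining adjacency relations into rewriting rules that \emph{remove} occurrences of the forbidden patterns, and to show that the resulting system has exactly the Baxter permutations as normal forms, one per class. Reading $\AdjBXA$ from left to right rewrites $\LettreC u \LettreA \LettreD v \LettreB$ into $\LettreC u \LettreD \LettreA v \LettreB$; restricted to permutations (so $\LettreA < \LettreB < \LettreC < \LettreD$) the four marked letters read $\LettreC\LettreA\LettreD\LettreB$, i.e.\ the pattern $3-14-2$ with the ascent $\LettreA\LettreD$ adjacent, and the step destroys this occurrence while raising the number of inversions by one. Symmetrically, reading $\AdjBXB$ from left to right rewrites $\LettreB u \LettreD \LettreA v \LettreC$ into $\LettreB u \LettreA \LettreD v \LettreC$, destroying an occurrence of $2-41-3$ and lowering the number of inversions by one. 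A permutation admits no rewriting step if and only if it contains neither $3-14-2$ nor $2-41-3$, that is, if and only if it belongs to $\EnsPermuBX_n$; hence the normal forms are \emph{exactly} the Baxter permutations. Since these rules generate $\EquivBX$, the theorem reduces to showing that the system is terminating and confluent, so that each $\EquivBX$-class has a unique normal form.

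First I would establish existence, i.e.\ termination: every permutation reduces to a Baxter permutation of its class. As each class is finite, this is equivalent to ruling out forward cycles. This is the main obstacle: because $\AdjBXA$ raises and $\AdjBXB$ lowers the number of inversions, that statistic cannot serve as a monovariant, and a single step may also create fresh occurrences of either pattern, so a naive count of pattern occurrences is not monotone either. The technical heart of the proof is therefore the construction of a well-founded measure that \emph{both} rules strictly decrease. I would look for it either as a suitable weighting of the two kinds of pattern occurrences, or as a statistic read off the finite poset of common linear extensions of the pair $(T_L,T_R)$ attached to the class (cf.\ the Remark following Proposition~\ref{prop:PSymboleClasses}), using Proposition~\ref{prop:LienSylv} to control how each rule moves inside the ambient $\EquivS$- and $\EquivCoS$-intervals of the permutohedron.

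It then remains to prove uniqueness, namely that two $\EquivBX$-equivalent Baxter permutations coincide. I would deduce this from local confluence together with termination via Newman's lemma: one verifies that whenever two redexes (of either rule) overlap, the two resulting words admit a common descendant, which is a finite case analysis on the relative positions of the two marked quadruples $(\LettreA,\LettreB,\LettreC,\LettreD)$. Confluence and termination give a unique normal form in each class, and since the normal forms are precisely the elements of $\EnsPermuBX_n$, each class contains exactly one Baxter permutation. As an independent check on the exactness once existence is in hand, the counting route is also available: by Propositions~\ref{prop:PSymboleClasses} and~\ref{prop:PSymboleCoupleJumeaux} the $\EquivBX$-classes of $\EnsPermu_n$ are indexed by pairs of twin binary trees in $\EnsABJ_n$, while the Dulucq--Guibert bijection~\cite{DG94} yields $|\EnsPermuBX_n| = |\EnsABJ_n|$, so that verifying surjectivity of the $\PSymb$-symbol onto $\EnsABJ_n$ forces the (surjective, by existence) map from Baxter permutations to classes to be a bijection.
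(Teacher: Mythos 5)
Your reduction of the statement to the termination and confluence of the oriented rewriting system is a legitimate strategy, and your identification of the normal forms is correct: reading $\AdjBXA$ and $\AdjBXB$ from left to right, a permutation admits no step exactly when it avoids $3-14-2$ and $2-41-3$, that is, when it is Baxter. But the argument stops precisely where the work begins. You correctly observe that neither the inversion number nor a naive count of vincular pattern occurrences is monotone under the two rules, so termination requires ``a well-founded measure that both rules strictly decrease'' --- yet no such measure is produced; you only indicate where you \emph{would look for} one. Likewise local confluence is deferred to ``a finite case analysis'' that is never carried out, and the genuinely delicate critical pairs (two redexes centred on overlapping adjacent positions $(j,j+1)$ and $(j+1,j+2)$; same-position and disjoint redexes are easy) are exactly the ones such an analysis must resolve. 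Without the measure there is no existence of a Baxter permutation in each class, and without confluence there is no uniqueness, so neither half of the theorem is established. The closing ``counting route'' does not repair this: it presupposes both the existence statement (hence termination) and the surjectivity of $\PSymb$ onto $\EnsABJ_n$, neither of which is proved.

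For comparison, the machinery the paper sets up points to a different, self-contained route that bypasses the rewriting system entirely: one shows that the restriction of the $\PSymb$-symbol to $\EnsPermuBX_n$ is a bijection onto $\EnsABJ_n$ --- this is the ``very similar bijection'' to that of Dulucq and Guibert announced in Section~\ref{sec:Prelim}. Since Propositions~\ref{prop:PSymboleCoupleJumeaux} and~\ref{prop:PSymboleClasses} identify the $\EquivBX$-classes of $\EnsPermu_n$ with their images in $\EnsABJ_n$, bijectivity of $\PSymb$ on Baxter permutations immediately yields exactly one Baxter permutation in each fibre, with no need for a termination order or Newman's lemma. If you wish to keep the rewriting approach, the missing termination measure is the one theorem-sized ingredient you must actually supply.
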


\begin{Proposition} \label{prop:EquivBXInter}
    For all $n \geq 0$, each equivalence class of $\EnsPermu_n /_\EquivBX$
    is an interval of the permutohedron.
\end{Proposition}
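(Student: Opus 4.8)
The plan is to reduce the statement to the analogous, already understood, situation for the sylvester monoid by means of Proposition~\ref{prop:LienSylv}, and then to exploit that the permutohedron $(\EnsPermu_n, \OrdPermu)$ is a lattice. First I would recall the two inputs: it is classical that $(\EnsPermu_n, \OrdPermu)$ is a lattice, and it is known that each $\EquivS$-equivalence class of $\EnsPermu_n$ is an interval of the permutohedron~\cite{HNT05-2} (this is the sylvester counterpart of the present statement). The whole argument then rests on combining these facts with the elementary observation that in a lattice the intersection of two intervals is again an interval.

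Next I would check that each $\EquivCoS$-equivalence class is also an interval. On permutations the Schützenberger transformation acts by $\sigma \mapsto \sigma^\#$ with $\sigma^\#_i = |\sigma| + 1 - \sigma_{|\sigma|+1-i}$, that is, by reversing and complementing the one-line notation. This map is an involution of $\EnsPermu_n$ that sends each adjacent transposition to an adjacent transposition, hence each covering relation of $\OrdPermu$ to a covering relation; it is therefore an automorphism of the permutohedron and maps intervals to intervals. Since $u \EquivCoS v$ iff $u^\# \EquivS v^\#$, the $\EquivCoS$-class of $u$ is the image under $\#$ of the $\EquivS$-class of $u^\#$, and is thus an interval.

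Then, given $\sigma \in \EnsPermu_n$, Proposition~\ref{prop:LienSylv} tells us that its $\EquivBX$-class is exactly the intersection of its $\EquivS$-class, say $[a_1, b_1]$, and its $\EquivCoS$-class, say $[a_2, b_2]$. In any lattice one has $[a_1, b_1] \cap [a_2, b_2] = [a_1 \vee a_2,\, b_1 \wedge b_2]$, since an element lies above both $a_1$ and $a_2$ exactly when it lies above $a_1 \vee a_2$, and dually for the upper bounds. As the permutohedron is a lattice and this intersection is nonempty (it contains $\sigma$), the $\EquivBX$-class is an interval, which proves the statement.

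The only delicate point is establishing that the $\EquivCoS$-classes are intervals; everything else is either an invocation of the known sylvester result or the lattice identity for intersecting intervals. A self-contained alternative would be to argue directly that every Baxter adjacency rewriting is precisely a covering of $\OrdPermu$ or its inverse, so that a class is connected in the Hasse diagram; but deducing order-convexity and the uniqueness of the extremal elements from this is appreciably more laborious than the intersection argument, so I would not pursue it.
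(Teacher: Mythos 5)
Your proof is correct and follows exactly the route the paper intends: by Proposition~\ref{prop:LienSylv} a $\EquivBX$-class is the intersection of a $\EquivS$-class and a $\EquivCoS$-class, the former is an interval of the permutohedron by the sylvester result of~\cite{HNT05-2}, the latter is its image under the Schützenberger involution (an automorphism of the weak order), and a nonempty intersection of two intervals in a lattice is an interval. The verification that $\#$ sends covering relations to covering relations is the right thing to make explicit, and your handling of it is sound.
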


For all permutation $\sigma$, let us define $\sigma \uparrow$ (resp. $\sigma \downarrow$)
the maximal (resp. minimal) permutation of the $\EquivBX$-equivalence class of $\sigma$
for the permutohedron order.

\begin{Proposition} \label{prop:OrdreBaxterMinMax}
    Let $\sigma, \nu \in \EnsPermu_n$ such that $\sigma \OrdPermu \nu$.
    Then, $\sigma \uparrow \OrdPermu \nu \uparrow$ and $\sigma \downarrow \OrdPermu \nu \downarrow$.
\end{Proposition}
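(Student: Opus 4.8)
The statement asserts precisely that the two maps $\sigma \mapsto \sigma\uparrow$ and $\sigma \mapsto \sigma\downarrow$, which send a permutation to the maximum (resp.\ minimum) of its $\EquivBX$-class, are order-preserving for $\OrdPermu$. The plan is to recognize this as the standard monotonicity of the projection maps attached to a \emph{lattice congruence}, and to carry it out in two stages: first establish that $\EquivBX$, restricted to $\EnsPermu_n$, is a lattice congruence of the permutohedron (which is a lattice), and then deduce monotonicity by a short formal argument using joins and meets. Proposition~\ref{prop:EquivBXInter} already supplies the needed fact that every class is an interval $[\sigma\downarrow,\sigma\uparrow]$, so that $\sigma\uparrow$ and $\sigma\downarrow$ are well defined.

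\textbf{Reducing to the sylvester case.} The heart of the matter is the compatibility of $\EquivBX$ with the lattice operations: I would show that $\alpha \EquivBX \alpha'$ implies $\alpha \vee \beta \EquivBX \alpha' \vee \beta$ and $\alpha \wedge \beta \EquivBX \alpha' \wedge \beta$ for every $\beta \in \EnsPermu_n$. By Proposition~\ref{prop:LienSylv}, the relation $\EquivBX$ restricted to $\EnsPermu_n$ is exactly the intersection $\EquivS \cap \EquivCoS$, and the intersection of two lattice congruences is again a lattice congruence (an intersection of equivalence relations is an equivalence relation, and the join/meet compatibilities pass to the intersection componentwise). It therefore suffices to know that both $\EquivS$ and $\EquivCoS$ are lattice congruences of the permutohedron. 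For $\EquivS$ this is the classical fact that the sylvester quotient of the permutohedron is the Tamari lattice~\cite{HNT05-2}. For $\EquivCoS$ one uses the identity $u \EquivCoS v \iff u^\# \EquivS v^\#$ recalled in the text together with the observation that the Schützenberger map $\#$ acts on $\EnsPermu_n$ as $\sigma \mapsto w_0\,\sigma\,w_0$, hence as a lattice automorphism of the permutohedron; conjugating the sylvester congruence by this automorphism yields that $\EquivCoS$ is a lattice congruence as well.

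\textbf{Concluding the monotonicity.} Assume $\sigma \OrdPermu \nu$. Then $\sigma \OrdPermu \nu \OrdPermu \nu\uparrow$, so $\sigma \vee \nu\uparrow = \nu\uparrow$. Since $\sigma \EquivBX \sigma\uparrow$ and $\EquivBX$ is compatible with the join, $\nu\uparrow = \sigma \vee \nu\uparrow \EquivBX \sigma\uparrow \vee \nu\uparrow$. Thus $\sigma\uparrow \vee \nu\uparrow$ lies in the $\EquivBX$-class of $\nu\uparrow$ and dominates $\nu\uparrow$ for $\OrdPermu$; as $\nu\uparrow$ is by definition the maximum of that class, we get $\sigma\uparrow \vee \nu\uparrow = \nu\uparrow$, that is $\sigma\uparrow \OrdPermu \nu\uparrow$. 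The dual computation, using $\sigma\downarrow \OrdPermu \sigma \OrdPermu \nu$ so that $\sigma\downarrow \wedge \nu = \sigma\downarrow$, together with compatibility with the meet and the minimality of $\sigma\downarrow$ in its class, gives $\sigma\downarrow \OrdPermu \nu\downarrow$.

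\textbf{Main obstacle.} The only nontrivial input is the lattice-congruence property of $\EquivBX$, which I reduce through Proposition~\ref{prop:LienSylv} to the sylvester and $\#$-sylvester congruences. The delicate point is really the external fact that $\EquivS$ is a lattice congruence of the permutohedron; if one prefers a self-contained treatment, this must be proved directly (for instance on the cover relations of $\OrdPermu$, checking that contracting a sylvester edge forces the compatible contractions), and the rest of the argument above then goes through verbatim.
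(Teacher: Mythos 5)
Your argument is mathematically sound, but it runs the logic in the opposite direction from the paper. The paper's architecture is to establish Proposition~\ref{prop:EquivBXInter} (classes are intervals) and Proposition~\ref{prop:OrdreBaxterMinMax} (the projections $\sigma \mapsto \sigma\uparrow$ and $\sigma \mapsto \sigma\downarrow$ are order-preserving) by direct combinatorial arguments on the $\EquivBX$-classes, and only \emph{then} to invoke the criterion of~\cite{CS98, R05} to conclude that $\EquivBX$ is a lattice congruence of the permutohedron --- see the sentence immediately following the proposition. You instead take the lattice-congruence property as the starting point, obtaining it by intersecting the sylvester and $\#$-sylvester congruences via Proposition~\ref{prop:LienSylv}, and then deduce monotonicity by the standard join/meet computation. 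Each step you give checks out: the intersection of lattice congruences is a lattice congruence; $\#$ restricted to $\EnsPermu_n$ is $\sigma \mapsto w_0 \sigma w_0$ and preserves the cover relation of $\OrdPermu$, so $\EquivCoS$ is obtained from $\EquivS$ by conjugation by a lattice automorphism; and the concluding computation ($\nu\uparrow = \sigma \vee \nu\uparrow \EquivBX \sigma\uparrow \vee \nu\uparrow$ together with maximality of $\nu\uparrow$ in its class, and its dual) is correct. What your route buys is brevity and a clean formal skeleton; what it costs is self-containedness, since all the combinatorial content is outsourced to the external fact that $\EquivS$ is a lattice congruence of the weak order --- a fact which is itself usually proved (e.g.\ in~\cite{HNT05-2}) precisely by the interval-plus-monotone-projections route that the paper follows for $\EquivBX$. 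You correctly flag this as the main obstacle; be aware also that under your organization the paper's subsequent deduction ``Propositions~\ref{prop:EquivBXInter} and~\ref{prop:OrdreBaxterMinMax} imply that $\EquivBX$ is a lattice congruence'' becomes circular, so adopting your proof would require restructuring that passage.
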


\subsection{A lattice structure on the set of pairs of twin binary trees}

\begin{Definition}
    For all $n \geq 0$, define the order relation $\OrdBX$ on the set
    $\EnsABJ_n$ setting $J_0 \OrdBX J_1$, where $J_0, J_1 \in \EnsABJ_n$,
    if there exists $\sigma_0, \sigma_1 \in \EnsPermu_n$ such that
    $\PSymb(\sigma_0) = J_0$, $\PSymb(\sigma_1) = J_1$ and
    $\sigma_0 \OrdPermu \sigma_1$.
\end{Definition}

Propositions~\ref{prop:EquivBXInter} and~\ref{prop:OrdreBaxterMinMax} ensure
that this order is well-defined, and in particular that the relation $\OrdBX$
is transitive and antisymmetric.

The pair of twin binary trees $(T_L, T_R)$ is covered by $(T'_L, T'_R) \in \EnsABJ$ if
one of the three following conditions is satisfied:
\begin{enumerate}
    \item $T'_R = T_R$ and $T'_L$ is obtained from $T_L$ by performing a left
    rotation into $T_L$ such that $\Canop(T_L) = \Canop(T'_L)$;
    \item $T'_L = T_L$ and $T'_R$ is obtained from $T_R$ by performing a right
    rotation into $T_R$ such that $\Canop(T_R) = \Canop(T'_R)$;
    \item $T'_L$ (resp. $T'_R$) is obtained by performing a left (resp. right)
    rotation into $T_L$ (resp. $T_R$) such that $\Canop(T_L) \ne \Canop(T'_L)$
    (resp. $\Canop(T_R) \ne \Canop(T'_R)$).
\end{enumerate}

Moreover, it is possible to compare two pairs of twin binary trees $J_0 := (T^0_L, T^0_R)$
and $J_1 := (T^1_L, T^1_R)$ very easily by computing the \emph{Tamari vector}
(see~\cite{K06}) of each binary tree. Indeed, we have $J_0 \OrdBX J_1$ iff
the Tamari vector of $T^0_L$ (resp. $T^0_R$) is greater (resp. smaller)
component by component than the Tamari vector of $T^1_L$ (resp. $T^1_R$).

Propositions~\ref{prop:EquivBXInter} and~\ref{prop:OrdreBaxterMinMax} implies
that that $\EquivBX$ is also a lattice congruence~\cite{CS98, R05}. Thus, since
the permutohedron is a lattice,

\begin{Proposition}
    For all $n \geq 0$, the poset $(\EnsABJ_n, \OrdBX)$ is a lattice.
\end{Proposition}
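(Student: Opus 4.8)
The plan is to exhibit $(\EnsABJ_n, \OrdBX)$ as the quotient of the permutohedron lattice $(\EnsPermu_n, \OrdPermu)$ by the congruence $\EquivBX$, and then to invoke the classical fact that the quotient of a finite lattice by a lattice congruence is again a lattice~\cite{CS98, R05}. By Proposition~\ref{prop:PSymboleClasses} together with the remark following it, the map sending $\sigma$ to the shape of $\PSymb(\sigma)$ induces a bijection between $\EnsABJ_n$ and the set $\EnsPermu_n /_\EquivBX$ of equivalence classes, identifying a pair of twin binary trees $J$ with the class $C_J := \{\sigma \in \EnsPermu_n : \PSymb(\sigma) = J\}$. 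It therefore suffices to endow $\EnsPermu_n /_\EquivBX$ with the quotient lattice structure and to verify that the resulting order is $\OrdBX$.

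First I would confirm that $\EquivBX$ is a lattice congruence of the permutohedron. By Proposition~\ref{prop:EquivBXInter} each class is an interval, so the projections $\sigma \mapsto \sigma\downarrow$ and $\sigma \mapsto \sigma\uparrow$ onto the minimum and the maximum of the class are well-defined; by Proposition~\ref{prop:OrdreBaxterMinMax} both are order-preserving. Since, on a finite lattice, these two conditions characterize lattice congruences, $\EquivBX$ is one. I would then identify $\OrdBX$ with the induced quotient order: by definition $J_0 \OrdBX J_1$ means that $C_{J_0}$ and $C_{J_1}$ admit representatives $\sigma_0 \OrdPermu \sigma_1$, and since $\downarrow$ is order-preserving this is equivalent to $\sigma_0\downarrow \OrdPermu \sigma_1\downarrow$ for the minima $\sigma_0\downarrow, \sigma_1\downarrow$ of the two classes. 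Hence $J \mapsto \sigma\downarrow$ is an order isomorphism from $(\EnsABJ_n, \OrdBX)$ onto the induced subposet $\{\sigma\downarrow : \sigma \in \EnsPermu_n\}$ of the permutohedron, which is exactly the order carried by $\EnsPermu_n /_\EquivBX$.

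Finally I would build the lattice operations: for classes $C_0, C_1$ with arbitrary representatives $\sigma_i \in C_i$, set $C_0 \wedge C_1 := C_{\sigma_0 \wedge \sigma_1}$ and $C_0 \vee C_1 := C_{\sigma_0 \vee \sigma_1}$, the operations on the right being those of the permutohedron. The congruence property makes these classes independent of the chosen representatives, and the monotonicity of the projections then yields that they are respectively the greatest lower bound and the least upper bound for $\OrdBX$. The main obstacle, and the only place where genuine content enters, is the identification of $\OrdBX$ with the projection-induced quotient order: a priori the relation ``to admit comparable representatives'' need neither be transitive nor agree with comparability of the minima $\sigma\downarrow$. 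This is exactly what Propositions~\ref{prop:EquivBXInter} and~\ref{prop:OrdreBaxterMinMax} resolve, and it is the same content already invoked to assert that $\OrdBX$ is a well-defined partial order; once this identification is secured, the lattice structure descends from that of the permutohedron with no further computation.
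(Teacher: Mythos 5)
Your proposal is correct and follows essentially the same route as the paper, which likewise deduces from Propositions~\ref{prop:EquivBXInter} and~\ref{prop:OrdreBaxterMinMax} that $\EquivBX$ is a lattice congruence of the permutohedron (citing~\cite{CS98, R05}) and concludes that the quotient poset, identified with $(\EnsABJ_n, \OrdBX)$ via the $\PSymb$-symbol, is a lattice. You merely spell out the standard characterization of lattice congruences and the descent of meet and join, which the paper leaves implicit.
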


\section{The Baxter Hopf Algebra} \label{sec:AlgebreHopfBaxter}

In the sequel, all the algebraic structures have a field of characteristic
zero $\K$ as ground field.

\subsection{\texorpdfstring{The Hopf algebra $\FQSym$}
                           {The Hopf algebra FQSym}}
Recall that the family $\left\{\F_\sigma\right\}_{\sigma \in \EnsPermu}$
form the \emph{fundamental} basis of $\FQSym$~\cite{DHT02}. Its product
and its coproduct are defined by:
\begin{equation}
    \F_\sigma \Prod \F_\nu := \sum_{\pi \in \sigma \cshuffle \nu} \F_\pi, \hspace{4em}
    \Delta \left(\F_\sigma\right) := \sum_{0 \leq i \leq |\sigma|} \F_{\Std(\sigma_1 \ldots \sigma_i)} \Tenseur \F_{\Std(\sigma_{i + 1} \ldots \sigma_{|\sigma|})}.
\end{equation}

The following theorem due to Hivert and Nzeutchap~\cite{HN07} shows that
an equivalence relation on $A^*$ satisfying some properties can be used
to define Hopf subalgebras of $\FQSym$:
\begin{Theoreme} \label{thm:HivertJanvier}
    Let $\equiv$ be an equivalence relation defined on $A^*$. If $\equiv$ is
    a congruence, compatible with the restriction of alphabet intervals
    and compatible with the destandardization process, then, the family
    $\left\{ \PP_{\widehat{\sigma}} \right\}_{\widehat{\sigma} \in \EnsPermu/_\equiv}$
    defined by:
    \begin{equation}
        \PP_{\widehat{\sigma}} := \sum_{\sigma \in \widehat{\sigma}} \F_\sigma \label{eq:EquivFQSym}
    \end{equation}
    spans a Hopf subalgebra of $\FQSym$.
\end{Theoreme}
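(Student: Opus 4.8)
The plan is to show that the linear span $\mathcal{P} := \mathrm{span}\{\PP_{\widehat{\sigma}}\}$ is a sub-bialgebra of $\FQSym$; since the $\PP_{\widehat{\sigma}}$ are sums over pairwise disjoint subfamilies of the basis $\{\F_\sigma\}$, they are linearly independent and form a basis of $\mathcal{P}$. As $\FQSym$ is graded by the size of permutations, is connected, and $\mathcal{P}$ is a graded subspace containing the unit ($\PP_{\widehat{\emptyset}} = \F_\emptyset$), it suffices to prove that $\mathcal{P}$ is stable under the product $\Prod$ and the coproduct $\Delta$; the antipode then restricts automatically, yielding a Hopf subalgebra. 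The two stability statements are proved separately, and each relies on a different subset of the three hypotheses.

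For the product, I would first record the restriction description of the shifted shuffle: a permutation $\pi \in \EnsPermu_{n+m}$ belongs to $\sigma \cshuffle \nu$ (with $n = |\sigma|$, $m = |\nu|$) if and only if $\pi_{|\{1, \ldots, n\}} = \sigma$ and $\Std(\pi_{|\{n+1, \ldots, n+m\}}) = \nu$, and moreover the pair $(\sigma, \nu)$ is recovered from $\pi$ by these two restrictions. Consequently $\PP_{\widehat{\sigma}} \Prod \PP_{\widehat{\nu}} = \sum_{\pi \in M} \F_\pi$, where $M$ is the multiplicity-free set of those $\pi$ with $\pi_{|\{1, \ldots, n\}} \equiv \sigma$ and $\Std(\pi_{|\{n+1, \ldots, n+m\}}) \equiv \nu$. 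It then remains to check that $M$ is a union of $\equiv$-classes: if $\pi \in M$ and $\pi \equiv \rho$, then compatibility with the restriction of alphabet intervals applied to $\{1, \ldots, n\}$ gives $\rho_{|\{1, \ldots, n\}} \equiv \pi_{|\{1, \ldots, n\}} \equiv \sigma$, while the same compatibility applied to $\{n+1, \ldots, n+m\}$ followed by the forward direction of compatibility with destandardization gives $\Std(\rho_{|\{n+1, \ldots, n+m\}}) \equiv \nu$; hence $\rho \in M$. Thus the product is a sum of full $\PP_{\widehat{\pi}}$ and lies in $\mathcal{P}$.

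For the coproduct, write $\Delta(\PP_{\widehat{\sigma}}) = \sum_{\alpha, \beta} c_{\alpha, \beta}\, \F_\alpha \Tenseur \F_\beta$, where $c_{\alpha, \beta}$ counts the pairs $(\sigma', i)$ with $\sigma' \equiv \sigma$, $\Std(\sigma'_1 \ldots \sigma'_i) = \alpha$ and $\Std(\sigma'_{i+1} \ldots \sigma'_n) = \beta$. The goal is to show $c_{\alpha, \beta}$ depends only on the classes $\widehat{\alpha}$ and $\widehat{\beta}$, for then the terms regroup as $\sum c_{\widehat{\alpha}, \widehat{\beta}}\, \PP_{\widehat{\alpha}} \Tenseur \PP_{\widehat{\beta}} \in \mathcal{P} \Tenseur \mathcal{P}$. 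I would prove $c_{\alpha, \beta} = c_{\alpha', \beta'}$ whenever $\alpha \equiv \alpha'$ and $\beta \equiv \beta'$ by an explicit bijection on witnesses: a witness $\sigma' = p\,s$ splits, after position $i = |\alpha|$, into a prefix $p$ and a suffix $s$ occupying complementary value sets $S$ and $\{1, \ldots, n\} \setminus S$, with $\Std(p) = \alpha$ and $\Std(s) = \beta$; replace $p$ and $s$ by the unique words $p'$ and $s'$ on the same value sets $S$ and $\{1, \ldots, n\} \setminus S$ with $\Std(p') = \alpha'$ and $\Std(s') = \beta'$. Since $\alpha \equiv \alpha'$ with $\Eval(p) = \Eval(p')$ (both being the indicator of $S$), the backward direction of compatibility with destandardization yields $p \equiv p'$, and likewise $s \equiv s'$; the congruence property then gives $p\,s \equiv p'\,s'$, so $\sigma'' := p'\,s' \equiv \sigma' \equiv \sigma$. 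This map preserves $i$ and the value set $S$, is clearly invertible, and hence establishes the desired equality of coefficients.

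The coproduct step is the only genuinely delicate point. Unlike the product, it is not multiplicity-free, so one cannot argue merely by closing a support under $\equiv$; the bijective matching of witnesses is needed, and it is precisely here that both the congruence property (to recombine $p'$ and $s'$ into an equivalent permutation) and the backward direction of the destandardization compatibility (to lift the equivalences $\alpha \equiv \alpha'$ and $\beta \equiv \beta'$ back to the words $p$ and $s$) are indispensable. A minor point to handle with care is that the bijection must keep the value-set split $S$ fixed, so that $p'\,s'$ is again a genuine permutation of $\{1, \ldots, n\}$; once this is checked, the regrouping into $\PP_{\widehat{\alpha}} \Tenseur \PP_{\widehat{\beta}}$ is immediate, and the graded-connected argument finishes the proof.
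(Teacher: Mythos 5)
Your argument is correct. Note that the paper itself does not prove this statement: it is quoted as a theorem of Hivert and Nzeutchap (reference [HN07]), so there is no internal proof to compare against. Your proof is the standard one for results of this type and, in substance, the expected argument: the product side uses that the shifted shuffle is multiplicity-free and that membership in the support is detected by the two alphabet-interval restrictions (plus the forward direction of destandardization compatibility), so the support is a union of classes; the coproduct side, which is indeed the delicate point since multiplicities occur, is handled by your witness bijection $p\,s \mapsto p'\,s'$, which correctly combines the backward direction of destandardization compatibility with the congruence property; and the graded-connected argument upgrades the sub-bialgebra to a Hopf subalgebra. I see no gap.
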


\subsection{\texorpdfstring{The Hopf algebra $\Baxter$}
                           {The Hopf algebra Baxter}}
By definition, $\EquivBX$ is a congruence, and, by Proposition~\ref{prop:CompRestrSegmAlph}
and~\ref{prop:CompDestd}, $\EquivBX$ checks the conditions of Theorem~\ref{thm:HivertJanvier}.
Moreover, by Proposition~\ref{prop:PSymboleClasses}, the $\EquivBX$-equivalence
classes of permutations can be encoded by pairs of unlabeled twin binary trees.
Hence, we have the following theorem:
\begin{Theoreme} \label{thm:AlgebreHopfBaxter}
    The family  $\left\{\PP_J\right\}_{J \in \EnsABJ}$ defined by:
    \begin{equation}
        \PP_J := \sum_{\substack{\sigma \in \EnsPermu \\ \PSymb(\sigma) = J}} \F_\sigma
    \end{equation}
    spans a Hopf subalgebra of $\FQSym$, namely the Hopf algebra $\Baxter$.
\end{Theoreme}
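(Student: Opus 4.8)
The plan is to verify that the Baxter congruence $\EquivBX$ satisfies all hypotheses of Theorem~\ref{thm:HivertJanvier}, and then to reinterpret the resulting abstract basis in terms of pairs of twin binary trees. Theorem~\ref{thm:HivertJanvier} is the engine here: it manufactures a Hopf subalgebra of $\FQSym$ out of any equivalence relation on $A^*$ that is simultaneously (a)~a congruence, (b)~compatible with the restriction of alphabet intervals, and (c)~compatible with the destandardization process. So the proof reduces to checking these three conditions and then translating the indexing set.

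First I would assemble the three hypotheses, each of which has already been established earlier in the excerpt. Condition~(a) holds because $\EquivBX$ is defined in Definition~\ref{def:MonoideBaxter} as the transitive closure of the adjacency relations $\AdjBXA$ and $\AdjBXB$, hence as a quotient of the free monoid it is by construction a congruence. Condition~(b) is exactly Proposition~\ref{prop:CompRestrSegmAlph}, and condition~(c) is exactly Proposition~\ref{prop:CompDestd}. With these in hand, Theorem~\ref{thm:HivertJanvier} applies verbatim and yields that the family $\left\{\PP_{\widehat{\sigma}}\right\}_{\widehat{\sigma} \in \EnsPermu/_{\EquivBX}}$, where $\PP_{\widehat{\sigma}} := \sum_{\sigma \in \widehat{\sigma}} \F_\sigma$, spans a Hopf subalgebra of $\FQSym$.

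The remaining task is purely a matter of re-indexing. By Proposition~\ref{prop:PSymboleClasses}, two permutations $\sigma$ and $\nu$ satisfy $\sigma \EquivBX \nu$ if and only if $\PSymb(\sigma) = \PSymb(\nu)$, so the $\PSymb$-symbol is a complete invariant of each $\EquivBX$-equivalence class of permutations. Moreover, Proposition~\ref{prop:PSymboleCoupleJumeaux} guarantees that the shape of $\PSymb(\sigma)$ is a genuine pair of twin binary trees, and the remark following Proposition~\ref{prop:PSymboleClasses} notes that each such pair of unlabeled twin binary trees determines its unique binary-search-tree labeling. Consequently the map sending each class $\widehat{\sigma}$ to the shape $J := \PSymb(\sigma) \in \EnsABJ$ is a bijection between $\EnsPermu/_{\EquivBX}$ and $\EnsABJ$. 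Transporting the basis along this bijection identifies $\PP_{\widehat{\sigma}}$ with $\PP_J = \sum_{\PSymb(\sigma) = J} \F_\sigma$, so the family $\left\{\PP_J\right\}_{J \in \EnsABJ}$ is precisely the basis produced by Theorem~\ref{thm:HivertJanvier}, and therefore spans a Hopf subalgebra of $\FQSym$.

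I do not anticipate a genuine obstacle, since every ingredient has been proved upstream; the entire argument is an assembly step. The one point that deserves care, rather than difficulty, is confirming that the re-indexing by $\EnsABJ$ is literally a bijection and not merely a surjection onto the set of realized shapes: this is where Theorem~\ref{thm:EquivBXBaxter} (each class contains exactly one Baxter permutation) and the Dulucq--Guibert bijection between Baxter permutations and pairs of twin binary trees silently guarantee that every element of $\EnsABJ$ arises as some $\PSymb(\sigma)$, so no index $J$ yields an empty sum. Making that surjectivity explicit is the only place where one must invoke a nontrivial earlier result rather than a formal definition.
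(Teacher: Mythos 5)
Your proposal is correct and follows essentially the same route as the paper: verify that $\EquivBX$ is a congruence compatible with restriction of alphabet intervals (Proposition~\ref{prop:CompRestrSegmAlph}) and with destandardization (Proposition~\ref{prop:CompDestd}), apply Theorem~\ref{thm:HivertJanvier}, and re-index the classes by pairs of twin binary trees via Proposition~\ref{prop:PSymboleClasses}. Your additional remark on the surjectivity of the re-indexing onto $\EnsABJ$ is a point the paper leaves implicit, but it does not change the argument.
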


The Hilbert series of $\Baxter$ is $B(z) := 1 + z + 2z^2 + 6z^3 + 22z^4 + 92z^5 + 422z^6 + 2074z^7 + 10754z^8 + 58202z^9 + 326240z^{10} + 1882960z^{11} + \ldots$,
the generating series of Baxter permutations (sequence \Sloane{A001181} of \cite{SLOANE}).

One has for example,
\begin{equation}
    \PP_{\scalebox{0.15}{
        \begin{tikzpicture}
            \node[Noeud](0)at(0,0){};
            \node[Noeud](1)at(1,-1){};
            \draw[Arete](0)--(1);
        \end{tikzpicture}
        \begin{tikzpicture}
            \node[Noeud](0)at(0,-1){};
            \node[Noeud](1)at(1,0){};
            \draw[Arete](1)--(0);
        \end{tikzpicture}
    }}
    =
    \F_{12},
    \hspace{1.5em}
    \PP_{\scalebox{0.15}{
        \begin{tikzpicture}
            \node[Noeud](0)at(0,-1){};
            \node[Noeud](1)at(1,0){};
            \draw[Arete](1)--(0);
            \node[Noeud](2)at(2,-2){};
            \node[Noeud](3)at(3,-1){};
            \draw[Arete](3)--(2);
            \draw[Arete](1)--(3);
        \end{tikzpicture}
        \begin{tikzpicture}
            \node[Noeud](0)at(0,-1){};
            \node[Noeud](1)at(1,-2){};
            \draw[Arete](0)--(1);
            \node[Noeud](2)at(2,0){};
            \draw[Arete](2)--(0);
            \node[Noeud](3)at(3,-1){};
            \draw[Arete](2)--(3);
        \end{tikzpicture}
    }}
    =
    \F_{2143} + \F_{2413},
    \hspace{1.5em}
    \PP_{\scalebox{0.15}{
        \begin{tikzpicture}
            \node[Noeud](0)at(0,-3){};
            \node[Noeud](1)at(1,-2){};
            \draw[Arete](1)--(0);
            \node[Noeud](2)at(2,-3){};
            \draw[Arete](1)--(2);
            \node[Noeud](3)at(3,-1){};
            \draw[Arete](3)--(1);
            \node[Noeud](4)at(4,0){};
            \draw[Arete](4)--(3);
            \node[Noeud](5)at(5,-1){};
            \draw[Arete](4)--(5);
        \end{tikzpicture}
        \begin{tikzpicture}
        \node[Noeud](0)at(0,-1){};
        \node[Noeud](1)at(1,-2){};
        \draw[Arete](0)--(1);
        \node[Noeud](2)at(2,0){};
        \draw[Arete](2)--(0);
        \node[Noeud](3)at(3,-2){};
        \node[Noeud](4)at(4,-3){};
        \draw[Arete](3)--(4);
        \node[Noeud](5)at(5,-1){};
        \draw[Arete](5)--(3);
        \draw[Arete](2)--(5);
        \end{tikzpicture}
    }}
    =
    \F_{542163} + \F_{542613} + \F_{546213}.
\end{equation}

By Theorem~\ref{thm:HivertJanvier}, the product of $\Baxter$ is well-defined.
We deduce it from the product of $\FQSym$ and obtain
\begin{equation}
    \PP_{J_0} \Prod \PP_{J_1} = \sum_{\substack{\PSymb(\sigma) = J_0, \hspace{.3em}
                                                \PSymb(\nu) = J_1 \\
                                                \pi \hspace{.3em} \in \hspace{.3em} \sigma \cshuffle \nu \hspace{.3em} \cap \hspace{.3em} \EnsPermuBX}}
                                \PP_{\PSymb(\pi)}.
\end{equation}
For example,
\begin{equation}
\begin{split}
    \PP_{\scalebox{0.15}{
        \begin{tikzpicture}
            \node[Noeud](0)at(0,-1){};
            \node[Noeud](1)at(1,-2){};
            \draw[Arete](0)--(1);
            \node[Noeud](2)at(2,0){};
            \draw[Arete](2)--(0);
        \end{tikzpicture}
        \begin{tikzpicture}
            \node[Noeud](0)at(0,-1){};
            \node[Noeud](1)at(1,0){};
            \draw[Arete](1)--(0);
            \node[Noeud](2)at(2,-1){};
            \draw[Arete](1)--(2);
        \end{tikzpicture}
    }}
    \Prod
    \PP_{\scalebox{0.15}{
        \begin{tikzpicture}
            \node[Noeud](0)at(0,0){};
            \node[Noeud](1)at(1,-1){};
            \draw[Arete](0)--(1);
        \end{tikzpicture}
        \begin{tikzpicture}
            \node[Noeud](0)at(0,-1){};
            \node[Noeud](1)at(1,0){};
            \draw[Arete](1)--(0);
        \end{tikzpicture}
    }}
    & =
    \PP_{\scalebox{0.15}{
        \begin{tikzpicture}
            \node[Noeud](0)at(0,-1){};
            \node[Noeud](1)at(1,-2){};
            \draw[Arete](0)--(1);
            \node[Noeud](2)at(2,0){};
            \draw[Arete](2)--(0);
            \node[Noeud](3)at(3,-1){};
            \node[Noeud](4)at(4,-2){};
            \draw[Arete](3)--(4);
            \draw[Arete](2)--(3);
        \end{tikzpicture}
        \begin{tikzpicture}
            \node[Noeud](0)at(0,-3){};
            \node[Noeud](1)at(1,-2){};
            \draw[Arete](1)--(0);
            \node[Noeud](2)at(2,-3){};
            \draw[Arete](1)--(2);
            \node[Noeud](3)at(3,-1){};
            \draw[Arete](3)--(1);
            \node[Noeud](4)at(4,0){};
            \draw[Arete](4)--(3);
        \end{tikzpicture}
    }}
    +
    \PP_{\scalebox{0.15}{
        \begin{tikzpicture}
            \node[Noeud](0)at(0,-1){};
            \node[Noeud](1)at(1,-2){};
            \draw[Arete](0)--(1);
            \node[Noeud](2)at(2,0){};
            \draw[Arete](2)--(0);
            \node[Noeud](3)at(3,-1){};
            \node[Noeud](4)at(4,-2){};
            \draw[Arete](3)--(4);
            \draw[Arete](2)--(3);
        \end{tikzpicture}
        \begin{tikzpicture}
            \node[Noeud](0)at(0,-2){};
            \node[Noeud](1)at(1,-1){};
            \draw[Arete](1)--(0);
            \node[Noeud](2)at(2,-3){};
            \node[Noeud](3)at(3,-2){};
            \draw[Arete](3)--(2);
            \draw[Arete](1)--(3);
            \node[Noeud](4)at(4,0){};
            \draw[Arete](4)--(1);
        \end{tikzpicture}
    }}
    +
    \PP_{\scalebox{0.15}{
        \begin{tikzpicture}
            \node[Noeud](0)at(0,-1){};
            \node[Noeud](1)at(1,-2){};
            \draw[Arete](0)--(1);
            \node[Noeud](2)at(2,0){};
            \draw[Arete](2)--(0);
            \node[Noeud](3)at(3,-1){};
            \node[Noeud](4)at(4,-2){};
            \draw[Arete](3)--(4);
            \draw[Arete](2)--(3);
        \end{tikzpicture}
        \begin{tikzpicture}
            \node[Noeud](0)at(0,-1){};
            \node[Noeud](1)at(1,0){};
            \draw[Arete](1)--(0);
            \node[Noeud](2)at(2,-3){};
            \node[Noeud](3)at(3,-2){};
            \draw[Arete](3)--(2);
            \node[Noeud](4)at(4,-1){};
            \draw[Arete](4)--(3);
            \draw[Arete](1)--(4);
        \end{tikzpicture}
    }} \\
    & +
    \PP_{\scalebox{0.15}{
        \begin{tikzpicture}
            \node[Noeud](0)at(0,-2){};
            \node[Noeud](1)at(1,-3){};
            \draw[Arete](0)--(1);
            \node[Noeud](2)at(2,-1){};
            \draw[Arete](2)--(0);
            \node[Noeud](3)at(3,0){};
            \draw[Arete](3)--(2);
            \node[Noeud](4)at(4,-1){};
            \draw[Arete](3)--(4);
        \end{tikzpicture}
        \begin{tikzpicture}
            \node[Noeud](0)at(0,-2){};
            \node[Noeud](1)at(1,-1){};
            \draw[Arete](1)--(0);
            \node[Noeud](2)at(2,-2){};
            \node[Noeud](3)at(3,-3){};
            \draw[Arete](2)--(3);
            \draw[Arete](1)--(2);
            \node[Noeud](4)at(4,0){};
            \draw[Arete](4)--(1);
        \end{tikzpicture}
    }}
    +
    \PP_{\scalebox{0.15}{
        \begin{tikzpicture}
            \node[Noeud](0)at(0,-2){};
            \node[Noeud](1)at(1,-3){};
            \draw[Arete](0)--(1);
            \node[Noeud](2)at(2,-1){};
            \draw[Arete](2)--(0);
            \node[Noeud](3)at(3,0){};
            \draw[Arete](3)--(2);
            \node[Noeud](4)at(4,-1){};
            \draw[Arete](3)--(4);
        \end{tikzpicture}
        \begin{tikzpicture}
            \node[Noeud](0)at(0,-1){};
            \node[Noeud](1)at(1,0){};
            \draw[Arete](1)--(0);
            \node[Noeud](2)at(2,-2){};
            \node[Noeud](3)at(3,-3){};
            \draw[Arete](2)--(3);
            \node[Noeud](4)at(4,-1){};
            \draw[Arete](4)--(2);
            \draw[Arete](1)--(4);
        \end{tikzpicture}
    }}
    +
    \PP_{\scalebox{0.15}{
        \begin{tikzpicture}
            \node[Noeud](0)at(0,-2){};
            \node[Noeud](1)at(1,-3){};
            \draw[Arete](0)--(1);
            \node[Noeud](2)at(2,-1){};
            \draw[Arete](2)--(0);
            \node[Noeud](3)at(3,0){};
            \draw[Arete](3)--(2);
            \node[Noeud](4)at(4,-1){};
            \draw[Arete](3)--(4);
        \end{tikzpicture}
        \begin{tikzpicture}
            \node[Noeud](0)at(0,-1){};
            \node[Noeud](1)at(1,0){};
            \draw[Arete](1)--(0);
            \node[Noeud](2)at(2,-1){};
            \node[Noeud](3)at(3,-3){};
            \node[Noeud](4)at(4,-2){};
            \draw[Arete](4)--(3);
            \draw[Arete](2)--(4);
            \draw[Arete](1)--(2);
        \end{tikzpicture}
    }}.
\end{split}
\end{equation}

In the same way, we deduce the coproduct of $\Baxter$ from the coproduct
of $\FQSym$ and obtain
\begin{equation}
    \Delta (\PP_J) = \sum_{\substack{\PSymb(\pi) = J \\ \pi = u.v \\
                                     \sigma := \Std(u), \hspace{.3em} \nu := \Std(v) \in \EnsPermuBX}}
                     \PP_{\PSymb(\sigma)} \Tenseur \PP_{\PSymb(\nu)}.
\end{equation}
For example,
\begin{equation}
\begin{split}
    \Delta
    \PP_{\scalebox{0.15}{
        \begin{tikzpicture}
            \node[Noeud](0)at(0,-1){};
            \node[Noeud](1)at(1,0){};
            \draw[Arete](1)--(0);
            \node[Noeud](2)at(2,-2){};
            \node[Noeud](3)at(3,-1){};
            \draw[Arete](3)--(2);
            \draw[Arete](1)--(3);
        \end{tikzpicture}
        \begin{tikzpicture}
            \node[Noeud](0)at(0,-1){};
            \node[Noeud](1)at(1,-2){};
            \draw[Arete](0)--(1);
            \node[Noeud](2)at(2,0){};
            \draw[Arete](2)--(0);
            \node[Noeud](3)at(3,-1){};
            \draw[Arete](2)--(3);
        \end{tikzpicture}
    }}
    & =
    1
    \Tenseur
    \PP_{\scalebox{0.15}{
        \begin{tikzpicture}
            \node[Noeud](0)at(0,-1){};
            \node[Noeud](1)at(1,0){};
            \draw[Arete](1)--(0);
            \node[Noeud](2)at(2,-2){};
            \node[Noeud](3)at(3,-1){};
            \draw[Arete](3)--(2);
            \draw[Arete](1)--(3);
        \end{tikzpicture}
        \begin{tikzpicture}
            \node[Noeud](0)at(0,-1){};
            \node[Noeud](1)at(1,-2){};
            \draw[Arete](0)--(1);
            \node[Noeud](2)at(2,0){};
            \draw[Arete](2)--(0);
            \node[Noeud](3)at(3,-1){};
            \draw[Arete](2)--(3);
        \end{tikzpicture}
    }}
    +
    \PP_{\scalebox{0.15}{
        \begin{tikzpicture}
            \node[Noeud](0)at(0,0){};
        \end{tikzpicture}
        \begin{tikzpicture}
            \node[Noeud](0)at(0,0){};
        \end{tikzpicture}
    }}
    \Tenseur
    \PP_{\scalebox{0.15}{
        \begin{tikzpicture}
            \node[Noeud](0)at(0,0){};
            \node[Noeud](1)at(1,-2){};
            \node[Noeud](2)at(2,-1){};
            \draw[Arete](2)--(1);
            \draw[Arete](0)--(2);
        \end{tikzpicture}
        \begin{tikzpicture}
            \node[Noeud](0)at(0,-1){};
            \node[Noeud](1)at(1,0){};
            \draw[Arete](1)--(0);
            \node[Noeud](2)at(2,-1){};
            \draw[Arete](1)--(2);
        \end{tikzpicture}
    }}
    +
    \PP_{\scalebox{0.15}{
        \begin{tikzpicture}
            \node[Noeud](0)at(0,0){};
        \end{tikzpicture}
        \begin{tikzpicture}
            \node[Noeud](0)at(0,0){};
        \end{tikzpicture}
    }}
    \Tenseur
    \PP_{\scalebox{0.15}{
        \begin{tikzpicture}
            \node[Noeud](0)at(0,-1){};
            \node[Noeud](1)at(1,-2){};
            \draw[Arete](0)--(1);
            \node[Noeud](2)at(2,0){};
            \draw[Arete](2)--(0);
        \end{tikzpicture}
        \begin{tikzpicture}
            \node[Noeud](0)at(0,-1){};
            \node[Noeud](1)at(1,0){};
            \draw[Arete](1)--(0);
            \node[Noeud](2)at(2,-1){};
            \draw[Arete](1)--(2);
        \end{tikzpicture}
    }}
    +
    \PP_{\scalebox{0.15}{
        \begin{tikzpicture}
            \node[Noeud](0)at(0,0){};
            \node[Noeud](1)at(1,-1){};
            \draw[Arete](0)--(1);
        \end{tikzpicture}
        \begin{tikzpicture}
            \node[Noeud](0)at(0,-1){};
            \node[Noeud](1)at(1,0){};
            \draw[Arete](1)--(0);
        \end{tikzpicture}
    }}
    \Tenseur
    \PP_{\scalebox{0.15}{
        \begin{tikzpicture}
            \node[Noeud](0)at(0,0){};
            \node[Noeud](1)at(1,-1){};
            \draw[Arete](0)--(1);
        \end{tikzpicture}
        \begin{tikzpicture}
            \node[Noeud](0)at(0,-1){};
            \node[Noeud](1)at(1,0){};
            \draw[Arete](1)--(0);
        \end{tikzpicture}
    }} \\
    & +
    \PP_{\scalebox{0.15}{
        \begin{tikzpicture}
            \node[Noeud](0)at(0,-1){};
            \node[Noeud](1)at(1,0){};
            \draw[Arete](1)--(0);
        \end{tikzpicture}
        \begin{tikzpicture}
            \node[Noeud](0)at(0,0){};
            \node[Noeud](1)at(1,-1){};
            \draw[Arete](0)--(1);
        \end{tikzpicture}
    }}
    \Tenseur
    \PP_{\scalebox{0.15}{
            \begin{tikzpicture}
            \node[Noeud](0)at(0,-1){};
            \node[Noeud](1)at(1,0){};
            \draw[Arete](1)--(0);
        \end{tikzpicture}
        \begin{tikzpicture}
            \node[Noeud](0)at(0,0){};
            \node[Noeud](1)at(1,-1){};
            \draw[Arete](0)--(1);
        \end{tikzpicture}
    }}
    +
    \PP_{\scalebox{0.15}{
        \begin{tikzpicture}
            \node[Noeud](0)at(0,-1){};
            \node[Noeud](1)at(1,0){};
            \draw[Arete](1)--(0);
            \node[Noeud](2)at(2,-1){};
            \draw[Arete](1)--(2);
        \end{tikzpicture}
        \begin{tikzpicture}
            \node[Noeud](0)at(0,-1){};
            \node[Noeud](1)at(1,-2){};
            \draw[Arete](0)--(1);
            \node[Noeud](2)at(2,0){};
            \draw[Arete](2)--(0);
        \end{tikzpicture}
    }}
    \Tenseur
    \PP_{\scalebox{0.15}{
        \begin{tikzpicture}
            \node[Noeud](0)at(0,0){};
        \end{tikzpicture}
        \begin{tikzpicture}
            \node[Noeud](0)at(0,0){};
        \end{tikzpicture}
    }}
    +
    \PP_{\scalebox{0.15}{
        \begin{tikzpicture}
            \node[Noeud](0)at(0,-1){};
            \node[Noeud](1)at(1,0){};
            \draw[Arete](1)--(0);
            \node[Noeud](2)at(2,-1){};
            \draw[Arete](1)--(2);
        \end{tikzpicture}
        \begin{tikzpicture}
            \node[Noeud](0)at(0,0){};
            \node[Noeud](1)at(1,-2){};
            \node[Noeud](2)at(2,-1){};
            \draw[Arete](2)--(1);
            \draw[Arete](0)--(2);
        \end{tikzpicture}
    }}
    \Tenseur
    \PP_{\scalebox{0.15}{
        \begin{tikzpicture}
            \node[Noeud](0)at(0,0){};
        \end{tikzpicture}
        \begin{tikzpicture}
            \node[Noeud](0)at(0,0){};
        \end{tikzpicture}
    }}
    +
    \PP_{\scalebox{0.15}{
        \begin{tikzpicture}
            \node[Noeud](0)at(0,-1){};
            \node[Noeud](1)at(1,0){};
            \draw[Arete](1)--(0);
            \node[Noeud](2)at(2,-2){};
            \node[Noeud](3)at(3,-1){};
            \draw[Arete](3)--(2);
            \draw[Arete](1)--(3);
        \end{tikzpicture}
        \begin{tikzpicture}
            \node[Noeud](0)at(0,-1){};
            \node[Noeud](1)at(1,-2){};
            \draw[Arete](0)--(1);
            \node[Noeud](2)at(2,0){};
            \draw[Arete](2)--(0);
            \node[Noeud](3)at(3,-1){};
            \draw[Arete](2)--(3);
        \end{tikzpicture}
    }}
    \Tenseur
    1.
\end{split}
\end{equation}

\begin{Remarque}
    It is well-known that the Hopf algebra $\PBT$~\cite{LR98, HNT05-2} is a
    Hopf subalgebra of $\FQSym$. Besides, we have the following sequence
    of injective Hopf maps:
    \begin{equation}
        \PBT \overset{\rho}{\hookrightarrow} \Baxter \hookrightarrow \FQSym.
    \end{equation}
    Indeed, by Proposition~\ref{prop:LienSylv}, every $\EquivS$-equivalence
    class is an union of some $\EquivBX$-equivalence classes. Denoting by
    $\{\PP_T\}_{T \in \EnsAB}$ the basis of $\PBT$ defined in accordance with
    (\ref{eq:EquivFQSym}) by the sylvester equivalence relation $\EquivS$, we have
    \begin{equation}
        \rho \left(\PP_T\right) = \sum_{\substack{T' \in \EnsAB \\ J := (T', T) \in \EnsABJ}} \PP_J.
    \end{equation}
    For example,
    \begin{align}
        \rho \left(\PP_{\scalebox{0.15}{
            \begin{tikzpicture}
                \node[Noeud,Marque1](0)at(0,-2){};
                \node[Noeud,Marque1](1)at(1,-1){};
                \draw[Arete](1)--(0);
                \node[Noeud,Marque1](2)at(2,0){};
                \draw[Arete](2)--(1);
                \node[Noeud,Marque1](3)at(3,-2){};
                \node[Noeud,Marque1](4)at(4,-1){};
                \draw[Arete](4)--(3);
                \draw[Arete](2)--(4);
            \end{tikzpicture}
        }}\right)
        & =
        \PP_{\scalebox{0.15}{
            \begin{tikzpicture}
                \node[Noeud](0)at(0,0){};
                \node[Noeud](1)at(1,-2){};
                \node[Noeud](2)at(2,-3){};
                \draw[Arete](1)--(2);
                \node[Noeud](3)at(3,-1){};
                \draw[Arete](3)--(1);
                \node[Noeud](4)at(4,-2){};
                \draw[Arete](3)--(4);
                \draw[Arete](0)--(3);
            \end{tikzpicture}
            \begin{tikzpicture}
                \node[Noeud,Marque1](0)at(0,-2){};
                \node[Noeud,Marque1](1)at(1,-1){};
                \draw[Arete](1)--(0);
                \node[Noeud,Marque1](2)at(2,0){};
                \draw[Arete](2)--(1);
                \node[Noeud,Marque1](3)at(3,-2){};
                \node[Noeud,Marque1](4)at(4,-1){};
                \draw[Arete](4)--(3);
                \draw[Arete](2)--(4);
            \end{tikzpicture}
        }}
        +
        \PP_{\scalebox{0.15}{
            \begin{tikzpicture}
                \node[Noeud](0)at(0,-1){};
                \node[Noeud](1)at(1,-2){};
                \node[Noeud](2)at(2,-3){};
                \draw[Arete](1)--(2);
                \draw[Arete](0)--(1);
                \node[Noeud](3)at(3,0){};
                \draw[Arete](3)--(0);
                \node[Noeud](4)at(4,-1){};
                \draw[Arete](3)--(4);
            \end{tikzpicture}
            \begin{tikzpicture}
                \node[Noeud,Marque1](0)at(0,-2){};
                \node[Noeud,Marque1](1)at(1,-1){};
                \draw[Arete](1)--(0);
                \node[Noeud,Marque1](2)at(2,0){};
                \draw[Arete](2)--(1);
                \node[Noeud,Marque1](3)at(3,-2){};
                \node[Noeud,Marque1](4)at(4,-1){};
                \draw[Arete](4)--(3);
                \draw[Arete](2)--(4);
            \end{tikzpicture}
        }}
        +
        \PP_{\scalebox{0.15}{
            \begin{tikzpicture}
                \node[Noeud](0)at(0,0){};
                \node[Noeud](1)at(1,-1){};
                \node[Noeud](2)at(2,-3){};
                \node[Noeud](3)at(3,-2){};
                \draw[Arete](3)--(2);
                \node[Noeud](4)at(4,-3){};
                \draw[Arete](3)--(4);
                \draw[Arete](1)--(3);
                \draw[Arete](0)--(1);
            \end{tikzpicture}
            \begin{tikzpicture}
                \node[Noeud,Marque1](0)at(0,-2){};
                \node[Noeud,Marque1](1)at(1,-1){};
                \draw[Arete](1)--(0);
                \node[Noeud,Marque1](2)at(2,0){};
                \draw[Arete](2)--(1);
                \node[Noeud,Marque1](3)at(3,-2){};
                \node[Noeud,Marque1](4)at(4,-1){};
                \draw[Arete](4)--(3);
                \draw[Arete](2)--(4);
            \end{tikzpicture}
        }}.
    \end{align}
\end{Remarque}

\subsection{Multiplicative bases}
Define the \emph{elementary} family $\left\{\E_J\right\}_{J \in \EnsABJ}$
and the \emph{homogeneous} family $\left\{\HH_J\right\}_{J \in \EnsABJ}$
respectively by:
\begin{equation}
    \E_J := \sum_{J \OrdBX J'} \PP_{J'} \hspace{2em} \text{and} \hspace{2em}
    \HH_J := \sum_{J' \OrdBX J} \PP_{J'}.
\end{equation}
These families are bases of $\Baxter$ since they are defined by triangularity.

Let $J_0 := (T^0_L, T^0_R)$ and $J_1 := (T^1_L, T^1_R)$ be two pairs of twin
binary trees. Let us define the pair of twin binary trees $J_0 \Over J_1$ by
$J_0 \Over J_1 := (T^0_L \Under T^1_L, T^0_R \Over T^1_R)$. In the same way,
the pair of twin binary trees $J_0 \Under J_1$ is defined by
$J_0 \Under J_1 := (T^0_L \Over T^1_L, T^0_R \Under T^1_R)$.

Using the multiplicative bases of $\FQSym$, we establish the following proposition:

\begin{Proposition} \label{prop:BasesMult}
    For all $J_0, J_1 \in \EnsABJ$, we have
    \begin{equation}
        \E_{J_0} \Prod \E_{J_1}   = \E_{J_0 \Over J_1} \hspace{2em} \text{and} \hspace{2em}
        \HH_{J_0} \Prod \HH_{J_1} = \HH_{J_0 \Under J_1}.
    \end{equation}
\end{Proposition}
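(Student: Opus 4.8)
The plan is to transport the entire computation into $\FQSym$, where the analogous multiplicative bases are classical. Recall that $\FQSym$ carries the \emph{elementary} and \emph{homogeneous} bases $\E^\sigma := \sum_{\sigma \OrdPermu \alpha} \F_\alpha$ and $\HH^\sigma := \sum_{\alpha \OrdPermu \sigma} \F_\alpha$, and that these are multiplicative, with $\E^\sigma \Prod \E^\nu = \E^{\sigma \Over \nu}$ and $\HH^\sigma \Prod \HH^\nu = \HH^{\sigma \Under \nu}$. For $J \in \EnsABJ$, write $J\downarrow$ (resp. $J\uparrow$) for the minimal (resp. maximal) permutation, for the permutohedron order, of the $\EquivBX$-class encoded by $J$; these exist because, by Proposition~\ref{prop:EquivBXInter}, each class is the interval $[J\downarrow, J\uparrow]$.

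First I would identify the Baxter bases with $\FQSym$ ones. Expanding $\E_J = \sum_{J \OrdBX J'} \PP_{J'}$ in the $\F$-basis gives $\E_J = \sum_{\sigma :\, J \OrdBX \PSymb(\sigma)} \F_\sigma$. From the definition of $\OrdBX$ together with Proposition~\ref{prop:OrdreBaxterMinMax} (which says precisely that $\sigma \mapsto \sigma\downarrow$ is monotone), one checks that $J \OrdBX J'$ iff $J\downarrow \OrdPermu J'\downarrow$; applying this with $J' = \PSymb(\sigma)$, whose minimum is $\sigma\downarrow$, and using $\sigma\downarrow \OrdPermu \sigma$, one obtains that $J \OrdBX \PSymb(\sigma)$ holds iff $J\downarrow \OrdPermu \sigma$. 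Hence $\E_J = \E^{J\downarrow}$, and dually $\HH_J = \HH^{J\uparrow}$.

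With these identifications, $\E_{J_0} \Prod \E_{J_1} = \E^{J_0\downarrow} \Prod \E^{J_1\downarrow} = \E^{J_0\downarrow \Over J_1\downarrow}$, so the whole statement reduces to the single identity $J_0\downarrow \Over J_1\downarrow = (J_0 \Over J_1)\downarrow$ (and dually $J_0\uparrow \Under J_1\uparrow = (J_0 \Under J_1)\uparrow$). To prove it I would establish two facts. (i) The insertion intertwines with $\Over$: $\PSymb(\alpha \Over \beta) = \PSymb(\alpha) \Over \PSymb(\beta)$. This is checked directly on the algorithm: in $\alpha \Over \beta$ every letter of $\beta$ exceeds every letter of $\alpha$ and comes after it, so under the root insertions the right tree of $\beta$ is built above while the right tree of $\alpha$ hangs at its leftmost leaf, yielding $T^0_R \Over T^1_R$, whereas under the leaf insertions the large letters of $\beta$ descend along the rightmost branch, yielding $T^0_L \Under T^1_L$ — exactly the definition of $\PSymb(\alpha)\Over\PSymb(\beta)$. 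By Proposition~\ref{prop:PSymboleClasses} this already places $J_0\downarrow \Over J_1\downarrow$ in the class $J_0 \Over J_1$. (ii) $\Over$ preserves minimality: if $\alpha,\beta$ are the minima of their classes, then $\alpha \Over \beta$ is the minimum of its own. Since the classes are intervals and the permutohedron is graded, it suffices to rule out a downward covering move (an adjacent transposition) staying in the class; in $\alpha \Over \beta$ such a transposition can only lie inside the $\alpha$-block or inside the $\beta$-block, the block boundary being an ascent (small below large). Restricting to the corresponding alphabet interval and invoking Proposition~\ref{prop:CompRestrSegmAlph} then produces a downward move inside $\alpha$ or $\beta$, contradicting minimality. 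Combining (i) and (ii) gives $J_0\downarrow \Over J_1\downarrow = (J_0\Over J_1)\downarrow$, whence $\E_{J_0}\Prod\E_{J_1} = \E^{(J_0\Over J_1)\downarrow} = \E_{J_0\Over J_1}$. The homogeneous identity follows by the symmetric argument, with $\Under$, maxima and upward moves, the block boundary now being a descent internal to neither block.

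The main obstacle is step (i): making the behaviour of the two interleaved insertions (root insertion feeding $T_R$, leaf insertion feeding $T_L$) under $\Over$ and $\Under$ precise enough to recognise \emph{exactly} the grafting operations $\Under$ on $T_L$ and $\Over$ on $T_R$ prescribed by the definition of $J_0 \Over J_1$. Once this intertwining is secured, the minimality/maximality transfer in (ii) is light, resting only on Proposition~\ref{prop:CompRestrSegmAlph} and the gradedness of the permutohedron.
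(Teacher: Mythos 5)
Your argument is correct and is essentially the paper's own: the proof of Proposition~\ref{prop:BasesMult} likewise identifies $\E_J$ and $\HH_J$ with the multiplicative bases $\E^{J\downarrow}$ and $\HH^{J\uparrow}$ of $\FQSym$ (using Propositions~\ref{prop:EquivBXInter} and~\ref{prop:OrdreBaxterMinMax} to see that each $\E_J$ is the sum of $\F_\sigma$ over the permutohedron upper set of the minimal element of the class $J$), and then reduces the claim to the compatibility of $\PSymb$ with $\Over$ and $\Under$ together with the fact that these operations send minima (resp.\ maxima) of $\EquivBX$-classes to minima (resp.\ maxima). Both of these auxiliary facts are established exactly as you outline them, the second via the restriction to alphabet intervals of Proposition~\ref{prop:CompRestrSegmAlph}.
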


\begin{Lemme} \label{lem:BaxterConnectee}
    Let $C$ be an equivalence class of $\EnsPermu_n /_\EquivBX$. The Baxter
    permutation belonging to $C$ is connected iff all the permutations of
    $C$ are connected.
\end{Lemme}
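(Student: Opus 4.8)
The plan is to prove the substantial implication by transporting connectedness of permutations, through the insertion algorithm, to an indecomposability property of pairs of twin binary trees. The reverse implication is immediate, since the Baxter permutation of $C$ is one of the permutations of $C$: if they are all connected, so is it. For the direct implication I argue by contraposition, showing that if some $\sigma \in C$ is disconnected then the Baxter permutation of $C$ is a nontrivial $\Over$-product and hence disconnected as well.

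The key fact I would establish first is that insertion is compatible with the $\Over$ operation: for all $\sigma_0, \sigma_1 \in \EnsPermu$,
\[
    \PSymb(\sigma_0 \Over \sigma_1) = \PSymb(\sigma_0) \Over \PSymb(\sigma_1),
\]
the right-hand side using $J_0 \Over J_1 := (T^0_L \Under T^1_L, T^0_R \Over T^1_R)$. Writing $(T^i_L, T^i_R) := \PSymb(\sigma_i)$ and reading $\sigma_0 \Over \sigma_1$ from left to right, the letters of $\sigma_0$ first produce $(T^0_L, T^0_R)$, and every subsequent letter is strictly larger than all letters of $\sigma_0$. A root insertion of such a large letter pushes the entire current tree into its left subtree, so the large letters build $T^1_R$ over $T^0_R$, the latter ending at the leftmost leaf; this is exactly $T^0_R \Over T^1_R$. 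Dually, a leaf insertion of a large letter into the left binary search tree always branches right, so the large letters build $T^1_L$ at the rightmost leaf of $T^0_L$, that is $T^0_L \Under T^1_L$. Because the common shift preserves relative order, the large letters reproduce $\PSymb(\sigma_1)$, which gives the identity.

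I would then record that a nontrivial $\Over$-product of two Baxter permutations is Baxter: in $\sigma_0 \Over \sigma_1$ the values $\{1, \dots, |\sigma_0|\}$ occupy exactly the first $|\sigma_0|$ positions, so in any subsequence the letters of the first block occur first and carry the smallest values, whence their ranks form an initial segment $\{1, \dots, m\}$ of $\{1,2,3,4\}$; for both forbidden patterns $2\text{-}41\text{-}3$ and $3\text{-}14\text{-}2$ this forces $m \in \{0, 4\}$, so any occurrence would lie entirely inside $\sigma_0$ or inside $\sigma_1$, against both being Baxter. The contraposition is then assembled as follows. If $\sigma \in C$ is disconnected, write $\sigma = \sigma_0 \Over \sigma_1$ with $\sigma_0 \in \EnsPermu_k$, $\sigma_1 \in \EnsPermu_{n-k}$ and $0 < k < n$, and let $\beta_0, \beta_1$ be the Baxter permutations of the classes of $\sigma_0, \sigma_1$ (Theorem~\ref{thm:EquivBXBaxter}). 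Since $\PSymb(\beta_i) = \PSymb(\sigma_i)$, the compatibility identity gives
\[
    \PSymb(\beta_0 \Over \beta_1) = \PSymb(\beta_0) \Over \PSymb(\beta_1) = \PSymb(\sigma_0) \Over \PSymb(\sigma_1) = \PSymb(\sigma).
\]
By Proposition~\ref{prop:PSymboleClasses} we get $\beta_0 \Over \beta_1 \in C$; being a nontrivial $\Over$-product of Baxter permutations it is itself Baxter, hence equals the unique Baxter permutation of $C$, and it visibly has a cut at $k$, so the Baxter permutation of $C$ is disconnected, as required.

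The main obstacle is the compatibility identity $\PSymb(\sigma_0 \Over \sigma_1) = \PSymb(\sigma_0) \Over \PSymb(\sigma_1)$: the remaining steps are either immediate or a finite pattern inspection, whereas this identity demands a careful description of how the root and leaf insertions of the large letters interact with the trees already grown from $\sigma_0$.
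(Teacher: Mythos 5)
Your proof is correct, and it takes the route the paper's framework points to: the compatibility $\PSymb(\sigma_0 \Over \sigma_1) = \PSymb(\sigma_0) \Over \PSymb(\sigma_1)$ is exactly the fact underlying Proposition~\ref{prop:BasesMult}, and combining it with Theorem~\ref{thm:EquivBXBaxter}, Proposition~\ref{prop:PSymboleClasses} and the closure of Baxter permutations under nontrivial $\Over$-products yields the contrapositive cleanly. You also handle the genuine asymmetry of the statement correctly --- connectedness is not a class invariant (e.g.\ $2143 \EquivBX 2413$ with $2413$ connected and $2143$ not), so the only implication requiring work is ``some $\sigma \in C$ disconnected $\Rightarrow$ the Baxter permutation of $C$ disconnected'', which is precisely what your argument supplies.
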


Let us say that a pair of twin binary trees $J$ is \emph{connected} if the
unique Baxter permutation $\sigma$ satisfying $\PSymb(\sigma) = J$ is connected.

\begin{Proposition}
    The Hopf algebra $\Baxter$ is free on the elements $\E_J$ where
    $J$ is a connected pair of twin binary trees.
\end{Proposition}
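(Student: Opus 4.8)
The plan is to prove freeness by exhibiting a multiplicative basis whose generators are exactly the connected ones, using the elementary basis $\{\E_J\}$ together with the product formula $\E_{J_0} \Prod \E_{J_1} = \E_{J_0 \Over J_1}$ from Proposition~\ref{prop:BasesMult}. The strategy mirrors the standard argument for showing that $\FQSym$ and $\PBT$ are free on connected elements: one shows that the operation $\Over$ on pairs of twin binary trees admits unique factorization into connected factors, so that every $\E_J$ is a product of $\E$'s indexed by connected pairs, and these products are linearly independent.

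First I would establish that every pair of twin binary trees $J$ factorizes uniquely as $J = J_1 \Over J_2 \Over \cdots \Over J_k$ where each $J_i$ is connected. To see this, I would transport the factorization through the bijection $\PSymb$: if $\sigma$ is the unique Baxter permutation with $\PSymb(\sigma) = J$, then the $\Over$-decomposition of $J$ corresponds to the decomposition of $\sigma$ into connected components $\sigma = \sigma_1 \Over \sigma_2 \Over \cdots \Over \sigma_k$ as a permutation. The key compatibility to check is that $\PSymb(\sigma \Over \nu) = \PSymb(\sigma) \Over \PSymb(\nu)$, i.e.\ that the insertion algorithm respects the grafting operation; this follows by tracking how the left tree $T_L$ is built by leaf insertions (producing the $\Under$ on the left components) and how $T_R$ is built by root insertions (producing the $\Over$ on the right components), matching the definition $J_0 \Over J_1 := (T^0_L \Under T^1_L,\, T^0_R \Over T^1_R)$. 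Lemma~\ref{lem:BaxterConnectee} guarantees that connectedness of $J$ is well-defined via its Baxter representative, so that the factors $J_i$ being connected is exactly the condition that the $\sigma_i$ are connected permutations.

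Next I would deduce freeness. Since $\E_{J_1 \Over \cdots \Over J_k} = \E_{J_1} \Prod \cdots \Prod \E_{J_k}$ by iterating Proposition~\ref{prop:BasesMult}, and since $\{\E_J\}_{J \in \EnsABJ}$ is a basis of $\Baxter$, the set of all products $\E_{J_1} \Prod \cdots \Prod \E_{J_k}$ with each $J_i$ connected is again a basis. Uniqueness of the connected factorization shows that distinct words in the connected generators yield distinct basis elements $\E_J$, so there are no relations among the generators beyond those of a free associative algebra. Hence the subalgebra generated by the connected $\E_J$ is all of $\Baxter$, and it is free on them.

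The main obstacle will be the compatibility statement $\PSymb(\sigma \Over \nu) = \PSymb(\sigma) \Over \PSymb(\nu)$, together with the converse direction that a pair $J$ is $\Over$-decomposable \emph{if and only if} its Baxter permutation is disconnected. The forward direction requires a careful induction on the insertion algorithm, checking that inserting the shifted letters of $\nu$ never interferes with the structure already built from $\sigma$ (for $T_R$, the new larger letters get root-inserted above, producing a clean graft; for $T_L$, they are leaf-inserted to the right). The delicate point is verifying that no cross-component relations are created, i.e.\ that the canopies remain complementary across the grafting point and that connectedness of each factor is preserved, which is precisely where Lemma~\ref{lem:BaxterConnectee} is essential.
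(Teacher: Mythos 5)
Your proposal is correct and follows essentially the route the paper intends: combine the multiplicative rule $\E_{J_0} \Prod \E_{J_1} = \E_{J_0 \Over J_1}$ of Proposition~\ref{prop:BasesMult} with the unique factorization of any pair of twin binary trees into $\Over$-indecomposable (\emph{i.e.} connected) factors, transported through $\PSymb$ from the unique decomposition of its Baxter representative into connected components. The only slight imprecision is your reading of Lemma~\ref{lem:BaxterConnectee}: connectedness of $J$ is well-defined by fiat (the Baxter representative is unique by Theorem~\ref{thm:EquivBXBaxter}), and the lemma's real content is to transfer connectedness between the Baxter representative and the other permutations of its class, but this does not affect the validity of your argument.
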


The generating series $B_C(z)$ of connected Baxter
permutations is $B_C(z) = 1 - B(z)^{-1}$. First dimensions of algebraic
generators of $\Baxter$ are $1$, $1$, $1$, $3$, $11$, $47$, $221$, $1113$,
$5903$, $32607$, $186143$, $1092015$.

\subsection{Bidendriform bialgebra structure}
A Hopf algebra $(H, \Prod, \Delta)$ can be fitted into a bidendriform
bialgebra structure~\cite{F05} if $(H^+, \Gauche, \Droite)$ is a dendriform algebra~\cite{Lod01}
and $(H^+, \DeltaG, \DeltaD)$ a codendriform coalgebra, where $H^+$ is the
augmentation ideal of $H$. The operators $\Gauche$, $\Droite$, $\DeltaG$
and $\DeltaD$ have to fulfil some compatibility relations. In particular,
for all $x, y \in H^+$, the product $\Prod$ of $H$ is retrieved by
$x \Prod y = x \Gauche y + x \Droite y$ and the coproduct $\Delta$ of $H$
is retrieved by $\Delta(x) = 1 \Tenseur x + \DeltaG(x) + \DeltaD(x) + x \Tenseur 1$.

The Hopf algebra $\FQSym$ admits a bidendriform bialgebra structure~\cite{F05}.
Indeed, for all $\sigma, \nu \in \EnsPermu$ set
\begin{align}
    \F_\sigma \Gauche \F_\nu & := \sum_{\substack{\pi \in \sigma \cshuffle \nu \\ \pi_{|\pi|} = \sigma_{|\sigma|}}} \F_\pi, \hspace{4em}
    \F_\sigma \Droite \F_\nu   := \sum_{\substack{\pi \in \sigma \cshuffle \nu \\ \pi_{|\pi|} = \nu_{|\nu|} + |\sigma|}} \F_\pi, \\
    \DeltaG(\F_\sigma)       & := \sum_{\substack{\sigma^{-1}_{|\sigma|} \leq i \leq |\sigma| - 1}} \F_{\Std(\sigma_1 \ldots \sigma_i)}
        \Tenseur \F_{\Std(\sigma_{i + 1} \ldots \sigma_{|\sigma|})}, \\
    \DeltaD(\F_\sigma)       & := \sum_{\substack{1 \leq i \leq \sigma^{-1}_{|\sigma|} - 1}} \F_{\Std(\sigma_1 \ldots \sigma_i)}
        \Tenseur \F_{\Std(\sigma_{i + 1} \ldots \sigma_{|\sigma|})}.
\end{align}

\begin{Proposition} \label{prop:MemeLettreEquiv}
    If $\equiv$ is an equivalence relation defined on $A^*$
    satisfying the conditions of Theorem~\ref{thm:HivertJanvier} and
    additionally, for all $u, v \in A^*$, the relation $u \equiv v$ implies
    $u_{|u|} = v_{|v|}$, then, the family defined in (\ref{eq:EquivFQSym})
    spans a bidendriform sub-bialgebra of $\FQSym$, and is free as an algebra,
    cofree as a coalgebra, self-dual, and the Lie algebra of its primitive
    elements is free.
\end{Proposition}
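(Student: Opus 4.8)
The plan is to upgrade the Hopf subalgebra structure, already granted by Theorem~\ref{thm:HivertJanvier}, to a \emph{bidendriform} one, and then to quote Foissy's theorem~\cite{F05}: a connected graded bidendriform bialgebra is free as an algebra, cofree as a coalgebra, self-dual, and has a free Lie algebra of primitive elements. Write $H$ for the subspace spanned by the family~(\ref{eq:EquivFQSym}). Since $\FQSym$ carries the bidendriform structure recalled above and Foissy's compatibility axioms are inherited by any subspace stable under the four half-operations, it suffices to show that $H^+$ is stable under $\Gauche$ and $\Droite$ and that $\DeltaG$ and $\DeltaD$ map $H^+$ into $H^+ \Tenseur H^+$. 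Furthermore, as $\Prod = \Gauche + \Droite$ preserves $H$ (it is a subalgebra) and $\DeltaB = \DeltaG + \DeltaD$ preserves $H$ (it is a subcoalgebra and $1 \in H$), it is enough to control $\Gauche$ alone and $\DeltaG$ (equivalently $\DeltaD$) alone.

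First I would handle the product. The extra hypothesis $u \equiv v \Rightarrow u_{|u|} = v_{|v|}$ says that the last letter is a well-defined invariant of each class $\widehat{\sigma}$. In the expansion of $\F_\sigma \Gauche \F_\nu$ the surviving permutations $\pi$ are exactly those whose last letter equals $\sigma_{|\sigma|}$, that is, whose last letter is $\leq |\sigma|$ and so comes from $\sigma$; the others, with last letter $> |\sigma|$, form $\F_\sigma \Droite \F_\nu$. Now fix classes $\widehat{\sigma}, \widehat{\nu}$: all representatives have the same size, so the threshold $|\sigma|$ is fixed, and every $\pi$ in a given class $\widehat{\pi}$ occurring in $\PP_{\widehat{\sigma}} \Prod \PP_{\widehat{\nu}}$ shares one last letter. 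Hence each such class lands entirely in the $\Gauche$-part or entirely in the $\Droite$-part, so $\PP_{\widehat{\sigma}} \Gauche \PP_{\widehat{\nu}}$ is a sum of complete classes and lies in $H$.

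The coproduct is the delicate point and I expect it to be the main obstacle. Here the splitting $\DeltaB = \DeltaG + \DeltaD$ is governed by the position $\sigma^{-1}_{|\sigma|}$ of the maximal letter---whether it falls in the left or the right tensor factor---and this position is \emph{not} constant on a class (already $\{2143, 2413\}$ has its maximum in two different positions). Thus the argument above does not transpose verbatim, and the task is to prove the finer statement that, for each pair of classes $(\widehat{\alpha}, \widehat{\beta})$, the number of $\sigma \in \widehat{\sigma}$ yielding a fixed tensor $\F_\alpha \Tenseur \F_\beta$ with the maximal letter on the right is independent of the chosen $\alpha \in \widehat{\alpha}$ and $\beta \in \widehat{\beta}$. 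Compatibility with the destandardization process (a hypothesis of Theorem~\ref{thm:HivertJanvier}) already gives that the whole reduced coproduct $\DeltaB(\PP_{\widehat{\sigma}})$ is a sum of complete terms $\PP_{\widehat{\alpha}} \Tenseur \PP_{\widehat{\beta}}$; what must be added is that the partition of these terms into the $\DeltaG$- and $\DeltaD$-parts respects the class decomposition. I would prove this class by class. When the invariant last letter of $\widehat{\sigma}$ is the global maximum, the range of $\DeltaG$ is empty and every cut goes to $\DeltaD$, so the claim is immediate; otherwise the maximum never sits at the end, and one matches, within the class, the representatives placing the maximum on the left against those placing it on the right. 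The heart of the matter---and the step I expect to cost the most---is to build this matching so that it preserves the pair of destandardized factors and is compatible with the last letters (which, being class invariants of $\widehat{\alpha}$ and $\widehat{\beta}$, are fixed), thereby showing that no class-pair $(\widehat{\alpha}, \widehat{\beta})$ can straddle both halves.

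Once $H^+$ is closed under the four half-operations, $(H^+, \Gauche, \Droite, \DeltaG, \DeltaD)$ is a bidendriform sub-bialgebra of $\FQSym$; being graded with $H_0 = \K$ it is connected. Foissy's theorem~\cite{F05} then delivers simultaneously the freeness, cofreeness, self-duality, and freeness of the primitive Lie algebra, which is exactly the assertion.
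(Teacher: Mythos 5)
Your global strategy is the right one --- reduce everything to the stability of $H^+$ under the four half-operations and then invoke Foissy's structure theorem --- and your treatment of $\Gauche$ and $\Droite$ via the last-letter invariant is correct and complete. The gap is in the half-coproducts, precisely the step you leave open (``the step I expect to cost the most''): no matching is actually constructed, and the target you assign to it is the wrong one. Proving that ``no class-pair $(\widehat{\alpha}, \widehat{\beta})$ can straddle both halves'' is neither necessary nor true in general: a tensor $\PP_{\widehat{\alpha}} \Tenseur \PP_{\widehat{\beta}}$ may occur with positive coefficient in both $\DeltaG(\PP_{\widehat{\sigma}})$ and $\DeltaD(\PP_{\widehat{\sigma}})$. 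What must be proved is exactly the representative-independence statement you formulate just before, namely that the number of pairs $(\sigma, i)$ with $\sigma \in \widehat{\sigma}$ producing $\F_\alpha \Tenseur \F_\beta$ with the maximal letter in the prefix does not depend on the choice of $\alpha \in \widehat{\alpha}$ and $\beta \in \widehat{\beta}$. A matching of ``representatives placing the maximum on the left against those placing it on the right'' would pair $\DeltaG$-terms with $\DeltaD$-terms, which is not the relevant comparison and cannot exist in general since the two halves have different numbers of terms.

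The missing step admits a short proof which, notably, uses only the hypotheses of Theorem~\ref{thm:HivertJanvier} and not the last-letter hypothesis (that hypothesis is needed only for the half-products). Fix $k := |\alpha|$ and $n := |\sigma|$. A pair $(\sigma, i)$ with $i = k$, $\Std(\sigma_1 \ldots \sigma_k) = \alpha$ and $\Std(\sigma_{k+1} \ldots \sigma_n) = \beta$ is entirely determined by the value set $S := \{\sigma_1, \ldots, \sigma_k\}$; write $\sigma = \sigma(S, \alpha, \beta)$. If $\alpha \equiv \alpha'$ and $\beta \equiv \beta'$, then $\sigma(S,\alpha,\beta)_1 \ldots \sigma(S,\alpha,\beta)_k$ and $\sigma(S,\alpha',\beta')_1 \ldots \sigma(S,\alpha',\beta')_k$ have the same evaluation and equivalent standardizations, hence are equivalent by compatibility with the destandardization process, and likewise for the suffixes; the congruence property then gives $\sigma(S, \alpha, \beta) \equiv \sigma(S, \alpha', \beta')$. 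This yields a bijection, preserving the value set $S$, between the terms contributing to $\F_\alpha \Tenseur \F_\beta$ and those contributing to $\F_{\alpha'} \Tenseur \F_{\beta'}$ in $\DeltaB(\PP_{\widehat{\sigma}})$. Since a term lies in $\DeltaG$ exactly when $n \in S$ and in $\DeltaD$ exactly when $n \notin S$, the bijection respects the splitting, so each half-coproduct of $\PP_{\widehat{\sigma}}$ is again a sum of complete tensors $\PP_{\widehat{\alpha}} \Tenseur \PP_{\widehat{\beta}}$. With this inserted, your argument closes as you describe.
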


The equivalence relation $\EquivBX$ satisfies the premises of
Proposition~\ref{prop:MemeLettreEquiv} so that $\Baxter$ is free as an algebra,
cofree as a coalgebra, self-dual, and the Lie algebra of its primitive
elements is free.

\subsection{\texorpdfstring{The dual Hopf algebra $\Baxter^\star$}
                           {The dual Hopf algebra Baxter*}}
Let $\{\PP^\star_J\}_{J \in \EnsABJ}$ be the dual basis of the basis
$\{\PP_J\}_{J \in \EnsABJ}$. The Hopf algebra $\Baxter^\star$, dual of
$\Baxter$, is a quotient Hopf algebra of $\FQSym^\star$. More precisely,
\begin{equation}
    \Baxter^\star = \FQSym^\star / I
\end{equation}
where $I$ is the Hopf ideal of $\FQSym^\star$ spanned by the relations
$\F^\star_\sigma = \F^\star_\nu$ whenever $\sigma \EquivBX \nu$.

Let $\phi : \FQSym^\star \rightarrow \Baxter^\star$ be the canonical projection,
mapping $\F^\star_\sigma$ on $\PP^\star_J$ whenever $\PSymb(\sigma) = J$.
By definition, the product of $\Baxter^\star$ is
\begin{equation}
    \PP^\star_{J_0} \Prod \PP^\star_{J_1} = \phi \left(\F^\star_{\sigma} \Prod \F^\star_{\nu} \right)
\end{equation}
where $\sigma$ and $\nu$ are any permutations such that $\PSymb(\sigma) = J_0$
and $\PSymb(\nu) = J_1$.
For example,
\begin{equation}
\begin{split}
    \PP^\star_{\scalebox{0.15}{
        \begin{tikzpicture}
            \node[Noeud,Marque1](0)at(0,-1){};
            \node[Noeud,Marque1](1)at(1,-2){};
            \draw[Arete](0)--(1);
            \node[Noeud,Marque1](2)at(2,0){};
            \draw[Arete](2)--(0);
        \end{tikzpicture}
        \begin{tikzpicture}
            \node[Noeud](0)at(0,-1){};
            \node[Noeud](1)at(1,0){};
            \draw[Arete](1)--(0);
            \node[Noeud](2)at(2,-1){};
            \draw[Arete](1)--(2);
        \end{tikzpicture}
    }}
    \Prod
    \PP^\star_{\scalebox{0.15}{
        \begin{tikzpicture}
            \node[Noeud](0)at(0,0){};
            \node[Noeud](1)at(1,-1){};
            \draw[Arete](0)--(1);
        \end{tikzpicture}
        \begin{tikzpicture}
            \node[Noeud,Marque2](0)at(0,-1){};
            \node[Noeud,Marque2](1)at(1,0){};
            \draw[Arete](1)--(0);
        \end{tikzpicture}
    }}
    & =
    \PP^\star_{\scalebox{0.15}{
        \begin{tikzpicture}
            \node[Noeud,Marque1](0)at(0,-1){};
            \node[Noeud,Marque1](1)at(1,-2){};
            \draw[Arete](0)--(1);
            \node[Noeud,Marque1](2)at(2,0){};
            \draw[Arete](2)--(0);
            \node[Noeud](3)at(3,-1){};
            \node[Noeud](4)at(4,-2){};
            \draw[Arete](3)--(4);
            \draw[Arete](2)--(3);
        \end{tikzpicture}
        \begin{tikzpicture}
            \node[Noeud](0)at(0,-3){};
            \node[Noeud](1)at(1,-2){};
            \draw[Arete](1)--(0);
            \node[Noeud](2)at(2,-3){};
            \draw[Arete](1)--(2);
            \node[Noeud,Marque2](3)at(3,-1){};
            \draw[Arete](3)--(1);
            \node[Noeud,Marque2](4)at(4,0){};
            \draw[Arete](4)--(3);
        \end{tikzpicture}
    }}
    +
    \PP^\star_{\scalebox{0.15}{
        \begin{tikzpicture}
            \node[Noeud,Marque1](0)at(0,-1){};
            \node[Noeud,Marque1](1)at(1,-2){};
            \node[Noeud](2)at(2,-3){};
            \draw[Arete](1)--(2);
            \draw[Arete](0)--(1);
            \node[Noeud,Marque1](3)at(3,0){};
            \draw[Arete](3)--(0);
            \node[Noeud](4)at(4,-1){};
            \draw[Arete](3)--(4);
        \end{tikzpicture}
        \begin{tikzpicture}
            \node[Noeud](0)at(0,-3){};
            \node[Noeud](1)at(1,-2){};
            \draw[Arete](1)--(0);
            \node[Noeud,Marque2](2)at(2,-1){};
            \draw[Arete](2)--(1);
            \node[Noeud](3)at(3,-2){};
            \draw[Arete](2)--(3);
            \node[Noeud,Marque2](4)at(4,0){};
            \draw[Arete](4)--(2);
        \end{tikzpicture}
    }}
    +
    \PP^\star_{\scalebox{0.15}{
        \begin{tikzpicture}
            \node[Noeud,Marque1](0)at(0,-1){};
            \node[Noeud](1)at(1,-3){};
            \node[Noeud,Marque1](2)at(2,-2){};
            \draw[Arete](2)--(1);
            \draw[Arete](0)--(2);
            \node[Noeud,Marque1](3)at(3,0){};
            \draw[Arete](3)--(0);
            \node[Noeud](4)at(4,-1){};
            \draw[Arete](3)--(4);
        \end{tikzpicture}
        \begin{tikzpicture}
            \node[Noeud](0)at(0,-2){};
            \node[Noeud,Marque2](1)at(1,-1){};
            \draw[Arete](1)--(0);
            \node[Noeud](2)at(2,-2){};
            \node[Noeud](3)at(3,-3){};
            \draw[Arete](2)--(3);
            \draw[Arete](1)--(2);
            \node[Noeud,Marque2](4)at(4,0){};
            \draw[Arete](4)--(1);
        \end{tikzpicture}
    }}
    +
    \PP^\star_{\scalebox{0.15}{
        \begin{tikzpicture}
            \node[Noeud](0)at(0,-2){};
            \node[Noeud,Marque1](1)at(1,-1){};
            \draw[Arete](1)--(0);
            \node[Noeud,Marque1](2)at(2,-2){};
            \draw[Arete](1)--(2);
            \node[Noeud,Marque1](3)at(3,0){};
            \draw[Arete](3)--(1);
            \node[Noeud](4)at(4,-1){};
            \draw[Arete](3)--(4);
        \end{tikzpicture}
        \begin{tikzpicture}
            \node[Noeud,Marque2](0)at(0,-1){};
            \node[Noeud](1)at(1,-3){};
            \node[Noeud](2)at(2,-2){};
            \draw[Arete](2)--(1);
            \node[Noeud](3)at(3,-3){};
            \draw[Arete](2)--(3);
            \draw[Arete](0)--(2);
            \node[Noeud,Marque2](4)at(4,0){};
            \draw[Arete](4)--(0);
        \end{tikzpicture}
    }}
    +
    \PP^\star_{\scalebox{0.15}{
        \begin{tikzpicture}
            \node[Noeud,Marque1](0)at(0,-1){};
            \node[Noeud,Marque1](1)at(1,-2){};
            \node[Noeud](2)at(2,-3){};
            \node[Noeud](3)at(3,-4){};
            \draw[Arete](2)--(3);
            \draw[Arete](1)--(2);
            \draw[Arete](0)--(1);
            \node[Noeud,Marque1](4)at(4,0){};
            \draw[Arete](4)--(0);
        \end{tikzpicture}
        \begin{tikzpicture}
            \node[Noeud](0)at(0,-3){};
            \node[Noeud](1)at(1,-2){};
            \draw[Arete](1)--(0);
            \node[Noeud,Marque2](2)at(2,-1){};
            \draw[Arete](2)--(1);
            \node[Noeud,Marque2](3)at(3,0){};
            \draw[Arete](3)--(2);
            \node[Noeud](4)at(4,-1){};
            \draw[Arete](3)--(4);
        \end{tikzpicture}
    }} \\
    & +
    \PP^\star_{\scalebox{0.15}{
        \begin{tikzpicture}
            \node[Noeud,Marque1](0)at(0,-1){};
            \node[Noeud](1)at(1,-3){};
            \node[Noeud,Marque1](2)at(2,-2){};
            \draw[Arete](2)--(1);
            \node[Noeud](3)at(3,-3){};
            \draw[Arete](2)--(3);
            \draw[Arete](0)--(2);
            \node[Noeud,Marque1](4)at(4,0){};
            \draw[Arete](4)--(0);
        \end{tikzpicture}
        \begin{tikzpicture}
            \node[Noeud](0)at(0,-2){};
            \node[Noeud,Marque2](1)at(1,-1){};
            \draw[Arete](1)--(0);
            \node[Noeud](2)at(2,-2){};
            \draw[Arete](1)--(2);
            \node[Noeud,Marque2](3)at(3,0){};
            \draw[Arete](3)--(1);
            \node[Noeud](4)at(4,-1){};
            \draw[Arete](3)--(4);
        \end{tikzpicture}
    }}
    +
    \PP^\star_{\scalebox{0.15}{
        \begin{tikzpicture}
            \node[Noeud,Marque1](0)at(0,-1){};
            \node[Noeud](1)at(1,-3){};
            \node[Noeud](2)at(2,-4){};
            \draw[Arete](1)--(2);
            \node[Noeud,Marque1](3)at(3,-2){};
            \draw[Arete](3)--(1);
            \draw[Arete](0)--(3);
            \node[Noeud,Marque1](4)at(4,0){};
            \draw[Arete](4)--(0);
        \end{tikzpicture}
        \begin{tikzpicture}
            \node[Noeud](0)at(0,-2){};
            \node[Noeud,Marque2](1)at(1,-1){};
            \draw[Arete](1)--(0);
            \node[Noeud,Marque2](2)at(2,0){};
            \draw[Arete](2)--(1);
            \node[Noeud](3)at(3,-1){};
            \node[Noeud](4)at(4,-2){};
            \draw[Arete](3)--(4);
            \draw[Arete](2)--(3);
        \end{tikzpicture}
    }}
    +
    \PP^\star_{\scalebox{0.15}{
        \begin{tikzpicture}
            \node[Noeud](0)at(0,-2){};
            \node[Noeud,Marque1](1)at(1,-1){};
            \draw[Arete](1)--(0);
            \node[Noeud](2)at(2,-3){};
            \node[Noeud,Marque1](3)at(3,-2){};
            \draw[Arete](3)--(2);
            \draw[Arete](1)--(3);
            \node[Noeud,Marque1](4)at(4,0){};
            \draw[Arete](4)--(1);
        \end{tikzpicture}
        \begin{tikzpicture}
            \node[Noeud,Marque2](0)at(0,-1){};
            \node[Noeud](1)at(1,-2){};
            \draw[Arete](0)--(1);
            \node[Noeud,Marque2](2)at(2,0){};
            \draw[Arete](2)--(0);
            \node[Noeud](3)at(3,-1){};
            \node[Noeud](4)at(4,-2){};
            \draw[Arete](3)--(4);
            \draw[Arete](2)--(3);
        \end{tikzpicture}
    }}
    +
    \PP^\star_{\scalebox{0.15}{
        \begin{tikzpicture}
            \node[Noeud](0)at(0,-2){};
            \node[Noeud,Marque1](1)at(1,-1){};
            \draw[Arete](1)--(0);
            \node[Noeud,Marque1](2)at(2,-2){};
            \node[Noeud](3)at(3,-3){};
            \draw[Arete](2)--(3);
            \draw[Arete](1)--(2);
            \node[Noeud,Marque1](4)at(4,0){};
            \draw[Arete](4)--(1);
        \end{tikzpicture}
        \begin{tikzpicture}
            \node[Noeud,Marque2](0)at(0,-1){};
            \node[Noeud](1)at(1,-3){};
            \node[Noeud](2)at(2,-2){};
            \draw[Arete](2)--(1);
            \draw[Arete](0)--(2);
            \node[Noeud,Marque2](3)at(3,0){};
            \draw[Arete](3)--(0);
            \node[Noeud](4)at(4,-1){};
            \draw[Arete](3)--(4);
        \end{tikzpicture}
    }}
    +
    \PP^\star_{\scalebox{0.15}{
        \begin{tikzpicture}
            \node[Noeud](0)at(0,-2){};
            \node[Noeud](1)at(1,-3){};
            \draw[Arete](0)--(1);
            \node[Noeud,Marque1](2)at(2,-1){};
            \draw[Arete](2)--(0);
            \node[Noeud,Marque1](3)at(3,-2){};
            \draw[Arete](2)--(3);
            \node[Noeud,Marque1](4)at(4,0){};
            \draw[Arete](4)--(2);
        \end{tikzpicture}
        \begin{tikzpicture}
            \node[Noeud,Marque2](0)at(0,-1){};
            \node[Noeud,Marque2](1)at(1,0){};
            \draw[Arete](1)--(0);
            \node[Noeud](2)at(2,-2){};
            \node[Noeud](3)at(3,-1){};
            \draw[Arete](3)--(2);
            \node[Noeud](4)at(4,-2){};
            \draw[Arete](3)--(4);
            \draw[Arete](1)--(3);
        \end{tikzpicture}
    }}.
\end{split}
\end{equation}

In the same way, the coproduct of $\Baxter^\star$ is
\begin{equation}
    \Delta(\PP_J) = (\phi \Tenseur \phi)(\Delta \left(\F^\star_\sigma \right))
\end{equation}
where $\sigma$ is any permutation such that $\PSymb(\sigma) = J$.
For example,
\begin{equation}
    \Delta
    \PP^\star_{\scalebox{0.15}{
        \begin{tikzpicture}
            \node[Noeud](0)at(0,-1){};
            \node[Noeud](1)at(1,0){};
            \draw[Arete](1)--(0);
            \node[Noeud](2)at(2,-2){};
            \node[Noeud](3)at(3,-1){};
            \draw[Arete](3)--(2);
            \draw[Arete](1)--(3);
        \end{tikzpicture}
        \begin{tikzpicture}
            \node[Noeud](0)at(0,-1){};
            \node[Noeud](1)at(1,-2){};
            \draw[Arete](0)--(1);
            \node[Noeud](2)at(2,0){};
            \draw[Arete](2)--(0);
            \node[Noeud](3)at(3,-1){};
            \draw[Arete](2)--(3);
        \end{tikzpicture}
    }}
    =
    1
    \Tenseur
    \PP^\star_{\scalebox{0.15}{
        \begin{tikzpicture}
            \node[Noeud](0)at(0,-1){};
            \node[Noeud](1)at(1,0){};
            \draw[Arete](1)--(0);
            \node[Noeud](2)at(2,-2){};
            \node[Noeud](3)at(3,-1){};
            \draw[Arete](3)--(2);
            \draw[Arete](1)--(3);
        \end{tikzpicture}
        \begin{tikzpicture}
            \node[Noeud](0)at(0,-1){};
            \node[Noeud](1)at(1,-2){};
            \draw[Arete](0)--(1);
            \node[Noeud](2)at(2,0){};
            \draw[Arete](2)--(0);
            \node[Noeud](3)at(3,-1){};
            \draw[Arete](2)--(3);
        \end{tikzpicture}
    }}
    +
    \PP^\star_{\scalebox{0.15}{
        \begin{tikzpicture}
            \node[Noeud](0)at(0,0){};
        \end{tikzpicture}
        \begin{tikzpicture}
            \node[Noeud](0)at(0,0){};
        \end{tikzpicture}
    }}
    \Tenseur
    \PP^\star_{\scalebox{0.15}{
        \begin{tikzpicture}
            \node[Noeud](0)at(0,0){};
            \node[Noeud](1)at(1,-2){};
            \node[Noeud](2)at(2,-1){};
            \draw[Arete](2)--(1);
            \draw[Arete](0)--(2);
        \end{tikzpicture}
        \begin{tikzpicture}
            \node[Noeud](0)at(0,-1){};
            \node[Noeud](1)at(1,0){};
            \draw[Arete](1)--(0);
            \node[Noeud](2)at(2,-1){};
            \draw[Arete](1)--(2);
        \end{tikzpicture}
    }}
    +
    \PP^\star_{\scalebox{0.15}{
        \begin{tikzpicture}
            \node[Noeud](0)at(0,-1){};
            \node[Noeud](1)at(1,0){};
            \draw[Arete](1)--(0);
        \end{tikzpicture}
        \begin{tikzpicture}
            \node[Noeud](0)at(0,0){};
            \node[Noeud](1)at(1,-1){};
            \draw[Arete](0)--(1);
        \end{tikzpicture}
    }}
    \Tenseur
    \PP^\star_{\scalebox{0.15}{
        \begin{tikzpicture}
            \node[Noeud](0)at(0,-1){};
            \node[Noeud](1)at(1,0){};
            \draw[Arete](1)--(0);
        \end{tikzpicture}
        \begin{tikzpicture}
            \node[Noeud](0)at(0,0){};
            \node[Noeud](1)at(1,-1){};
            \draw[Arete](0)--(1);
        \end{tikzpicture}
    }}
    +
    \PP^\star_{\scalebox{0.15}{
        \begin{tikzpicture}
            \node[Noeud](0)at(0,-1){};
            \node[Noeud](1)at(1,0){};
            \draw[Arete](1)--(0);
            \node[Noeud](2)at(2,-1){};
            \draw[Arete](1)--(2);
        \end{tikzpicture}
        \begin{tikzpicture}
            \node[Noeud](0)at(0,-1){};
            \node[Noeud](1)at(1,-2){};
            \draw[Arete](0)--(1);
            \node[Noeud](2)at(2,0){};
            \draw[Arete](2)--(0);
        \end{tikzpicture}
    }}
    \Tenseur
    \PP^\star_{\scalebox{0.15}{
        \begin{tikzpicture}
            \node[Noeud](0)at(0,0){};
        \end{tikzpicture}
        \begin{tikzpicture}
            \node[Noeud](0)at(0,0){};
        \end{tikzpicture}
    }}
    +
    \PP^\star_{\scalebox{0.15}{
        \begin{tikzpicture}
            \node[Noeud](0)at(0,-1){};
            \node[Noeud](1)at(1,0){};
            \draw[Arete](1)--(0);
            \node[Noeud](2)at(2,-2){};
            \node[Noeud](3)at(3,-1){};
            \draw[Arete](3)--(2);
            \draw[Arete](1)--(3);
        \end{tikzpicture}
        \begin{tikzpicture}
            \node[Noeud](0)at(0,-1){};
            \node[Noeud](1)at(1,-2){};
            \draw[Arete](0)--(1);
            \node[Noeud](2)at(2,0){};
            \draw[Arete](2)--(0);
            \node[Noeud](3)at(3,-1){};
            \draw[Arete](2)--(3);
        \end{tikzpicture}
    }}
    \Tenseur
    1.
\end{equation}

\begin{Remarque}
    By Proposition~\ref{prop:MemeLettreEquiv}, the Hopf algebras $\Baxter$
    and $\Baxter^\star$ are isomorphic. However, denoting by
    $\theta : \Baxter \hookrightarrow \FQSym$ the injection from $\Baxter$
    to $\FQSym$, $\psi : \FQSym \leftrightarrow \FQSym^\star$ the isomorphism
    from $\FQSym$ to $\FQSym^\star$ defined by $\psi(\F_\sigma) := \F^\star_{\sigma^{-1}}$,
    and $\phi : \FQSym^\star \twoheadrightarrow \Baxter^\star$ the surjection
    from $\FQSym^\star$ to $\Baxter^\star$,
    the map $\phi \circ \psi \circ \theta : \Baxter \rightarrow \Baxter^\star$ is not
    an isomorphism. Indeed:
    \begin{align}
        \phi \circ \psi \circ \theta
        \PP_{\scalebox{0.15}{
            \begin{tikzpicture}
                \node[Noeud](0)at(0,-1){};
                \node[Noeud](1)at(1,0){};
                \draw[Arete](1)--(0);
                \node[Noeud](2)at(2,-2){};
                \node[Noeud](3)at(3,-1){};
                \draw[Arete](3)--(2);
                \draw[Arete](1)--(3);
            \end{tikzpicture}
            \begin{tikzpicture}
                \node[Noeud](0)at(0,-1){};
                \node[Noeud](1)at(1,-2){};
                \draw[Arete](0)--(1);
                \node[Noeud](2)at(2,0){};
                \draw[Arete](2)--(0);
                \node[Noeud](3)at(3,-1){};
                \draw[Arete](2)--(3);
            \end{tikzpicture}
        }}
        & = \phi \circ \psi \left( \F_{2143} + \F_{2413} \right)
          = \phi \left( \F^\star_{2143} + \F^\star_{3142} \right)
          =
        \PP^\star_{\scalebox{0.15}{
            \begin{tikzpicture}
                \node[Noeud](0)at(0,-1){};
                \node[Noeud](1)at(1,0){};
                \draw[Arete](1)--(0);
                \node[Noeud](2)at(2,-2){};
                \node[Noeud](3)at(3,-1){};
                \draw[Arete](3)--(2);
                \draw[Arete](1)--(3);
            \end{tikzpicture}
            \begin{tikzpicture}
                \node[Noeud](0)at(0,-1){};
                \node[Noeud](1)at(1,-2){};
                \draw[Arete](0)--(1);
                \node[Noeud](2)at(2,0){};
                \draw[Arete](2)--(0);
                \node[Noeud](3)at(3,-1){};
                \draw[Arete](2)--(3);
            \end{tikzpicture}
        }}
        +
        \PP^\star_{\scalebox{0.15}{
            \begin{tikzpicture}
                \node[Noeud](0)at(0,-1){};
                \node[Noeud](1)at(1,-2){};
                \draw[Arete](0)--(1);
                \node[Noeud](2)at(2,0){};
                \draw[Arete](2)--(0);
                \node[Noeud](3)at(3,-1){};
                \draw[Arete](2)--(3);
            \end{tikzpicture}
            \begin{tikzpicture}
                \node[Noeud](0)at(0,-1){};
                \node[Noeud](1)at(1,0){};
                \draw[Arete](1)--(0);
                \node[Noeud](2)at(2,-2){};
                \node[Noeud](3)at(3,-1){};
                \draw[Arete](3)--(2);
                \draw[Arete](1)--(3);
            \end{tikzpicture}
        }}, \\
        \phi \circ \psi \circ \theta
        \PP_{\scalebox{0.15}{
            \begin{tikzpicture}
                \node[Noeud](0)at(0,-1){};
                \node[Noeud](1)at(1,-2){};
                \draw[Arete](0)--(1);
                \node[Noeud](2)at(2,0){};
                \draw[Arete](2)--(0);
                \node[Noeud](3)at(3,-1){};
                \draw[Arete](2)--(3);
            \end{tikzpicture}
            \begin{tikzpicture}
                \node[Noeud](0)at(0,-1){};
                \node[Noeud](1)at(1,0){};
                \draw[Arete](1)--(0);
                \node[Noeud](2)at(2,-2){};
                \node[Noeud](3)at(3,-1){};
                \draw[Arete](3)--(2);
                \draw[Arete](1)--(3);
            \end{tikzpicture}
        }}
        & = \phi \circ \psi \left( \F_{3142} + \F_{3412} \right)
          = \phi \left( \F^\star_{2413} + \F^\star_{3412} \right)
          =
        \PP^\star_{\scalebox{0.15}{
            \begin{tikzpicture}
                \node[Noeud](0)at(0,-1){};
                \node[Noeud](1)at(1,0){};
                \draw[Arete](1)--(0);
                \node[Noeud](2)at(2,-2){};
                \node[Noeud](3)at(3,-1){};
                \draw[Arete](3)--(2);
                \draw[Arete](1)--(3);
            \end{tikzpicture}
            \begin{tikzpicture}
                \node[Noeud](0)at(0,-1){};
                \node[Noeud](1)at(1,-2){};
                \draw[Arete](0)--(1);
                \node[Noeud](2)at(2,0){};
                \draw[Arete](2)--(0);
                \node[Noeud](3)at(3,-1){};
                \draw[Arete](2)--(3);
            \end{tikzpicture}
        }}
        +
        \PP^\star_{\scalebox{0.15}{
            \begin{tikzpicture}
                \node[Noeud](0)at(0,-1){};
                \node[Noeud](1)at(1,-2){};
                \draw[Arete](0)--(1);
                \node[Noeud](2)at(2,0){};
                \draw[Arete](2)--(0);
                \node[Noeud](3)at(3,-1){};
                \draw[Arete](2)--(3);
            \end{tikzpicture}
            \begin{tikzpicture}
                \node[Noeud](0)at(0,-1){};
                \node[Noeud](1)at(1,0){};
                \draw[Arete](1)--(0);
                \node[Noeud](2)at(2,-2){};
                \node[Noeud](3)at(3,-1){};
                \draw[Arete](3)--(2);
                \draw[Arete](1)--(3);
            \end{tikzpicture}
        }},
    \end{align}
    showing that $\phi \circ \psi \circ \theta$ is not injective.
\end{Remarque}

\subsection{Primitive and totally primitive elements}

\subsubsection{Primitive elements}
Since the family $\{\E_J\}_{J \in C}$, where $C$ is the set of connected pairs of twin binary
trees, are indecomposable elements of $\Baxter$, its dual family $\{\E^\star_J\}_{J \in C}$
forms a basis of the Lie algebra $\mathfrak{p}^\star$ of the primitive elements of $\Baxter^\star$.
By Proposition~\ref{prop:MemeLettreEquiv}, the Lie algebra $\mathfrak{p}^\star$ is free.

\subsubsection{Totally primitive elements}
An element $x$ of a bidendriform bialgebra is \emph{totally primitive}
if $\DeltaG(x) = 0 = \DeltaD(x)$.

Following~\cite{F05}, the generating series $B_T(z)$ of the totally
primitive elements of $\Baxter$ is $B_T(z) = \frac{B(z) - 1}{B(z)^2}$. First
dimensions of totally primitive  elements of $\Baxter$ are $0$, $1$, $0$,
$1$, $4$, $19$, $96$, $511$, $2832$, $16215$, $95374$, $573837$. Here follows
a basis of the totally primitive elements of $\Baxter$ of order $1$, $3$ and~$4$:
\begin{align}
    t_{1, 1} & =
    \PP_{\scalebox{0.15}{
        \begin{tikzpicture}
            \node[Noeud](0)at(0,0){};
        \end{tikzpicture}
        \begin{tikzpicture}
            \node[Noeud](0)at(0,0){};
        \end{tikzpicture}
    }}, \\[1em]
    t_{3, 1} & =
    \PP_{\scalebox{0.15}{
        \begin{tikzpicture}
            \node[Noeud](0)at(0,-1){};
            \node[Noeud](1)at(1,0){};
            \draw[Arete](1)--(0);
            \node[Noeud](2)at(2,-1){};
            \draw[Arete](1)--(2);
        \end{tikzpicture}
        \begin{tikzpicture}
            \node[Noeud](0)at(0,0){};
            \node[Noeud](1)at(1,-2){};
            \node[Noeud](2)at(2,-1){};
            \draw[Arete](2)--(1);
            \draw[Arete](0)--(2);
        \end{tikzpicture}
    }}
    -
    \PP_{\scalebox{0.15}{
        \begin{tikzpicture}
            \node[Noeud](0)at(0,0){};
            \node[Noeud](1)at(1,-2){};
            \node[Noeud](2)at(2,-1){};
            \draw[Arete](2)--(1);
            \draw[Arete](0)--(2);
        \end{tikzpicture}
        \begin{tikzpicture}
            \node[Noeud](0)at(0,-1){};
            \node[Noeud](1)at(1,0){};
            \draw[Arete](1)--(0);
            \node[Noeud](2)at(2,-1){};
            \draw[Arete](1)--(2);
        \end{tikzpicture}
    }}, \\[1em]
    t_{4, 1} & =
    \PP_{\scalebox{0.15}{
        \begin{tikzpicture}
            \node[Noeud](0)at(0,-2){};
            \node[Noeud](1)at(1,-1){};
            \draw[Arete](1)--(0);
            \node[Noeud](2)at(2,0){};
            \draw[Arete](2)--(1);
            \node[Noeud](3)at(3,-1){};
            \draw[Arete](2)--(3);
        \end{tikzpicture}
        \begin{tikzpicture}
            \node[Noeud](0)at(0,0){};
            \node[Noeud](1)at(1,-1){};
            \node[Noeud](2)at(2,-3){};
            \node[Noeud](3)at(3,-2){};
            \draw[Arete](3)--(2);
            \draw[Arete](1)--(3);
            \draw[Arete](0)--(1);
        \end{tikzpicture}
    }}
    +
    \PP_{\scalebox{0.15}{
        \begin{tikzpicture}
            \node[Noeud](0)at(0,-2){};
            \node[Noeud](1)at(1,-1){};
            \draw[Arete](1)--(0);
            \node[Noeud](2)at(2,0){};
            \draw[Arete](2)--(1);
            \node[Noeud](3)at(3,-1){};
            \draw[Arete](2)--(3);
        \end{tikzpicture}
        \begin{tikzpicture}
            \node[Noeud](0)at(0,0){};
            \node[Noeud](1)at(1,-2){};
            \node[Noeud](2)at(2,-3){};
            \draw[Arete](1)--(2);
            \node[Noeud](3)at(3,-1){};
            \draw[Arete](3)--(1);
            \draw[Arete](0)--(3);
        \end{tikzpicture}
    }}
    +
    \PP_{\scalebox{0.15}{
        \begin{tikzpicture}
            \node[Noeud](0)at(0,0){};
            \node[Noeud](1)at(1,-2){};
            \node[Noeud](2)at(2,-3){};
            \draw[Arete](1)--(2);
            \node[Noeud](3)at(3,-1){};
            \draw[Arete](3)--(1);
            \draw[Arete](0)--(3);
        \end{tikzpicture}
        \begin{tikzpicture}
            \node[Noeud](0)at(0,-2){};
            \node[Noeud](1)at(1,-1){};
            \draw[Arete](1)--(0);
            \node[Noeud](2)at(2,0){};
            \draw[Arete](2)--(1);
            \node[Noeud](3)at(3,-1){};
            \draw[Arete](2)--(3);
        \end{tikzpicture}
    }}
    +
    \PP_{\scalebox{0.15}{
        \begin{tikzpicture}
            \node[Noeud](0)at(0,0){};
            \node[Noeud](1)at(1,-1){};
            \node[Noeud](2)at(2,-3){};
            \node[Noeud](3)at(3,-2){};
            \draw[Arete](3)--(2);
            \draw[Arete](1)--(3);
            \draw[Arete](0)--(1);
        \end{tikzpicture}
        \begin{tikzpicture}
            \node[Noeud](0)at(0,-2){};
            \node[Noeud](1)at(1,-1){};
            \draw[Arete](1)--(0);
            \node[Noeud](2)at(2,0){};
            \draw[Arete](2)--(1);
            \node[Noeud](3)at(3,-1){};
            \draw[Arete](2)--(3);
        \end{tikzpicture}
    }} \\
    & -
    \PP_{\scalebox{0.15}{
        \begin{tikzpicture}
            \node[Noeud](0)at(0,-1){};
            \node[Noeud](1)at(1,-2){};
            \draw[Arete](0)--(1);
            \node[Noeud](2)at(2,0){};
            \draw[Arete](2)--(0);
            \node[Noeud](3)at(3,-1){};
            \draw[Arete](2)--(3);
        \end{tikzpicture}
        \begin{tikzpicture}
            \node[Noeud](0)at(0,-1){};
            \node[Noeud](1)at(1,0){};
            \draw[Arete](1)--(0);
            \node[Noeud](2)at(2,-2){};
            \node[Noeud](3)at(3,-1){};
            \draw[Arete](3)--(2);
            \draw[Arete](1)--(3);
        \end{tikzpicture}
    }}
    -
    \PP_{\scalebox{0.15}{
        \begin{tikzpicture}
            \node[Noeud](0)at(0,0){};
            \node[Noeud](1)at(1,-3){};
            \node[Noeud](2)at(2,-2){};
            \draw[Arete](2)--(1);
            \node[Noeud](3)at(3,-1){};
            \draw[Arete](3)--(2);
            \draw[Arete](0)--(3);
        \end{tikzpicture}
        \begin{tikzpicture}
            \node[Noeud](0)at(0,-1){};
            \node[Noeud](1)at(1,0){};
            \draw[Arete](1)--(0);
            \node[Noeud](2)at(2,-1){};
            \node[Noeud](3)at(3,-2){};
            \draw[Arete](2)--(3);
            \draw[Arete](1)--(2);
        \end{tikzpicture}
    }}
    -
    \PP_{\scalebox{0.15}{
        \begin{tikzpicture}
            \node[Noeud](0)at(0,0){};
            \node[Noeud](1)at(1,-2){};
            \node[Noeud](2)at(2,-1){};
            \draw[Arete](2)--(1);
            \node[Noeud](3)at(3,-2){};
            \draw[Arete](2)--(3);
            \draw[Arete](0)--(2);
        \end{tikzpicture}
        \begin{tikzpicture}
            \node[Noeud](0)at(0,-1){};
            \node[Noeud](1)at(1,0){};
            \draw[Arete](1)--(0);
            \node[Noeud](2)at(2,-2){};
            \node[Noeud](3)at(3,-1){};
            \draw[Arete](3)--(2);
            \draw[Arete](1)--(3);
        \end{tikzpicture}
    }}, \nonumber \\
    t_{4, 2} & =
    \PP_{\scalebox{0.15}{
        \begin{tikzpicture}
            \node[Noeud](0)at(0,-1){};
            \node[Noeud](1)at(1,0){};
            \draw[Arete](1)--(0);
            \node[Noeud](2)at(2,-2){};
            \node[Noeud](3)at(3,-1){};
            \draw[Arete](3)--(2);
            \draw[Arete](1)--(3);
        \end{tikzpicture}
        \begin{tikzpicture}
            \node[Noeud](0)at(0,0){};
            \node[Noeud](1)at(1,-2){};
            \node[Noeud](2)at(2,-1){};
            \draw[Arete](2)--(1);
            \node[Noeud](3)at(3,-2){};
            \draw[Arete](2)--(3);
            \draw[Arete](0)--(2);
        \end{tikzpicture}
    }}
    -
    \PP_{\scalebox{0.15}{
        \begin{tikzpicture}
            \node[Noeud](0)at(0,0){};
            \node[Noeud](1)at(1,-3){};
            \node[Noeud](2)at(2,-2){};
            \draw[Arete](2)--(1);
            \node[Noeud](3)at(3,-1){};
            \draw[Arete](3)--(2);
            \draw[Arete](0)--(3);
        \end{tikzpicture}
        \begin{tikzpicture}
            \node[Noeud](0)at(0,-1){};
            \node[Noeud](1)at(1,0){};
            \draw[Arete](1)--(0);
            \node[Noeud](2)at(2,-1){};
            \node[Noeud](3)at(3,-2){};
            \draw[Arete](2)--(3);
            \draw[Arete](1)--(2);
        \end{tikzpicture}
    }}, \\
    t_{4, 3} & =
    \PP_{\scalebox{0.15}{
        \begin{tikzpicture}
            \node[Noeud](0)at(0,-1){};
            \node[Noeud](1)at(1,0){};
            \draw[Arete](1)--(0);
            \node[Noeud](2)at(2,-1){};
            \node[Noeud](3)at(3,-2){};
            \draw[Arete](2)--(3);
            \draw[Arete](1)--(2);
        \end{tikzpicture}
        \begin{tikzpicture}
            \node[Noeud](0)at(0,0){};
            \node[Noeud](1)at(1,-3){};
            \node[Noeud](2)at(2,-2){};
            \draw[Arete](2)--(1);
            \node[Noeud](3)at(3,-1){};
            \draw[Arete](3)--(2);
            \draw[Arete](0)--(3);
        \end{tikzpicture}
    }}
    -
    \PP_{\scalebox{0.15}{
        \begin{tikzpicture}
            \node[Noeud](0)at(0,0){};
            \node[Noeud](1)at(1,-2){};
            \node[Noeud](2)at(2,-1){};
            \draw[Arete](2)--(1);
            \node[Noeud](3)at(3,-2){};
            \draw[Arete](2)--(3);
            \draw[Arete](0)--(2);
        \end{tikzpicture}
        \begin{tikzpicture}
            \node[Noeud](0)at(0,-1){};
            \node[Noeud](1)at(1,0){};
            \draw[Arete](1)--(0);
            \node[Noeud](2)at(2,-2){};
            \node[Noeud](3)at(3,-1){};
            \draw[Arete](3)--(2);
            \draw[Arete](1)--(3);
        \end{tikzpicture}
    }}, \\
    t_{4, 4} & =
    \PP_{\scalebox{0.15}{
        \begin{tikzpicture}
            \node[Noeud](0)at(0,-1){};
            \node[Noeud](1)at(1,0){};
            \draw[Arete](1)--(0);
            \node[Noeud](2)at(2,-2){};
            \node[Noeud](3)at(3,-1){};
            \draw[Arete](3)--(2);
            \draw[Arete](1)--(3);
        \end{tikzpicture}
        \begin{tikzpicture}
            \node[Noeud](0)at(0,-1){};
            \node[Noeud](1)at(1,-2){};
            \draw[Arete](0)--(1);
            \node[Noeud](2)at(2,0){};
            \draw[Arete](2)--(0);
            \node[Noeud](3)at(3,-1){};
            \draw[Arete](2)--(3);
        \end{tikzpicture}
    }}
    -
    \PP_{\scalebox{0.15}{
        \begin{tikzpicture}
            \node[Noeud](0)at(0,-1){};
            \node[Noeud](1)at(1,-2){};
            \draw[Arete](0)--(1);
            \node[Noeud](2)at(2,0){};
            \draw[Arete](2)--(0);
            \node[Noeud](3)at(3,-1){};
            \draw[Arete](2)--(3);
        \end{tikzpicture}
        \begin{tikzpicture}
            \node[Noeud](0)at(0,-1){};
            \node[Noeud](1)at(1,0){};
            \draw[Arete](1)--(0);
            \node[Noeud](2)at(2,-2){};
            \node[Noeud](3)at(3,-1){};
            \draw[Arete](3)--(2);
            \draw[Arete](1)--(3);
        \end{tikzpicture}
    }}.
\end{align}

$\Baxter$ is free as dendriform algebra on its totally primitive elements.

\bibliographystyle{plain}
\bibliography{Bibliographie}

\end{document}